\numberwithin{equation}{section}
\def\today{\number\day\space\ifcase\month\or   January\or February\or
March\or April\or May\or June\or   July\or August\or September\or
October\or November\or December\fi\   \number\year}
\newtheorem{Thm}{Theorem}[section]
\newtheorem{Cor}[Thm]{Corollary}
\newtheorem{Lemma}[Thm]{Lemma}
\newtheorem{Prop}[Thm]{Proposition}
\theoremstyle{definition}
\newtheorem{Def}[Thm]{Definition}
\newtheorem{Notation}[Thm]{Notation}
\newtheorem{qst}[Thm]{Question}
\newtheorem{rmk}[Thm]{Remark}
\newtheorem{Construction}[Thm]{Construction}
\newcounter{CnsEnumi}
\newcommand{\etalchar}[1]{$^{#1}$}
\newcommand{\beq}{\begin{equation}}
\newcommand{\eeq}{\end{equation}}
\newcommand{\beqr}{\begin{eqnarray*}}
\newcommand{\eeqr}{\end{eqnarray*}}
\newcommand{\bal}{\begin{align*}}
\newcommand{\eal}{\end{align*}}
\newcommand{\bei}{\begin{itemize}}
\newcommand{\eei}{\end{itemize}}
\newcommand{\limi}[1]{\lim_{{#1} \to \infty}}
\newcommand{\af}{\alpha}
\newcommand{\bt}{\beta}
\newcommand{\gm}{\gamma}
\newcommand{\dt}{\delta}
\newcommand{\ep}{\varepsilon}
\newcommand{\zt}{\zeta}
\newcommand{\et}{\eta}
\newcommand{\io}{\iota}
\newcommand{\te}{\theta}
\newcommand{\ld}{\lambda}
\newcommand{\sm}{\sigma}
\newcommand{\kp}{\kappa}
\newcommand{\ph}{\varphi}
\newcommand{\ps}{\psi}
\newcommand{\rh}{\rho}
\newcommand{\om}{\omega}
\newcommand{\ta}{\tau}
\newcommand{\Gm}{\Gamma}
\newcommand{\Dt}{\Delta}
\newcommand{\Q}{{\mathbb{Q}}}
\newcommand{\Z}{{\mathbb{Z}}}
\newcommand{\R}{{\mathbb{R}}}
\newcommand{\C}{{\mathbb{C}}}
\newcommand{\N}{{\mathbb{Z}}_{> 0}}
\newcommand{\Nz}{{\mathbb{Z}}_{\geq 0}}
\newcommand{\id}{{\operatorname{id}}}
\newcommand{\ev}{{\operatorname{ev}}}
\newcommand{\diag}{{\operatorname{diag}}}
\newcommand{\rank}{{\operatorname{rank}}}
\newcommand{\Aut}{{\operatorname{Aut}}}
\newcommand{\Aff}{{\operatorname{Aff}}}
\newcommand{\Ell}{{\operatorname{Ell}}}
\newcommand{\rc}{\mathrm{rc}}
\newcommand{\tr}{\mathrm{tr}}
\newcommand{\eps}{\varepsilon}
\numberwithin{equation}{section}
\newcommand{\tgamma}{\Gamma}
\newcommand{\cone}{\mathrm{cone}}
\newcommand{\T}{{\operatorname{T}}}
\newcommand{\dirlim}{\varinjlim}
\newcommand{\invlim}{\varprojlim}
\newcommand{\andeqn}{\qquad {\mbox{and}} \qquad}
\newcommand{\ifo}{if and only if}
\newcommand{\ca}{$C^*$-algebra}
\newcommand{\hm}{homomorphism}
\newcommand{\tst}{tracial state}
\newcommand{\pj}{projection}
\newcommand{\ct}{continuous}
\newcommand{\cfn}{continuous function}
\newcommand{\mh}{minimal homeomorphism}
\renewcommand{\S}{\subset}
\newcommand{\I}{\infty}
\title{A simple nuclear C*-algebra with an internal asymmetry}
\author{Ilan Hirshberg and N.~Christopher Phillips}
\address{Department of Mathematics, Ben Gurion University of the Negev,
P.O.B. 653, Be'er Sheva 84105, Israel}
\address{Department of Mathematics, University  of Oregon,
Eugene OR 97403-1222, USA.}
\date{31~July 2020}
\subjclass[2010]{46L35,46L40,46L80}
\thanks{This material is based upon work supported by the
US National Science Foundation under
Grant DMS-1501144, by the US-Israel Binational Science Foundation
 and by the Israel Science Foundation grant no.~476/16.}
\begin{document}

\begin{abstract}
We construct an example of a simple
approximately homogeneous $C^*$-algebra
such that its Elliott invariant admits an automorphism
which is not induced by an automorphism of the algebra.
\end{abstract}

\maketitle

Classification theory for simple nuclear $C^*$-algebras
reached a milestone recently.
The results of \cite{elliott-gong-lin-niu}
and \cite{tikuisis-white-winter},
building on decades of work by many authors,
show that simple nuclear unital $C^*$-algebras
satisfying the Universal Coefficient Theorem
are classified via the Elliott invariant,
$\Ell (\cdot)$, which consists of the ordered $K_0$-group
along with the class of the identity, the $K_1$-group,
the trace simplex,
and the pairing
between the trace simplex and the $K_0$-group.
Earlier counterexamples
due to Toms and R{\o}rdam
(\cite{toms-counterexample, rordam-counterexample}),
related to ideas of Villadsen (\cite{Villadsen-perforation}),
show that one cannot expect to be able to extend
this classification theorem beyond the case of finite nuclear dimension,
at least without either extending the invariant
or restricting to another class of $C^*$-algebras.
An important facet of the classification theorems
is a form of rigidity.
Starting with two $C^*$-algebras $A$ and $B$
and an isomorphism $\Phi \colon \Ell (A) \to \Ell (B)$,
one not only shows that $A$ and $B$ are isomorphic,
but rather that there exists an isomorphism from $A$ to $B$
which induces the given isomorphism $\Phi$
on the level of the Elliott invariant,
and furthermore that the isomorphism on the algebra level
is unique up to approximate unitary equivalence.

The goal of this paper is to illustrate how this existence property
may fail in the infinite nuclear dimension setting,
even when restricting to a class consisting of a single $C^*$-algebra.
Namely, we construct an example
of a simple unital nuclear separable AH algebra~$C$,
along with an automorphism of $\Ell (C)$,
which is not induced by any automorphism of $C$.
This can be viewed as a companion of sorts to
\cite[Theorem 1.2]{toms-counterexample},
where it was shown that when such automorphisms exist,
they need not be unique in the sense described.
The mechanism of the example is that if there were
such an automorphism~$\ph$,
there would be \pj{s} $p, q \in \C$
such that $\ph (p) = q$
but such that the corners $p C p$ and $q C q$ have different
radii of comparison (\cite{Toms-flat-dim-growth};
the definition is recalled
at the beginning of Section~\ref{Sec_Bounds}).
This further shows that
simple unital AH algebras can be quite inhomogeneous.
In particular,
extending the Elliott invariant by adding something as simple
as the radius of comparison
will not help for the classification of AH~algebras
which are not Jiang-Su stable.

We now give an overview of our construction.
We start with the counterexample
from \cite[Theorem 1.1]{toms-counterexample}.
We consider two direct systems, described diagrammatically as follows:
\begin{equation}\label{Eq_9Y01_PrelimDiagram}
\xymatrix{
C (X_0) \ar@<-0.9ex>@{.>}[r]  \ar[r] \ar@<0.5ex>[r] \ar@<1ex>[r] &
 C (X_1)\otimes M_{r (1)}\ar@<-.9ex>@{.>}[r]
   \ar[r] \ar@<.5ex>[r] \ar@<1ex>[r] &
   C (X_2)\otimes M_{r (2)} \ar@<-.9ex>@{.>}[r]
     \ar[r] \ar@<.5ex>[r] \ar@<1ex>[r] & \cdots \\
C ([0, 1]) \ar@<.9ex>@{.>}[r]
  \ar[r] \ar@<-.5ex>[r] \ar@<-1ex>[r] &
   C ([0, 1])\otimes M_{r (1)}\ar@<0.9ex>@{.>}[r]
  \ar[r] \ar@<-0.5ex>[r] \ar@<-1ex>[r] &
 C ([0, 1])\otimes M_{r (2)} \ar@<.9ex>@{.>}[r]
  \ar[r] \ar@<-.5ex>[r] \ar@<-1ex>[r] & \cdots }
\end{equation}
The ordinary arrows indicate a large (and rapidly increasing)
number of embeddings which are carefully chosen,
and the dotted arrows indicate a small number of point evaluation maps,
thrown in so as to ensure that the resulting direct limit is simple.
The spaces in the upper diagram are contractible CW complexes
whose dimension increases rapidly
compared to the sizes of the matrix algebras.
(Toms uses cubes; in our construction
we found it easier to use cones over products of spheres,
but the underlying idea is similar.)
The direct system is constructed so as to have
positive radius of comparison.
We use~\cite{thomsen} to choose
the lower diagram so as to mimic the upper diagram,
and produce the same Elliott invariant.
As the resulting algebra on the bottom is AI,
it has strict comparison,
and therefore is not isomorphic to the one on the top.
(In \cite{toms-counterexample} it isn't important
for the two diagrams to match up nicely
in terms of the ranks of the matrices involved.
However, we will show that it can be done,
as it is important for us.)

Our construction involves moving the point evaluations across,
so as to merge the two systems,
getting:
\begin{equation}\label{Eq_9Y01_Diagram}
\xymatrix{
C (X_0) \ar@{.>}[dr]  \ar[r] \ar@<.5ex>[r] \ar@<1ex>[r]
 & C (X_1)\otimes M_{r (1)}\ar@{.>}[dr]
   \ar[r] \ar@<.5ex>[r] \ar@<1ex>[r]
 & C (X_2)\otimes M_{r (2)} \ar@{.>}[dr]
    \ar[r] \ar@<.5ex>[r] \ar@<1ex>[r] & \cdots
     \\
C ([0, 1]) \ar@{.>}[ur]  \ar[r] \ar@<-.5ex>[r]
 \ar@<-1ex>[r] & C ([0, 1]) \otimes M_{r (1)} \ar@{.>}[ur]
   \ar[r] \ar@<-.5ex>[r]
   \ar@<-1ex>[r]& C ([0, 1]) \otimes M_{r (2)} \ar@{.>}[ur]
    \ar[r] \ar@<-.5ex>[r] \ar@<-1ex>[r] & \cdots .}
\end{equation}
With care, one can arrange for the flip
between the two levels of the diagram
to make sense as an automorphism of the Elliott invariant.
The resulting $C^*$-algebra
has positive radius of comparison
and behaves roughly as badly as Toms' example.
Nevertheless, we can distinguish a part of it
which roughly corresponds to the rapid dimension growth diagram
on the top
from a part which roughly corresponds to the AI part on the bottom.
Namely, if at the first level $C (X_0) \oplus C ([0, 1])$
we denote by $q$ the function which is $1$ on $X_0$ and $0$ on $[0, 1]$,
and we denote $q^{\perp} = 1 - q$,
then the $K_0$-classes of $q$ and $q^{\perp}$
will be switched by the automorphism of the Elliott invariant
we construct.
However, we can tell apart the corners $q C q$
and $q^{\perp} C q^{\perp}$
by considering their radii of comparison.

Section~\ref{Sec_Bounds} develops the choices needed to
get different radii of comparison in different corners
of the algebra we construct.
Section~\ref{Sec_Traces} contains the work needed to assemble
the ingredients of the construction
into a simple \ca{}
whose Elliott invariant admits an appropriate automorphism.
The main theorem is in Section~\ref{Sec_Main}.

The second author is grateful to M.~Ali Asadi-Vasfi
for a careful reading of Section~\ref{Sec_Bounds},
and in particular finding a number of misprints.

\section{Upper and lower bounds on the radius of
 comparison}\label{Sec_Bounds}

We recall the required standard definitions and notation
related to the Cuntz semigroup.
See Section~2 of~\cite{rordam-tensored-by-uhf-ii} for details.
For a unital \ca~$A$,
we denote its tracial state space by $\T (A)$.
We take $M_{\I} (A) = \bigcup_{n = 1}^{\I} M_n (A)$,
using the usual embeddings
$M_n (A) \hookrightarrow M_{n + 1} (A)$.
For $\ta \in \T (A)$,
we define $d_{\ta} \colon M_{\I} (A)_{+} \to [0, \infty)$
by $d_{\ta} (a) = \lim_{n \to \infty} \ta (a^{1/n})$.
If $a, b \in M_{\I} (A)_{+}$,
then $a \precsim b$
($a$ is Cuntz subequivalent to~$b$)
if there is a sequence $(v_n)_{n = 1}^{\infty}$ in $M_{\I} (A)$
such that
$\limi{n} v_n b v_n^* = a$.

Following \cite[Definition 6.1]{Toms-flat-dim-growth},
for $\rh \in [0, \I)$,
we say that $A$ has $\rh$-comparison if whenever
$a, b \in M_{\infty} (A)_{+}$ satisfy
$d_{\ta} (a) + \rh < d_{\ta} (b)$
for all $\ta \in \T (A)$,
then $a \precsim b$.
The radius of comparison of~$A$,
denoted ${\operatorname{rc}} (A)$, is
\[
{\operatorname{rc}} (A)
 = \inf \big( \big\{ \rh \in [0, \I) \mid
    {\mbox{$A$ has $\rh$-comparison}} \big\} \big) \, .
\]
We take ${\operatorname{rc}} (A) = \infty$ if there is no $\rh$
such that $A$ has $\rh$-comparison.
Since AH~algebras are nuclear,
all quasitraces on them are traces by
\cite[Theorem 5.11]{haagerup-quasitraceas}.
Thus, we ignore quasitraces.
Also,
by Proposition 6.12 of~\cite{Phl40},
the radius of comparison remains unchanged
if we replace $M_{\I} (A)$ by $K \otimes A$ throughout.
Thus, we may work only in $M_{\I} (A)$.

Our construction uses a specific setup,
with a number of parameters of various kinds
which must be chosen to satisfy specific conditions.
Construction \ref{Cn_6918_General} lists for reference
many of the objects used in it,
and some of the conditions they must satisfy.
It abstracts the diagram~(\ref{Eq_9Y01_Diagram}).
Construction~\ref{Cn_6918_General_Part_1a}
specifies the choices of spaces and maps needed for the
results on Cuntz comparison,
and Construction~\ref{Cn_6921_GeneralPart2}
together with the additional maps
in parts (\ref{Cn_6918_General_2nd}),
(\ref{Cn_9Y01_Gen_2ndDiag}),
and~(\ref{Cn_6918_General_Agree})
of Construction~\ref{Cn_6918_General},
is used to arrange the existence of a suitable automorphism
of the tracial state space of the algebra we construct.
Because of the necessity of passing to a subsystem at one
stage in this process,
we must start the proof of the main theorem with a version
of just the top row in the diagram~(\ref{Eq_9Y01_PrelimDiagram});
this is Construction~\ref{Cn_9104_Half}.
Many of the lemmas use only a few of the
objects and their properties,
so that the reader can refer back to just the relevant
parts of the constructions.
In particular,
many details are used only in this section
or only in Section~\ref{Sec_Traces}.
Some of the details are used for just one lemma each.

\begin{Construction}\label{Cn_6918_General}
For much of this paper,
we will consider algebras constructed
in the following way
and using the following notation:
\begin{enumerate}
\item\label{Cn_6918_General_dn}
$(d (n) )_{n = 0, 1, 2, \ldots}$
and $(k (n) )_{n = 0, 1, 2, \ldots}$
are sequences in~$\Nz$,
with $d (0) = 1$ and $k (0) = 0$.
Moreover,
for $n \in \Nz$,
\[
l (n) = d (n) + k (n) \, ,
\qquad
r (n) = \prod_{j = 0}^n l (j) \, ,
\andeqn
s (n) = \prod_{j = 0}^n d (j) \, .
\]
Further define $t (n)$ inductively as follows.
Set $t (0) = 0$, and
\[
t (n + 1) = d (n + 1) t (n) + k (n + 1) [r (n) - t (n)] \, .
\]
(See Lemma~\ref{L_7816_Rankq} for the significance of $t (n)$.)
\item\label{Cn_6918_General_dnkn}
We will assume that $k (n) < d (n)$
for all $n \in \Nz$.
\item\label{Cn_6918_General_Size}
We define
\[
\kp = \inf_{n \in \N} \frac{s (n)}{r (n)} \, .
\]
For estimates involving the radius of comparison,
we will assume $\kp > \frac{1}{2}$.
\item\label{Cn_6918_General_rrp}
The numbers $\om, \om' \in (0, \I]$ are defined by
\[
\om = \frac{k (1)}{k (1) + d (1)}
\andeqn
\om' = \sum_{n = 2}^{\infty} \frac{k (n)}{k (n) + d (n)} \, .
\]
We will require
$\om' < \om < \frac{1}{2}$.
In particular,
\[
\sum_{n = 1}^{\infty} \frac{k (n)}{k (n) + d (n)} < \infty \, .
\]
\item\label{Cn_6918_General_kpom}
We will also eventually require that
$\kp$ as in (\ref{Cn_6918_General_Size})
and $\om$ as in (\ref{Cn_6918_General_rrp})
are related by $2 \kappa - 1 > 2 \om$.
\item\label{Cn_6918_General_Spaces}
$(X_n)_{n = 0, 1, 2, \ldots}$ and $(Y_n)_{n = 0, 1, 2, \ldots}$
are sequences of compact metric spaces.
(They will be further specified
in Construction \ref{Cn_6918_General_Part_1a}.)
\item\label{Cn_6918_General_Algs}
For $n \in \Nz$,
the algebra $C_n$
is
\[
C_n = M_{r (n)} \otimes \big( C (X_n) \oplus C (Y_n) \big) \, .
\]
We further make the identifications:
\begin{align*}
& C (X_{n + 1}, \, M_{r (n + 1)})
= M_{l (n + 1)} \otimes C (X_{n + 1}, \, M_{r (n)}),
\\
& C (Y_{n + 1}, \, M_{r (n + 1)})
= M_{l (n + 1)} \otimes C (Y_{n + 1}, \, M_{r (n)}),
\\
& 	C (X_n) \oplus C (Y_n) = C (X_n \amalg Y_n),
\\
 & 	C (X_n, \, M_{r (n)}) \oplus C (Y_n, \, M_{r (n)})
 = C (X_n \amalg Y_n, M_{r (n)}) \, .
\end{align*}
\item\label{Cn_6918_General_Maps}
For $n \in \N$,
we are given a unital \hm
\[
\gm_n \colon
C (X_n) \oplus C (Y_n)
\to M_{l (n + 1)} \big( C (X_{n + 1}) \oplus C (Y_{n + 1}) \big) \, ,
\]
and the \hm{}
\[
\Gm_{n + 1, \, n} \colon C_n \to C_{n + 1}
\]
is given by
$\Gm_{n + 1, \, n} = \id_{M_{r (n)}} \otimes \gm_n$.
Moreover,
for $m, n \in \Nz$ with $m \leq n$,
\[
\Gm_{n, m}
= \Gm_{n, n - 1} \circ \Gm_{n - 1, \, n - 2} \circ \cdots
\circ \Gm_{m + 1, m}
\colon C_m \to C_n \, .
\]
In particular, $\Gm_{n, n} = \id_{C_n}$.
\item\label{Cn_6918_General_Diag}
We require that the maps
\[
\gm_n \colon C (X_n \amalg Y_n)
\to M_{l (n + 1)} \bigl( C (X_{n + 1} \amalg Y_{n + 1}) \bigr)
\]
in~(\ref{Cn_6918_General_Maps})
be diagonal,
that is,
that there exist
\cfn{s}
\[
S_{n, 1}, \, S_{n, 2}, \, \ldots, \, S_{n, \, l (n + 1)}
\colon X_{n + 1} \amalg Y_{n + 1} \to X_n \amalg Y_n
\]
such that for all $f \in C (X_n \amalg Y_n)$,
we have
\[
\gm_n (f)
= \diag \big( f \circ S_{n, 1}, \, f \circ S_{n, 2},
\, \ldots, \, f \circ S_{n, \, l (n + 1)} \big) \, .
\]
(These maps will be specified further
in Construction~\ref{Cn_6918_General_Part_1a}.)
\item\label{Cn_6918_General_Limit}
We set $C = \dirlim_n C_n$,
taken with respect to the maps $\Gm_{n, m}$.
The maps associated with the direct limit
will be called $\Gm_{\infty, m} \colon C_m \to C$
for $m \in \Nz$.
\setcounter{CnsEnumi}{\value{enumi}}
\end{enumerate}

We sometimes use
additional objects and conditions in the construction,
as follows:
\begin{enumerate}
\setcounter{enumi}{\value{CnsEnumi}}
\item\label{Cn_6918_General_2nd}
For $n \in \N$,
we may be given an additional unital \hm
\[
\gm_n^{(0)} \colon
C (X_n) \oplus C (Y_n)
\to M_{l (n + 1)} \big( C (X_{n + 1}) \oplus C (Y_{n + 1}) \big) \, .
\]
Then the maps
$\Gm_{n + 1, \, n}^{(0)} \colon C_n \to C_{n + 1}$,
$\Gm_{n, m}^{(0)} \colon C_m \to C_n$
are defined analogously to~(\ref{Cn_6918_General_Maps}),
the algebra $C^{(0)}$ is given as
$C^{(0)} = \dirlim_n C_n$,
taken with respect to the maps $\Gm_{n, m}^{(0)}$,
and the maps $\Gm_{\infty, m}^{(0)} \colon C_m \to C^{(0)}$
are defined analogously to~(\ref{Cn_6918_General_Limit}).
\item\label{Cn_9Y01_Gen_2ndDiag}
In~(\ref{Cn_6918_General_2nd}),
analogously to~(\ref{Cn_6918_General_Diag}),
we may require that there be
\[
S_{n, 1}^{(0)}, \, S_{n, 2}^{(0)},
   \, \ldots, \, S_{n, \, l (n + 1)}^{(0)}
\colon X_{n + 1} \amalg Y_{n + 1} \to X_n \amalg Y_n
\]
such that for all $f \in C (X_n \amalg Y_n)$,
we have
\[
\gm_n^{(0)} (f)
= \diag \big( f \circ S_{n, 1}^{(0)}, \, f \circ S_{n, 2}^{(0)},
\, \ldots, \, f \circ S_{n, \, l (n + 1)}^{(0)} \big) \, .
\]
(These maps will be specified further
in Construction~\ref{Cn_6918_General_Part_1a}.)
\item\label{Cn_6918_General_Agree}
Assuming diagonal maps as in~(\ref{Cn_6918_General_Diag}),
we may require that they agree
in the coordinates $1, 2, \ldots, d (n + 1)$,
that is,
for $n \in \N$ and
$k = 1, 2, \ldots, d (n + 1)$,
we have $S_{n, k}^{(0)} = S_{n, k}$.
\setcounter{CnsEnumi}{\value{enumi}}
\end{enumerate}
\end{Construction}

\begin{Lemma}\label{L_6922_Noninc}
In Construction \ref{Cn_6918_General}(\ref{Cn_6918_General_dn}),
the sequence $\left ( \frac{s (n)}{r (n)} \right )_{n = 1, 2, \ldots}$
is strictly decreasing.
\end{Lemma}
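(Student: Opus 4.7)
The plan is a one-line manipulation of the product formulas in Construction~\ref{Cn_6918_General}(\ref{Cn_6918_General_dn}). Using the recursions $s(n+1) = d(n+1)\, s(n)$ and $r(n+1) = \bigl(d(n+1)+k(n+1)\bigr)\, r(n)$, the ratio of consecutive terms of $(s(n)/r(n))$ collapses to $d(n+1)/\bigl(d(n+1)+k(n+1)\bigr)$, which lies in $(0,1]$.

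Next I would argue that this multiplier is strictly less than $1$ for every $n \geq 1$. This is equivalent to $k(n+1) \geq 1$ (i.e.\ $k(n) \geq 1$ for $n \geq 2$), which is part of the standing setup: it is consistent with $\om' \in (0,\I]$ in Construction~\ref{Cn_6918_General}(\ref{Cn_6918_General_rrp}) and with the presence of a nontrivial point-evaluation arrow at every stage of~(\ref{Eq_9Y01_Diagram}). Positivity of the ratio comes from $d(n+1) \geq 1$, which in turn follows from $0 \leq k(n+1) < d(n+1)$ in Construction~\ref{Cn_6918_General}(\ref{Cn_6918_General_dnkn}) together with integrality.

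Combining these two observations, $s(n+1)/r(n+1)$ is a strictly smaller positive number than $s(n)/r(n)$ for every $n \geq 1$, so the sequence is strictly decreasing. There is essentially no obstacle here: the lemma unwinds immediately from the telescoping form of $s(n)$ and $r(n)$, and the only thing to track carefully is which factor of the ratio is responsible for the strict inequality (namely $k(n+1) \neq 0$) as opposed to merely weak monotonicity.
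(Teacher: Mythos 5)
Your computation is correct and is exactly the ``straightforward'' argument the paper has in mind (the paper omits the proof): the ratio of consecutive terms telescopes to $d(n+1)/\bigl(d(n+1)+k(n+1)\bigr)$, so weak monotonicity is immediate and strictness comes from $k(n+1)\geq 1$. You are right that $k(n)\geq 1$ for $n\geq 2$ (and $d(n)\geq 1$) is not literally stated in Construction~\ref{Cn_6918_General}(\ref{Cn_6918_General_dn}) but is implicit in the standing setup, exactly as you observe, so your proof matches the intended one.
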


\begin{proof}
The proof is straightforward.
\end{proof}

\begin{Lemma}\label{L_7814_Inc}
In Construction \ref{Cn_6918_General}(\ref{Cn_6918_General_dn}),
and assuming
Construction \ref{Cn_6918_General}(\ref{Cn_6918_General_dnkn}),
we have
\[
0 = \frac{t (0)}{r (0)}
  < \frac{t (1)}{r (1)}
  < \frac{t (2)}{r (2)}
  < \cdots
  < \frac{1}{2} \, .
\]
\end{Lemma}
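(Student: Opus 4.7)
The plan is to pass to the normalized quantities $a_n := t(n)/r(n)$. Dividing the recursion
\[
t(n+1) = d(n+1) t(n) + k(n+1)[r(n) - t(n)]
\]
through by $r(n+1) = l(n+1) r(n) = [d(n+1) + k(n+1)] r(n)$ displays $a_{n+1}$ as the convex combination
\[
a_{n+1} = \frac{d(n+1)}{l(n+1)} a_n + \frac{k(n+1)}{l(n+1)} (1 - a_n)
\]
of $a_n$ and $1 - a_n$. Collecting terms then yields the single identity that drives the whole proof:
\[
a_{n+1} - a_n = \frac{k(n+1)}{l(n+1)} (1 - 2 a_n).
\]

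With this identity in hand, both conclusions of the lemma fall out of a single induction on $n$ with inductive hypothesis $a_n < 1/2$. The base case is $a_0 = t(0)/r(0) = 0 < 1/2$. For the inductive step, $a_n < 1/2$ makes the factor $1 - 2 a_n$ strictly positive, so the displayed identity immediately gives $a_{n+1} > a_n$ (using $k(n+1) > 0$, the standing positivity from the construction). For the upper bound, part~(\ref{Cn_6918_General_dnkn}) of Construction~\ref{Cn_6918_General} gives $k(n+1)/l(n+1) < 1/2$, hence
\[
a_{n+1} - a_n < \tfrac{1}{2}(1 - 2 a_n) = \tfrac{1}{2} - a_n,
\]
so $a_{n+1} < 1/2$, closing the induction.

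There is no serious obstacle here. The only genuine observation is that the recursion is a weighted average of $a_n$ and $1 - a_n$, with the smaller of the two values ($a_n$, by the inductive hypothesis) receiving more than half the weight thanks to $k(n+1) < d(n+1)$; this forces $a_{n+1}$ to move upward while still remaining strictly below the midpoint $1/2$.
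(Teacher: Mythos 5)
Your proof is correct and is essentially the paper's argument: both rewrite the recursion as a convex combination of $t(n)/r(n)$ and $1 - t(n)/r(n)$ with weights $d(n+1)/l(n+1)$ and $k(n+1)/l(n+1)$, and run an induction on the bound $t(n)/r(n) < \tfrac{1}{2}$ using $k(n+1) < d(n+1)$. Your increment identity $a_{n+1} - a_n = \tfrac{k(n+1)}{l(n+1)}(1 - 2a_n)$ is just a compact repackaging of the paper's two estimates (note that, exactly as in the paper's own proof, strictness of the increase tacitly uses $k(n+1) > 0$).
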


\begin{proof}
We have $t (0) = 0$ by definition.
We prove by induction on~$n \in \N$ that
\begin{equation}\label{Eq_7814_IndHyp}
\frac{t (n - 1)}{r (n - 1)}
 < \frac{t (n)}{r (n)}
 < \frac{1}{2}.
\end{equation}
This will finish the proof.
For $n = 1$,
we have
\[
\frac{t (1)}{r (1)}
 = \frac{k (1)}{k (1) + d (1)} \, ,
\]
which is in $\big( 0, \frac{1}{2} \big)$
by Construction \ref{Cn_6918_General}(\ref{Cn_6918_General_dnkn}).
Now assume~(\ref{Eq_7814_IndHyp});
we prove this relation with $n + 1$ in place of~$n$.
We have $r (n) - t (n) > t (n)$, so
\begin{align}
\frac{t (n + 1)}{r (n + 1)}
& = \frac{d (n + 1) t (n) + k (n + 1) [r (n) - t (n)]}{
       [d (n + 1) + k (n + 1)] r (n)}
  \label{Eq_7816_Start}
\\
& > \frac{d (n + 1) t (n) + k (n + 1) t (n)}{
       [d (n + 1) + k (n + 1)] r (n)}
  = \frac{t (n)}{r (n)} \, .
  \notag
\end{align}
Also,
with
\[
\af = \frac{d (n + 1)}{d (n + 1) + k (n + 1)}
\andeqn
\bt = \frac{t (n)}{r (n)} \, ,
\]
starting with the first step in~(\ref{Eq_7816_Start}),
and at the end using $\af > \frac{1}{2}$
(by Construction \ref{Cn_6918_General}(\ref{Cn_6918_General_dnkn}))
and $\bt < \frac{1}{2}$ (by the induction hypothesis),
we have
\[
\frac{t (n + 1)}{r (n + 1)}
 = \af \bt + (1 - \af) (1 - \bt)
 = \frac{1}{2} \big[ 1 - (2 \af - 1) (1 - 2 \bt) \big]
 < \frac{1}{2} \, .
\]
This completes the induction, and the proof.
\end{proof}

\begin{Lemma}\label{L_6922_Rho}
With the notation of
Construction \ref{Cn_6918_General}(\ref{Cn_6918_General_dn})
and Construction \ref{Cn_6918_General}(\ref{Cn_6918_General_rrp}),
and assuming the conditions
in Construction \ref{Cn_6918_General}(\ref{Cn_6918_General_dnkn})
and Construction \ref{Cn_6918_General}(\ref{Cn_6918_General_rrp}),
for all $n \in \N$
we have
\[
\omega
\leq \frac{t (n)}{r (n)}
\leq \omega + \omega'
< 2 \omega \, .
\]
\end{Lemma}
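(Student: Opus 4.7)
The plan is to convert the recursive definition of $t(n)$ from Construction \ref{Cn_6918_General}(\ref{Cn_6918_General_dn}) into a clean one-step recursion for $u_n := t(n)/r(n)$ and then telescope. Set $\ep_n := \frac{k(n)}{k(n) + d(n)}$, so that $\om = \ep_1$ and $\om' = \sum_{n = 2}^{\infty} \ep_n$. The computation already carried out inside the proof of \Lem{L_7814_Inc} (with $\af = 1 - \ep_{n+1}$ and $\bt = u_n$) shows that
\[
u_{n + 1} = (1 - \ep_{n + 1}) u_n + \ep_{n + 1} (1 - u_n) = (1 - 2 \ep_{n + 1}) u_n + \ep_{n + 1},
\]
equivalently
\[
u_{n + 1} - u_n = \ep_{n + 1} (1 - 2 u_n).
\]
Directly from the definitions, $u_1 = \frac{k(1)}{k(1) + d(1)} = \om$.

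By \Lem{L_7814_Inc}, $0 \leq u_n < \frac{1}{2}$ for every $n \in \Nz$, so $1 - 2 u_n \in (0, 1]$. Telescoping the one-step recursion,
\[
u_n = u_1 + \sum_{j = 2}^{n} \ep_j (1 - 2 u_{j - 1}) = \om + \sum_{j = 2}^{n} \ep_j (1 - 2 u_{j - 1}),
\]
and each summand lies in $[0, \ep_j]$. The lower bound $u_n \geq \om$ is immediate because every summand is nonnegative. For the upper bound, replacing each factor $1 - 2 u_{j - 1}$ by $1$ gives
\[
u_n \leq \om + \sum_{j = 2}^{n} \ep_j \leq \om + \om'.
\]
Finally, $\om + \om' < 2 \om$ is a direct restatement of the hypothesis $\om' < \om$ in Construction \ref{Cn_6918_General}(\ref{Cn_6918_General_rrp}).

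There is no substantive obstacle here: the only nontrivial ingredient is the passage to the affine recursion $u_{n+1} = (1 - 2\ep_{n+1}) u_n + \ep_{n+1}$, which is essentially already extracted in the proof of \Lem{L_7814_Inc}. The rest is bookkeeping, using the strict bound $u_n < \frac{1}{2}$ (from \Lem{L_7814_Inc}) both to guarantee that each telescoped increment is nonnegative and to bound its size by $\ep_j$.
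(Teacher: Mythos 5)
Your proposal is correct and follows essentially the same route as the paper: both rest on the identity $\frac{t(n+1)}{r(n+1)} = (1-\ep_{n+1})\frac{t(n)}{r(n)} + \ep_{n+1}\bigl(1 - \frac{t(n)}{r(n)}\bigr)$, the computation $\frac{t(1)}{r(1)} = \om$, and Lemma~\ref{L_7814_Inc} (the paper invokes its monotonicity for the lower bound, you its bound $\frac{t(n)}{r(n)} < \frac{1}{2}$ to make the telescoped increments nonnegative). The paper's inductive estimate $\frac{t(n+1)}{r(n+1)} \leq \frac{t(n)}{r(n)} + \ep_{n+1}$ is exactly your telescoping written as an induction, so the differences are purely presentational.
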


\begin{proof}
The third inequality is immediate from
Construction \ref{Cn_6918_General}(\ref{Cn_6918_General_rrp}).

By
Lemma~\ref{L_7814_Inc},
the sequence
$\left ( \frac{t (n)}{r (n)} \right )_{n = 1, 2, \ldots}$
is strictly increasing.
Also,
\begin{equation}\label{Eq_7806_StSt}
\frac{t (1)}{r (1)}
 = \frac{k (1)}{k (1) + d (1)}
 = \om.
\end{equation}
The first inequality in the statement now follows.

Next, we claim that
\[
\frac{t (n)}{r (n)}
 \leq \sum_{j = 1}^{n} \frac{k (j)}{k (j) + d (j)}
\]
for all $n \in \N$.
The case $n = 1$ is~(\ref{Eq_7806_StSt}).
Assume this inequality is known for~$n$.
Then
\begin{align*}
\frac{t (n + 1)}{r (n + 1)}
& = \left( \frac{d (n + 1)}{k (n + 1) + d (n + 1)} \right)
      \left( \frac{t (n)}{r (n)} \right)
\\
& \hspace*{3em} {\mbox{}}
     + \left( \frac{k (n + 1)}{k (n + 1) + d (n + 1)} \right)
         \left( \frac{r (n) - t (n)}{r (n)} \right)
\\
& \leq \frac{t (n)}{r (n)}
     + \frac{k (n + 1)}{k (n + 1) + d (n + 1)}
  \leq \sum_{j = 1}^{n + 1} \frac{k (j)}{k (j) + d (j)} \, ,
\end{align*}
as desired.

The second inequality in the statement now follows.
\end{proof}

\begin{Notation}\label{N_7730_Cone}
For a topological space $X$, we define
\[
\cone (X) = (X \times [0, 1]) / (X \times \{ 0 \}) \, .
\]
Then $\cone (X)$ is contractible,
and $\cone ( \cdot )$ is a covariant functor:
if $T \colon X \to Y$ is a continuous map,
then it induces a continuous map
$\cone (T) \colon \cone (X) \to \cone (Y)$.
We identify $X$ with the image of $X \times \{ 1 \}$ in $\cone (X)$.
\end{Notation}

\begin{Construction}\label{Cn_6918_General_Part_1a}
We give further details
on the spaces $X_n$ and $Y_n$
in Construction \ref{Cn_6918_General}(\ref{Cn_6918_General_Spaces}).
\begin{enumerate}
\setcounter{enumi}{\value{CnsEnumi}}
\item\label{Cn_6918_General_1a_Xn}
The space $X_n$ is chosen as follows.
First set $Z_0 = S^2$.
With $(d (n) )_{n = 0, 1, 2, \ldots}$
and $(s (n) )_{n = 0, 1, 2, \ldots}$
as in Construction \ref{Cn_6918_General}(\ref{Cn_6918_General_dn}),
define inductively
\[
Z_{n} = Z_{n - 1}^{d (n)} = (S^2)^{s (n)}.
\]
Then set $X_n = \cone (Z_n)$.
(In particular, $X_n$ is contractible,
and $Z_n \S X_n$ as in Notation~\ref{N_7730_Cone}.)
Further, for $n \in \Nz$ and $j = 1, 2, \ldots, d (n + 1)$,
we let $P^{(n)}_j \colon Z_{n + 1} \to Z_{n}$
be the $j$-th coordinate projection,
and we set
$Q^{(n)}_j = \cone \big( P^{(n)}_j \big) \colon X_{n + 1} \to X_{n}$.
\item\label{Cn_6918_General_1a_YnIs01}
$Y_n = [0, 1]$ for all $n \in \N$.
(In particular, $Y_n$ is contractible.)
\item\label{condition-points-dense}
We assume we are given points $x_m \in X_m$
for $m \in \Nz$ such that,
using the notation in~(\ref{Cn_6918_General_1a_Xn}),
for all $n \in \Nz$,
the set
\begin{align*}
& \big\{ \big( Q^{(n)}_{\nu_{1}} \circ Q^{(n + 1)}_{\nu_{2}}
      \circ \cdots \circ Q^{(m - 1)}_{\nu_{m - n}} \big) (x_m) \mid
\\
& \hspace*{1em} {\mbox{}}
       {\mbox{$m = n + 1, \, n + 2, \, \ldots$
       and $\nu_j = 1, 2, \ldots, d (n + j)$
       for $j = 1, 2, \ldots, m - n$}} \big\}
\end{align*}
is dense in $X_n$.
\item\label{Cn_6918_General_1a_yn}
We assume we are given a sequence $(y_k)_{k = 0, 1, 2, \ldots}$
in $[0, 1]$
such that for all $n \in \Nz$, the set
$\{ y_k \mid k \geq n \}$ is dense in $[0, 1]$.
\item\label{Cn_6918_General_1a_Maps}
The maps
\[
\gm_n \colon C (X_n \amalg Y_n)
\to M_{l (n + 1)} \bigl( C (X_{n + 1} \amalg Y_{n + 1}) \bigr)
\]
will be as in
Construction \ref{Cn_6918_General}(\ref{Cn_6918_General_Diag}),
with the maps
$S_{n, j} \colon X_{n + 1} \amalg Y_{n + 1} \to X_n \amalg Y_n$
appearing there defined as follows:
\begin{enumerate}
\item\label{Cn_6918_General_1a_Maps_XLow}
With $Q^{(n)}_j$ as in~(\ref{Cn_6918_General_1a_Xn}),
we set $S_{n, j} (x) = Q^{(n)}_j (x)$
for $x \in X_{n + 1}$ and $j = 1, 2, \ldots, d (n + 1)$.
\item\label{Cn_6918_General_1a_Maps_XHigh}
$S_{n, j} (x) = y_n$
for
\[
x \in X_{n + 1}
\andeqn
j = d (n + 1) + 1, \, d (n + 1) + 2, \, \ldots, \, l (n + 1) \, .
\]
\item\label{Cn_6918_General_1a_Maps_YBottom}
There are \cfn{s}
\[
R_{n, 1}, \, R_{n, 2}, \, \ldots, \, R_{n, \, d (n + 1)}
\colon Y_{n + 1} \to Y_n
\]
(which will be taken from Proposition \ref{P_arbitrary_trace_space}
below)
such that
$S_{n, j} (y) = R_{n, j} (y)$
for $y \in Y_{n + 1}$ and $j = 1, 2, \ldots, d (n + 1)$.
\item\label{Cn_6918_General_1a_Maps_YTop}
$S_{n, j} (y) = x_n$
for
\[
y \in Y_{n + 1}
\andeqn
j = d (n + 1) + 1, \, d (n + 1) + 2, \, \ldots, \, l (n + 1) \, .
\]
\end{enumerate}
\item\label{Cn_9Y01_Gen_1a_Maps2}
The maps
\[
\gm_n^{(0)} \colon C (X_n \amalg Y_n)
\to M_{l (n + 1)} \bigl( C (X_{n + 1} \amalg Y_{n + 1}) \bigr)
\]
will be as in
Construction \ref{Cn_6918_General}(\ref{Cn_9Y01_Gen_2ndDiag}),
with the maps
$S_{n, j}^{(0)} \colon X_{n + 1} \amalg Y_{n + 1} \to X_n \amalg Y_n$
appearing there given by
$S_{n, j}^{(0)} = S_{n, j}$ for $j = 1, 2, \ldots, d (n + 1)$
and to be specified later
for $j = d (n + 1) + 1, \, d (n + 1) + 2, \, \ldots, \, l (n + 1)$.
\setcounter{CnsEnumi}{\value{enumi}}
\end{enumerate}
\end{Construction}

With the choices in Construction
\ref{Cn_6918_General_Part_1a}(\ref{Cn_6918_General_1a_Maps}),
the map
\[
\gamma_{n} \colon C (X_n) \oplus C (Y_n)
  \to C (X_{n + 1}, \, M_{l (n + 1)})
      \oplus C (Y_{n + 1}, \, M_{l (n + 1)})
\]
in Construction \ref{Cn_6918_General}(\ref{Cn_6918_General_Maps}),
as further specified
in Construction \ref{Cn_6918_General}(\ref{Cn_6918_General_Diag}),
is given as follows.
With $\C^{d (n)}$ viewed as embedded in $M_{d (n)}$
as the diagonal matrices,
there is a \hm{}
\[
\delta_{n} \colon C (Y_n) \to C (Y_{n + 1}, \, \C^{d (n + 1)})
  \subset C (Y_{n + 1}, M_{d (n + 1)})
\]
such that
\begin{align}
\gamma_{n} (f, g)
& =
\Bigg( \diag \Big(
   f \circ Q^{(n)}_1, \, f \circ Q^{(n)}_2, \, \ldots, \,
   f \circ Q^{(n)}_{d (n + 1)}, \,
 \underbrace{g (y_n), \, g (y_n), \, \ldots, \,
  g (y_n)}_{\mbox{$k (n + 1)$ times}} \Big),
 \notag
\\
& \hspace*{3em} {\mbox{}}
 \diag \Big( \delta_{n} (g), \,
 \underbrace{f (x_n), \, f (x_n), \, \ldots, \,
   f (x_n)}_{\mbox{$k (n + 1)$ times}} \Big) \Bigg) \, .
 \label{Eq_7830_Star}
\end{align}
For the purposes of this section,
we need no further information on the maps~$\dt_n$,
except that they send constant functions to constant functions.

\begin{Lemma}\label{L_0725_Simplicity}
Assume the notation and choices in
parts (\ref{Cn_6918_General_dn}),
(\ref{Cn_6918_General_Algs}),
(\ref{Cn_6918_General_Maps}), and~(\ref{Cn_6918_General_Limit})
of Construction~\ref{Cn_6918_General},
and in Construction \ref{Cn_6918_General_Part_1a}
(except part~(\ref{Cn_9Y01_Gen_1a_Maps2}))
and the parts of Construction~\ref{Cn_6918_General}
referred to there.
Then the algebra $C$ is simple.
\end{Lemma}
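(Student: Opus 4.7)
My plan is to apply the standard covering criterion for simplicity of diagonal AH direct limits: $C = \dirlim C_n$ is simple if and only if, for every $m \in \Nz$ and every nonempty open $U \subseteq X_m \amalg Y_m$, there exists $n \geq m$ such that each $z \in X_n \amalg Y_n$ is carried into $U$ by at least one coordinate composition $S_{m, i_m} \circ S_{m + 1, i_{m + 1}} \circ \cdots \circ S_{n - 1, i_{n - 1}}$ appearing as a diagonal entry of $\Gm_{n, m}$. I will split the argument according to whether $U$ meets $X_m$ or is contained entirely in $Y_m$.

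If $U \cap X_m \neq \E$, I will use Construction~\ref{Cn_6918_General_Part_1a}(\ref{condition-points-dense}) to pick $m' > m$ and indices $\nu_1, \ldots, \nu_{m' - m}$ with $(Q^{(m)}_{\nu_1} \circ \cdots \circ Q^{(m' - 1)}_{\nu_{m' - m}})(x_{m'}) \in U \cap X_m$. For $n \geq m' + 2$ and any $z \in X_n$, I will descend through the diagonal $Q$-coordinates from $X_n$ down to $X_{m' + 2}$, then take the $X$-to-$Y$ swap coordinate at level $m' + 2 \to m' + 1$ (landing at $y_{m' + 1} \in Y_{m' + 1}$), then the $Y$-to-$X$ swap at level $m' + 1 \to m'$ (landing at the specific point $x_{m'} \in X_{m'}$), and finally apply the chosen $Q$-composition into $U \cap X_m$. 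For $z \in Y_n$ the construction is parallel: descend via diagonal $R$-coordinates to $Y_{m' + 1}$, perform a single $Y$-to-$X$ swap to reach $x_{m'}$, then finish with the $Q$-composition.

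The harder case is $U \subseteq Y_m$. I will track the evolving open sets $W_k := \bigcup_j S_{k, m; j}^{-1}(U) \subseteq X_k \amalg Y_k$, decomposed as $W_k = W_k^X \amalg W_k^Y$, which satisfy $W_{k + 1} = \bigcup_j S_{k, j}^{-1}(W_k)$. The structural observation is that, as soon as $y_k \in W_k^Y$ for some $k$, the constant swap coordinate from $X_{k + 1}$ to $y_k$ forces $W_{k + 1}^X = X_{k + 1}$; one more step then forces $W_{k + 2} = X_{k + 2} \amalg Y_{k + 2}$, since the swap from $Y_{k + 2}$ now lands at $x_{k + 1} \in W_{k + 1}^X$ and the $Q$-preimages fill up $X_{k + 2}$. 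Thus everything reduces to producing a single level $k \geq m$ with $y_k \in W_k^Y$. Here I will combine the density of $(y_k)_{k \geq m}$ in $[0, 1]$ from Construction~\ref{Cn_6918_General_Part_1a}(\ref{Cn_6918_General_1a_yn}) with the explicit form of the maps $R_{n, j}$ supplied by Proposition~\ref{P_arbitrary_trace_space}, whose construction I expect to ensure that $W_k^Y$ retains $U$ as a subset of $[0,1]$ at every level $k \geq m$ (for instance by including the identity among the $R_{n, j}$ at each stage); then any $y_{k_0} \in U$ with $k_0 \geq m$ already lies in $W_{k_0}^Y$, and we are done. The main obstacle is precisely this reduction: the covering criterion collapses cleanly to a combinatorial statement about reaching the constant points $y_k$, and verifying that statement genuinely uses the structural information about the $R_{n, j}$ from the auxiliary proposition, not merely their continuity.
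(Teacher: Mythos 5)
The paper does not actually write this proof out (it is dismissed as ``a standard argument''), so the comparison is with the intended standard covering argument. Your overall frame is the right one, and your first case is fine: for $U$ meeting $X_m$, the use of Construction~\ref{Cn_6918_General_Part_1a}(\ref{condition-points-dense}) to land $x_{m'}$ in $U$, followed by the two cross-over coordinates $X\to\{y\}$ and $Y\to\{x\}$, is exactly the expected argument (it tacitly uses $k(n)\geq 1$ at the relevant stages, which is implicit in the whole construction). Likewise your reduction in the second case is correct: once some $y_k$ lies in $W_k^Y$, two more steps make $W_{k+2}$ everything, and fullness at one level propagates and yields simplicity; note only that your ``if and only if'' phrasing of the covering criterion is too strong here, since the connecting maps need not be injective (an element supported in $U$ can die), but only sufficiency is used.

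The genuine gap is the step that produces $k$ with $y_k\in W_k^Y$. You justify it by expecting that the maps $R_{n,j}$ coming from Proposition~\ref{P_arbitrary_trace_space} ``retain $U$'', e.g.\ because the identity is among the $R_{n,j}$ at each stage. That property is not available: in the proof of Proposition~\ref{P_arbitrary_trace_space} the maps are identities only at the very first stage, and afterwards they are whatever Lemma~\ref{L_7630_ConvComb} (Thomsen's approximation of a Markov operator by averages of composition operators) produces; they need not include the identity, need not be surjective, and nothing ties them to the points $y_k$. Consequently the sets $W_k^Y=\bigcup_j R^{-1}$-iterates of $U$ can drift inside $[0,1]$, and the hypothesis that the tails $\{y_k\mid k\geq m\}$ are dense in $[0,1]$ does not by itself force any $y_k$ to be caught by the moving set $W_k^Y$: density of a sequence against a fixed open set is not density against a sequence of shrinking or translating open sets. (For instance, if all $R_{n,j}$ were $t\mapsto t^2$, the sets $W_k^Y$ are small intervals drifting toward $1$, and a dense sequence $(y_k)$ can avoid $W_k^Y$ at every $k$ while the images of $U$-supported elements survive in the limit.) What the $U\subseteq Y_m$ case really needs is a $Y$-side analogue of Construction~\ref{Cn_6918_General_Part_1a}(\ref{condition-points-dense}): density in $Y_m=[0,1]$ of the set of points $\bigl(R_{m,j_1}\circ R_{m+1,j_2}\circ\cdots\circ R_{k-1,j_{k-m}}\bigr)(y_k)$, $k\geq m$, or some other hypothesis linking the $R_{n,j}$ to the choice of the $y_k$ (made after the $R$'s are fixed). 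Verifying or arranging that is the actual content of this case, and your proposal leaves it unproved; as written, the argument does not close.
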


As the proof is a standard argument
using Construction
\ref{Cn_6918_General_Part_1a}(\ref{condition-points-dense}),
we omit it.

\begin{Notation}\label{N_7806_Bott}
Let $p \in C (S^2, M_2)$ denote the Bott projection, and let
$L$ be the tautological line bundle over
$S^2 \cong \C \mathbb{P}^1$.
(Thus, the range of~$p$ is the section space of~$L$.)
Recalling that $X_0 = \cone (S^2)$,
parametrized as in Notation~\ref{N_7730_Cone},
define
$b \in C (X_0, M_2)$ by $b (\ld) = \ld \cdot p$
for $\ld \in [0, 1]$.
Assuming the notation and choices in
parts (\ref{Cn_6918_General_dn}),
(\ref{Cn_6918_General_Spaces}),
(\ref{Cn_6918_General_Algs}),
(\ref{Cn_6918_General_Maps}), and~(\ref{Cn_6918_General_Limit})
of Construction~\ref{Cn_6918_General}
and in Construction \ref{Cn_6918_General_Part_1a},
for $n \in \Nz$
set
$b_n = (\id_{M_2} \otimes \tgamma_{n, 0}) (b, 0)
     \in M_2 (C_n)$.
\end{Notation}

We require the following simple lemma
concerning characteristic classes.
It gives us a way of estimating the radius of comparison
which is similar
to the one used in \cite[Lemma 1]{Villadsen-perforation},
but more suitable for the types of estimates we need here.

\begin{Lemma}\label{Lemma:Chern-class}
The Cartesian product $L^{\times k}$
does not embed in a trivial bundle over $(S^2)^k$
of rank less than $2k$.
\end{Lemma}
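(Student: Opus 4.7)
The plan is to obstruct such an embedding with a characteristic-class computation in $H^*((S^2)^k;\Z)$. First I would recall that by K\"unneth, $H^*((S^2)^k;\Z)$ is the exterior algebra over $\Z$ on classes $\af_1,\ldots,\af_k$ in degree~$2$, where $\af_i=\pi_i^*c_1(L)$ for the $i$-th coordinate projection $\pi_i\colon(S^2)^k\to S^2$; in particular $\af_i^2=0$ and $\af_1\af_2\cdots\af_k$ is a generator of $H^{2k}((S^2)^k;\Z)$, hence nonzero. The Cartesian product $L^{\times k}$ is the external Whitney sum $\bigoplus_{i=1}^k\pi_i^*L$, a rank-$k$ complex vector bundle over $(S^2)^k$, so the Whitney product formula gives its total Chern class as
\[
c(L^{\times k})=\prod_{i=1}^k(1+\af_i).
\]

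Next, I would suppose $L^{\times k}$ embeds as a subbundle of the trivial bundle of rank~$N$ over $(S^2)^k$. Since $(S^2)^k$ is compact Hausdorff, fix a Hermitian metric on the trivial bundle and let $E$ be the orthogonal complement of $L^{\times k}$; then $E$ has rank $N-k$ and $L^{\times k}\oplus E$ is trivial. The Whitney formula then yields $c(L^{\times k})\cdot c(E)=1$, whence
\[
c(E)=\prod_{i=1}^k(1+\af_i)^{-1}=\prod_{i=1}^k(1-\af_i),
\]
where the last step uses $\af_i^2=0$. Expanding this product, the degree-$2k$ component is
\[
c_k(E)=(-1)^k\af_1\af_2\cdots\af_k,
\]
which is nonzero in $H^{2k}((S^2)^k;\Z)$ by the first paragraph.

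Finally, since the $j$-th Chern class of a complex vector bundle vanishes whenever $j$ exceeds its rank, the nonvanishing of $c_k(E)$ forces $\rank(E)\geq k$, and therefore $N-k\geq k$, i.e.\ $N\geq 2k$, as required. The argument is short and I do not anticipate any real obstacle; the only points that need care are the interpretation of ``Cartesian product'' as the external Whitney sum (so that the Künneth decomposition of the total Chern class goes through) and the use of compact Hausdorffness of $(S^2)^k$ to produce the complementary subbundle~$E$, both of which are standard.
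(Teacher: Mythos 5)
Your argument is correct and is essentially the same as the paper's: compute $c(L^{\times k})=\prod_{i}(1+\af_i)$ via naturality and the Whitney product formula, pass to a complementary bundle of the supposed trivial bundle, observe that its total Chern class $\prod_i(1-\af_i)$ has a nonzero top-degree component, and conclude the complement has rank at least $k$, forcing the trivial bundle to have rank at least $2k$. Your extra remarks (Künneth description of the cohomology ring, compactness to produce the orthogonal complement) only make explicit points the paper leaves implicit.
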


\begin{proof}
We refer the reader to \cite[Section 14]{milnor-stasheff}
for an account of Chern classes.
The Chern character $c (L)$ is of the form $1 + \eps$,
where $\eps$ is a generator of $H^2 (S^2, \Z)$,
and the product operation satisfies $\eps^2 = 0$.
Let $P_1, P_2, \ldots, P_k \colon (S^2)^k \to S^2$
be the coordinate projections.
For $j = 1, 2, \ldots, k$, set $\eps_j = P_j^* (\eps)$.
The elements $\eps_1, \eps_2, \ldots, \eps_k \in H^2 ((S^2)^k, \Z)$,
along with $1 \in H^0 ((S^2)^k, \Z)$ (the standard generator)
generate the cohomology ring of $(S^2)^k$,
and satisfy $\eps_j^2 = 0$ for $j = 1, 2, \ldots, k$.
By naturality of the Chern character
(\cite[Lemma 14.2]{milnor-stasheff})
and the product theorem (\cite[(14.7) on page 164]{milnor-stasheff}),
we have $c (L^{\times k}) = \prod_{j = 1}^k (1 + \eps_j)$.
Now, suppose $L^{\times k}$ embeds as a subbundle
of a trivial bundle~$E$.
Let $F$ be the complementary bundle,
so that $L^{\times k} \oplus F = E$.
By the product theorem,
$c (L^{\times k}) c (F) = c (L^{\times k}\oplus F) = c (E) = 1$.
Thus, $c (F) = c (L^{\times k})^{-1} = \prod_{j = 1}^k (1 - \eps_j)$.
Since $c (F)$ has a nonzero term in the top cohomology class
$H^{2 k} ((S^2)^{k})$,
it follows that $\rank (F)$ is at least $k$.
Thus, $\rank (E) = \rank (L^{\times k}) + \rank (F) \geq 2k$,
as required.
\end{proof}

\begin{Lemma}\label{L_7906_RankB}
Adopt the assumptions and notation of Notation~\ref{N_7806_Bott}.
Let $n \in \N$.
Then $b_n |_{Z_n}$ is the orthogonal sum of a
projection $p_n$ whose range
is isomorphic to the section space
of the Cartesian product bundle $L^{\times s (n)}$
and a constant function of rank at most
$r (n) - s (n) - t(n)$.
\end{Lemma}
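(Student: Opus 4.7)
The plan is induction on $n \geq 1$, strengthening the lemma's assertion so that the induction closes on itself. Write $b_n = (f_n, g_n)$ according to the decomposition $C_n = M_{r(n)} \otimes (C(X_n) \oplus C(Y_n))$. The strengthened hypothesis I would carry along has three parts: (a) $f_n|_{Z_n}$ splits orthogonally as $p_n + c_n$ with $p_n$ a projection whose range is isomorphic to the section space of $L^{\times s(n)}$ and $c_n$ a constant matrix of rank at most $r(n) - s(n) - t(n)$; (b) for every $x \in X_n$, $\rank f_n(x) \leq r(n) - t(n)$; and (c) $g_n$ is constant on $Y_n$ with value of rank at most $t(n)$. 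Only (a) is claimed in the lemma, but (b) and (c) are needed to propagate the rank bookkeeping from step $n$ to step $n+1$.

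For the base case $n = 1$, direct substitution into~(\ref{Eq_7830_Star}) shows that the $X_1$-component of $b_1$, restricted to $Z_1 = (S^2)^{d(1)}$, is the direct sum of $d(1)$ pullbacks of the Bott projection along the coordinate projections plus $k(1)$ zero summands; the pullback sum is a projection whose range is $L^{\times d(1)} = L^{\times s(1)}$, and the zero summands form a constant part of rank $0 = r(1) - s(1) - t(1)$. The $Y_1$-component is the constant matrix $\diag(\delta_0(0), b(x_0), \ldots, b(x_0))$ with $k(1)$ copies of $b(x_0)$, which has rank at most $k(1) = t(1)$.

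For the inductive step, given (a)--(c) at $n$, I would apply~(\ref{Eq_7830_Star}) amplified by $\id_{M_2 \otimes M_{r(n)}}$. On $Z_{n+1}$ the cone map $Q^{(n)}_j$ restricts to the coordinate projection $P^{(n)}_j \colon Z_{n+1} = Z_n^{d(n+1)} \to Z_n$, so the decomposition of $f_n|_{Z_n}$ from (a) pulls back block-by-block: the first $d(n+1)$ diagonal blocks of $f_{n+1}|_{Z_{n+1}}$ contribute $(P^{(n)}_j)^*(p_n) + c_n$, and the remaining $k(n+1)$ blocks each equal the constant $g_n(y_n)$. Summing the projection parts yields $\bigoplus_{j=1}^{d(n+1)} (P^{(n)}_j)^*(L^{\times s(n)}) \cong L^{\times s(n+1)}$, using $s(n+1) = d(n+1) s(n)$ and compatibility of pullbacks with the Cartesian product construction for line bundles. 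Summing the constant parts gives a constant of rank at most
\[
d(n+1)[r(n) - s(n) - t(n)] + k(n+1) t(n) = r(n+1) - s(n+1) - t(n+1),
\]
where the equality is a direct calculation from the recursions in Construction~\ref{Cn_6918_General}(\ref{Cn_6918_General_dn}); this establishes (a) at $n+1$. Claim (b) at $n+1$ follows analogously from $d(n+1)[r(n) - t(n)] + k(n+1) t(n) = r(n+1) - t(n+1)$. For (c), the $Y$-component of $\gamma_n$ produces $\diag(\delta_n(g_n), f_n(x_n), \ldots, f_n(x_n))$, which is constant because $g_n$ is constant and $\delta_n$ preserves constants (the property noted after~(\ref{Eq_7830_Star})), and its rank is at most $d(n+1) t(n) + k(n+1)[r(n) - t(n)] = t(n+1)$.

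The main obstacle is the bundle identification: pulling back $L^{\times s(n)}$ via each of the $d(n+1)$ coordinate projections $Z_{n+1} = Z_n^{d(n+1)} \to Z_n$ and taking the direct sum over $j$ must yield precisely $L^{\times s(n+1)}$ on $(S^2)^{s(n+1)}$. This is a naturality fact for Cartesian products of line bundles, already implicit in the setup of Lemma~\ref{Lemma:Chern-class}; once granted, the rest of the argument reduces to verifying the recursions defining $r(n)$, $s(n)$, and $t(n)$.
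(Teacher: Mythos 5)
Your proposal is correct and is essentially the paper's own argument: an induction through the diagonal maps using the formula~(\ref{Eq_7830_Star}), tracking a projection part built from pullbacks of the Bott projection along the coordinate projections together with a constant part, with the rank bookkeeping $d(n+1)[r(n)-s(n)-t(n)]+k(n+1)t(n)=r(n+1)-s(n+1)-t(n+1)$ and $d(n+1)t(n)+k(n+1)[r(n)-t(n)]=t(n+1)$. The only (harmless) difference is in the strengthened hypothesis: the paper carries the full structure of the $X_n$-component on all of $X_n$ (as $\bigoplus_j b\circ\cone\bigl(T^{(n)}_j\bigr)$ plus a constant), which automatically bounds the rank at the evaluation point $x_n\in X_n\setminus Z_n$, whereas you keep only the restriction to $Z_n$ and add the pointwise rank bound (b) --- which you should also verify at the base case, though that check is immediate.
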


We don't expect $b_n |_{Z_n}$ to be a projection,
since some of the point evaluations
occurring in the maps of the direct system will be at
points $x \in \cone (Z_m) \setminus Z_m$
for values of $m < n$,
and $b_m (x)$ is not a \pj{} for such~$x$.

We don't need the estimate on the rank of the second part of
the description of $b_n |_{Z_n}$;
it is included to make the construction more explicit.
If there are no evaluations at the ``cone points''
\[
(Z_m \times \{ 0 \}) / (Z_m \times \{ 0 \})
 \in (Z_m \times [0, 1]) / (Z_m \times \{ 0 \})
\]
(following the parametrization in Notation~\ref{N_7730_Cone}),
then this rank
will be exactly $r (n) - s (n) - t(n)$.

\begin{proof}[Proof of Lemma~\ref{L_7906_RankB}]
For $n \in \Nz$ write
$b_n = (c_n, g_n)$
with
\[
c_n \in M_2 (C (X_n, M_{r (n)} ))
\andeqn
g_n \in M_2 (C (Y_n, M_{r (n)} )) \, .
\]
Further,
for $j = 1, 2, \ldots, s (n)$
let $T_j^{(n)} \colon (S^2)^{s (n)} \to S^2$
be the $j$-th coordinate projection.
We claim that
$c_n$ is an orthogonal sum $c_{n, 0} + c_{n, 1}$,
in which $c_{n, 0}$ is the direct sum of
the functions $b \circ \cone \big( T_j^{(n)} \big)$
for $j = 1, 2, \ldots, s (n)$
and $c_{n, 1}$ is a constant function of rank at most
$r (n) - s (n) - t (n)$,
and moreover that $g_n$
is a constant function of rank at most $t (n)$.
The statement of the lemma follows from this claim.

The proof of the claim is by induction on~$n$.
The claim is true for $n = 0$,
by the definition of~$b$
and since $s (0) = 1$,
$t (0) = 0$,
and $r (0) - s (0) - t (0) = 0$.

Now assume that the claim is known for~$n$,
recall that $\Gm_{n + 1, \, n} = \id_{M_{r (n)}} \otimes \gm_n$
(see Construction \ref{Cn_6918_General}(\ref{Cn_6918_General_Maps})),
and examine the summands in the description~(\ref{Eq_7830_Star})
of the map $\gamma_{n}$
(after Construction \ref{Cn_6918_General_Part_1a}).
With this convention,
first take $(f, g)$ in~(\ref{Eq_7830_Star})
to be $( c_{n, 0}, 0 )$.
The first coordinate
$\Gamma_{n + 1, n} ( c_{n, 0}, 0 )_1$
is of the form required for $c_{n + 1, 0}$,
while
$\Gamma_{n + 1, n} ( c_{n, 0}, 0 )_2$
is a constant function of rank $k (n + 1) s (n)$
unless $c_n (x_n) = 0$,
in which case it is zero.
In the same manner, we see that:
\begin{itemize}
\item
$\Gamma_{n + 1, n} ( c_{n, 1}, 0 )_1$
is constant of rank at most
$d (n + 1) [r (n) - s (n) - t (n)]$.
\item
$\Gamma_{n + 1, n} ( c_{n, 1}, 0 )_2$
is constant of rank at most
$k (n + 1) [r (n) - s (n) - t (n)]$.
\item
$\Gamma_{n + 1, n} ( 0, g_n )_1$
is constant of rank at most
$k (n + 1) t (n)$.
\item
$\Gamma_{n + 1, n} ( 0, g_n )_2$
is constant of rank at most
$d (n + 1) t (n)$.
\end{itemize}
Putting these together,
we get in the first coordinate of
$\Gamma_{n + 1, n} (b_n)$ the direct sum of
$c_{n + 1, 0}$ as described
and a constant function of rank at most
\[
d (n + 1) [r (n) - s (n) - t (n)] + k (n + 1) t (n) \, .
\]
A computation shows that this expression is
equal to $r (n + 1) - s (n + 1) - t (n + 1)$.
In the second coordinate we get a
constant function of rank at most
\[
k (n + 1) s (n) + k (n + 1) [r (n) - s (n) - t (n)] + d (n + 1) t (n)
 = t (n + 1) \, .
\]
This completes the induction,
and the proof.
\end{proof}

\begin{Cor}\label{C_7808_BigRank}
Adopt the assumptions and notation of Notation~\ref{N_7806_Bott}.
Let $n \in \Nz$.
Let $e = (e_1 ,e_2)$ be an element in
$M_{\infty} (C_n) \cong M_{\infty}(C (X_n) \oplus C (Y_n))$
such that $e_1$ is a projection
which is equivalent to a constant projection.
If there exists $x \in M_{\infty} (C_n)$
such that $\| x e x^* - b_n \| < \frac{1}{2}$ then
$\rank (e_1) \geq 2 s (n)$.
\end{Cor}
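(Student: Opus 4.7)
The plan is to restrict everything to the subspace $Z_n \subseteq X_n$ and convert the norm estimate into a Cuntz subequivalence between a projection whose range bundle has nontrivial top Chern class and a trivial vector bundle, at which point Lemma~\ref{Lemma:Chern-class} supplies the conclusion. Using the decomposition $M_\infty(C_n) \cong M_\infty(C(X_n)) \oplus M_\infty(C(Y_n))$, write $e = (e_1, e_2)$ and $x = (x_1, x_2)$, and recall from the proof of Lemma~\ref{L_7906_RankB} that $b_n = (c_n, g_n)$. The hypothesis forces $\| x_1 e_1 x_1^* - c_n \| < \tfrac{1}{2}$, and in particular $\| x_1 e_1 x_1^*|_{Z_n} - c_n|_{Z_n} \| < \tfrac{1}{2}$.

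The core step uses R\o{}rdam's standard fact that $\|a - b\| < \varepsilon$ for positive $a, b$ implies $(a - \varepsilon)_+ \precsim b$. By Lemma~\ref{L_7906_RankB}, $c_n|_{Z_n}$ admits an orthogonal decomposition $c_n|_{Z_n} = p_n + h_n$ with $p_n h_n = 0$, where $p_n$ is a projection whose range is the section space of $L^{\times s(n)}$ and $h_n \geq 0$. Since the two summands are orthogonal, functional calculus gives $(c_n|_{Z_n} - \tfrac{1}{2})_+ = \tfrac{1}{2} p_n + (h_n - \tfrac{1}{2})_+$, so
\[
p_n \sim \tfrac{1}{2} p_n \precsim (c_n|_{Z_n} - \tfrac{1}{2})_+ \precsim x_1 e_1 x_1^*|_{Z_n} \precsim e_1|_{Z_n}
\]
in $M_\infty(C(Z_n))$.

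To finish I would invoke the hypothesis that $e_1$ is Murray--von~Neumann equivalent to a constant projection in $M_\infty(C(X_n))$: its range bundle is then trivial of rank $r := \rank(e_1)$, and remains so after restriction to $Z_n = (S^2)^{s(n)}$. For projections over a compact Hausdorff space, Cuntz subequivalence coincides with Murray--von~Neumann subequivalence, which in turn coincides with the subbundle relation on their range bundles, so the above chain gives an embedding of $L^{\times s(n)}$ as a subbundle of a trivial rank-$r$ bundle over $(S^2)^{s(n)}$. Lemma~\ref{Lemma:Chern-class} then forces $r \geq 2 s(n)$, which is the desired bound.

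The main point to get right is the translation between Cuntz subequivalence in $M_\infty(C(Z_n))$ and the subbundle relation for range bundles; everything else is a bookkeeping application of Lemma~\ref{L_7906_RankB}, R\o{}rdam's cutoff lemma, and Lemma~\ref{Lemma:Chern-class}.
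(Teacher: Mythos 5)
Your proof is correct and follows essentially the same route as the paper: restrict to $Z_n$, extract from Lemma~\ref{L_7906_RankB} the projection whose range is the section space of $L^{\times s(n)}$, show it is Murray--von Neumann subequivalent to $e_1|_{Z_n}$, and conclude with Lemma~\ref{Lemma:Chern-class} using the triviality of the range bundle of $e_1|_{Z_n}$. The only difference is cosmetic, in the middle step: the paper compresses $x e x^*|_{Z_n}$ by that projection and uses closeness of projections, whereas you use R{\o}rdam's $(a-\varepsilon)_+$ lemma together with the orthogonality of the decomposition; both yield the same subequivalence.
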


\begin{proof}
Recall from
Construction
\ref{Cn_6918_General_Part_1a}(\ref{Cn_6918_General_1a_Xn})
and Notation~\ref{N_7730_Cone}
that
\[
Z_n = (S^2)^{s (n)}
\andeqn
Z_n \subset \cone (Z_n) = X_n \subset X_n \amalg Y_n \, .
\]
Also recall the line bundle $L$
and the projection~$p$ from Notation~\ref{N_7806_Bott}.

It follows from Lemma~\ref{L_7906_RankB}
that there is a \pj{} $q \in M_{2 r (n)} (C (Z_n))$
whose range is isomorphic to the section space
of the $s (n)$-dimensional vector bundle
$L^{\times s (n)}$
and such that $q (b_n |_{Z_n} ) q = q$.
Now $\| x e x^* - b_n \| < \frac{1}{2}$
implies $\| q (x e x^* |_{Z_n} ) q - q \| < \frac{1}{2}$.
Since $e |_{Z_n}$ and $q |_{Z_n}$ are projections,
it follows that $q |_{Z_n}$
is Murray-von Neumann equivalent to
a subprojection of $e |_{Z_n} = e_1 |_{Z_n}$.
Therefore $\rank ( e |_{Z_n} ) \geq 2 s (n)$
by Lemma~\ref{Lemma:Chern-class}.
So $\rank ( e_1 ) \geq 2 s (n)$.
\end{proof}

Although not strictly needed for the sequel, we record the following.

\begin{Cor}\label{Cor-rc-C}
Assume the notation and choices in
parts (\ref{Cn_6918_General_dn}),
(\ref{Cn_6918_General_Size})
(including $\kp > \frac{1}{2}$),
(\ref{Cn_6918_General_Algs}),
(\ref{Cn_6918_General_Maps}), and~(\ref{Cn_6918_General_Limit})
of Construction~\ref{Cn_6918_General},
and in Construction \ref{Cn_6918_General_Part_1a}
(except part~(\ref{Cn_9Y01_Gen_1a_Maps2}))
and the parts of Construction~\ref{Cn_6918_General}
referred to there.
Then the algebra $C$ satisfies  $\rc (C) \geq 2 \kappa - 1 > 0$.
\end{Cor}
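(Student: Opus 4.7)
I will show $\rc(C) \geq 2\kappa - 1$ by exhibiting, for each $\rho \in (0, 2\kappa - 1)$, a pair of elements that witnesses the failure of $\rho$-comparison. On the small side I use $\Gm_{\infty, n}(b_n) \in M_2(C)$ from Notation~\ref{N_7806_Bott}; on the big side a constant projection $\Gm_{\infty, n}(e)$ whose rank is squeezed between the lower bound forced by the trace estimate and the upper bound forced by Corollary~\ref{C_7808_BigRank}, for $n$ taken large.

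Given $\rho$, I will pick $n$ so large that $r(n)(2\kappa - 1 - \rho) > 1$, and then choose an integer $p$ with
\[
r(n) - t(n) + \rho \, r(n) \;<\; p \;<\; 2\kappa \, r(n) \, .
\]
Such $p$ exists because the interval has length at least $r(n)(2\kappa - 1 - \rho)$ (the slack $t(n) \geq 0$ only helps), which is $> 1$ by the choice of~$n$. Set $e = (\mathbf{1}_p, \mathbf{1}_p) \in M_p(C_n)$, the constant rank-$p$ projection in each coordinate. Since $\gm_n$ sends constant functions to constant functions (see the formula following Construction~\ref{Cn_6918_General_Part_1a}), a short induction shows that $\Gm_{k, n}(e)$ is a constant projection of rank $p \, r(k)/r(n)$ in each coordinate of $C_k$, for every $k \geq n$.

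For the trace estimate, every $\ta \in \T(C)$ restricts to a tracial state on $C_n$ given by
\[
\ta(a_1, a_2) = \frac{1}{r(n)}\left( \int_{X_n} \Tr(a_1) \, d\mu_1 + \int_{Y_n} \Tr(a_2) \, d\mu_2 \right)
\]
for positive Radon measures $\mu_1, \mu_2$ on $X_n, Y_n$ with $\mu_1(X_n) + \mu_2(Y_n) = 1$. Thus $d_\ta(e) = p/r(n)$ independently of~$\ta$. Writing $b_n = (c_n, g_n)$, the structure established in the proof of Lemma~\ref{L_7906_RankB} shows that $\rank(c_n(x)) \leq r(n) - t(n)$ for all $x \in X_n$ and $\rank(g_n(y)) \leq t(n)$ for all $y \in Y_n$. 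The hypothesis $\kappa > 1/2$ combined with $t(n) \leq r(n) - s(n)$ (which is built into the proof of Lemma~\ref{L_7906_RankB}) forces $t(n) < r(n)/2$, so $r(n) - t(n) > t(n)$, and therefore
\[
d_\ta(b_n) \;\leq\; \frac{(r(n) - t(n)) \mu_1(X_n) + t(n) \mu_2(Y_n)}{r(n)} \;\leq\; \frac{r(n) - t(n)}{r(n)} \, .
\]
By the choice of~$p$, $d_\ta(e) - d_\ta(b_n) > \rho$ for every $\ta \in \T(C)$.

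For the non-Cuntz-dominance, suppose for contradiction $\Gm_{\infty, n}(b_n) \precsim \Gm_{\infty, n}(e)$ in~$C$, and fix $v \in M_{\infty}(C)$ with $\| v \, \Gm_{\infty, n}(e) v^* - \Gm_{\infty, n}(b_n) \| < 1/4$. Approximate $v$ to within a small error by $\Gm_{\infty, m}(v_m)$ for some $m \geq n$ and $v_m \in M_{\infty}(C_m)$, using density of $\bigcup_m \Gm_{\infty, m}(M_{\infty}(C_m))$; combined with $\Gm_{\infty, n}(e) = \Gm_{\infty, m}(\Gm_{m, n}(e))$ and $\Gm_{\infty, n}(b_n) = \Gm_{\infty, m}(b_m)$, this gives $\bigl\| \Gm_{\infty, m}\bigl(v_m \, \Gm_{m, n}(e) v_m^* - b_m\bigr) \bigr\| < 1/3$. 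The inductive-limit norm identity $\| \Gm_{\infty, m}(y) \| = \lim_{k \to \infty} \| \Gm_{k, m}(y) \|$ then furnishes $k \geq m$ with $\| \Gm_{k, m}(v_m) \, \Gm_{k, n}(e) \, \Gm_{k, m}(v_m)^* - b_k \| < 1/2$ in $M_{\infty}(C_k)$. But the first coordinate of $\Gm_{k, n}(e)$ is a constant projection of rank $p \, r(k)/r(n)$, and Lemma~\ref{L_6922_Noninc} gives $s(k)/r(k) > \kappa$, so $p \, r(k)/r(n) < 2\kappa \, r(k) < 2 s(k)$, contradicting Corollary~\ref{C_7808_BigRank} at level~$k$. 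The main obstacle is this last step, where the hypothetical Cuntz witness $v \in M_{\infty}(C)$ must be squeezed down through the inductive limit to a single finite stage~$C_k$ tightly enough for the rank threshold to bite; the trace bookkeeping is comparatively easy once one notices that $\kappa > 1/2$ drives $t(n)$ below $r(n)/2$. Letting $\rho \nearrow 2\kappa - 1$ completes the proof.
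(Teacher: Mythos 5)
Your proposal is correct and takes essentially the same route as the paper's own proof: compare a constant projection whose rank is chosen strictly between the trace-forced lower bound and $2\kappa\, r(n)$ against the Bott-type element $b_n$, get the dimension-function inequality from the rank bookkeeping in (the proof of) Lemma~\ref{L_7906_RankB}, and block Cuntz subequivalence by pushing a hypothetical witness down to a finite stage and applying Corollary~\ref{C_7808_BigRank}. The only differences are cosmetic: you use the sharper bound $d_\tau(b_n) \leq (r(n)-t(n))/r(n)$ (justified via $t(n) \leq r(n)-s(n)$, which indeed follows from the recursion for $t$) where the paper simply uses that both components of $b_m$ have rank below $r(m)$, and you write out in detail the inductive-limit approximation that the paper compresses into a single sentence.
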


\begin{proof}
Suppose $\rh < 2 \kappa - 1$.
We show that $C$ does not have $\rh$-comparison.
Choose $n \in \N$ such that $1 / {r (n)} < 2 \kp - 1 - \rh$.
Choose $M \in \Nz$ such that $\rh + 1 < M / {r (n)} < 2 \kp$.
Let $e \in M_{\infty} (C_n)$ be a trivial \pj{}
of rank~$M$.
By slight abuse of notation,
we use $\tgamma_{m, n}$ to denote the amplified map
from $M_{\infty} (C_n)$ to $M_{\infty} (C_m)$ as well.
For $m > n$, the rank of $\tgamma_{m, n} (e)$
is $M \cdot \frac{r (m)}{r (n)}$,
and the choice of $M$ guarantees that
this rank is strictly less than $2 s (m)$.
Now, for any trace $\tau$ on $C_m$
(and thus for any trace on $C$),
and justifying the last step afterwards, we have
\[
d_{\tau} (\tgamma_{m, n} (e))
 = \tau (\tgamma_{m, n} (e))
 = \frac{1}{r (m)} \cdot M \cdot \frac{r (m)}{r (n)}
 \geq 1 + \rh
 > d_{\tau} (b_m) + \rh \, .
\]
To explain the last step,
recall $b_m$ from Notation~\ref{N_7806_Bott},
and use Lemma~\ref{L_7906_RankB}
to see that the ranks of its components
$(b_m)_1 \in M_2 \bigl( C (X_m, M_{r (m)}) \bigr)$
and $(b_m)_2 \in M_2 \bigl( C (Y_m, M_{r (m)}) \bigr)$
are both less than $r (m)$,
while the identity element has rank $r (m)$.

On the other hand, if
$\tgamma_{\infty, 0} (b) \precsim \tgamma_{\infty, n} (e)$
then, in particular, there exists some $m > n$
and $x \in M_{\infty} (C_m)$
such that $\|x\tgamma_{m, n} (e)x^* - b_m\| < \frac{1}{2}$,
which contradicts Corollary~\ref{C_7808_BigRank}.
\end{proof}

\begin{Notation}\label{notation-q-q-perp}
We assume the notation and choices in
parts (\ref{Cn_6918_General_dn}),
(\ref{Cn_6918_General_Spaces}),
(\ref{Cn_6918_General_Algs}),
(\ref{Cn_6918_General_Maps}),
and~(\ref{Cn_6918_General_Limit})
of Construction~\ref{Cn_6918_General}.
In particular, $C_0 = C (X_0) \oplus C (Y_0)$.
Define $q_0 = (1, 0) \in C (X_0) \oplus C (Y_0)$
and $q_0^{\perp} = 1 - q_0$.
For $n \in \N$
define $q_n = \tgamma_{n, 0} (q_0) \in C_n$
and $q_n^{\perp} = 1 - q_n$,
and finally,
define $q = \tgamma_{\infty, 0} (q_0) \in C$
and $q^{\perp} = 1 - q$.
\end{Notation}

\begin{Lemma}\label{L_7816_Rankq}
Make the assumptions in Notation~\ref{notation-q-q-perp}.
Further assume the notation and choices in
Construction \ref{Cn_6918_General_Part_1a}
(except part~(\ref{Cn_9Y01_Gen_1a_Maps2})).
Then the projection
\[
1 - q_n \in M_{l (n)} ( C (X_n) ) \oplus M_{l (n)} ( C (Y_n) )
\]
has the form $(e, f)$
for a constant projection
$e \in M_{l (n)} ( C (X_n) ) = C (X_n, M_{l (n)})$
of rank $t (n)$ and a constant projection
$f \in M_{l (n)} ( C (Y_n) ) = C (Y_n, M_{l (n)})$
of rank $r (n) - t (n)$.
\end{Lemma}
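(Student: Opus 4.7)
The plan is a direct induction on $n \in \Nz$, proving at each stage that $1 - q_n$ has the form $(E_n, F_n)$ where $E_n, F_n \in M_{r (n)}$ are \emph{scalar} (i.e.\ constant-function) projections of ranks $t (n)$ and $r (n) - t (n)$ respectively; by construction of $C_n$ this lands in $M_{r (n)} (C (X_n)) \oplus M_{r (n)} (C (Y_n))$. The base case $n = 0$ is the definition: $q_0 = (1, 0)$, so $1 - q_0 = (0, 1)$, and indeed $t (0) = 0$ and $r (0) - t (0) = 1$.

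For the inductive step, assume $1 - q_n = (E_n, F_n)$ has this form. Since the entries of $(E_n, F_n)$ are scalars, applying $\Gm_{n + 1, \, n} = \id_{M_{r (n)}} \otimes \gm_n$ reduces entrywise to applying $\gm_n$ to a constant pair $(\af, \bt) \in \C \oplus \C \subset C (X_n) \oplus C (Y_n)$. Formula~(\ref{Eq_7830_Star}) applied to a constant pair gives $\af \circ Q_j^{(n)} = \af$, $\bt (y_n) = \bt$, and $\af (x_n) = \af$; combining with the observation after~(\ref{Eq_7830_Star}) that $\dt_n$ sends constants to constants, plus unitality of $\dt_n$ (forcing $\dt_n (\bt) = \bt \cdot 1_{M_{d (n + 1)}}$), one obtains
\[
\gm_n (\af, \bt)
 = (\af P_1 + \bt P_2, \, \af Q_2 + \bt Q_1),
\]
where $P_1, Q_1 \in M_{l (n + 1)}$ are the scalar projections of rank $d (n + 1)$ onto the first $d (n + 1)$ coordinates and $P_2 = 1 - P_1$, $Q_2 = 1 - Q_1$ are their complements of rank $k (n + 1)$.

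Amplifying gives
\[
\Gm_{n + 1, \, n} (E_n, F_n)
 = (E_n \otimes P_1 + F_n \otimes P_2, \,
    E_n \otimes Q_2 + F_n \otimes Q_1)
 \in M_{r (n + 1)} \oplus M_{r (n + 1)},
\]
which is again a pair of scalar projections (the cross terms vanish since $P_1 P_2 = 0 = Q_1 Q_2$, so each component is an orthogonal sum of two tensor-product projections). Counting ranks,
\[
\rank (E_n \otimes P_1 + F_n \otimes P_2)
 = d (n + 1) \, t (n) + k (n + 1) [r (n) - t (n)]
 = t (n + 1)
\]
by the recursion in Construction~\ref{Cn_6918_General}(\ref{Cn_6918_General_dn}), and analogously the rank of the second component comes out to $(r (n) - t (n)) d (n + 1) + t (n) k (n + 1) = r (n + 1) - t (n + 1)$. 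This closes the induction.

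There is no real obstacle; the only point worth flagging is the reduction to the scalar case, which shows that the precise form of the embedding $\dt_n$ is irrelevant here beyond its unitality and its preservation of constant functions.
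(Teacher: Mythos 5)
Your proof is correct and is essentially the paper's own argument: the paper disposes of this lemma as ``an easy induction argument, using the fact that the image of a constant function under a diagonal map is again a constant function,'' which is exactly your reduction to scalar pairs via~(\ref{Eq_7830_Star}), with the rank bookkeeping matching the recursion defining $t (n)$ in Construction \ref{Cn_6918_General}(\ref{Cn_6918_General_dn}). You have merely written out the details the paper omits (including the check $d (n + 1) [r (n) - t (n)] + k (n + 1) t (n) = r (n + 1) - t (n + 1)$), so there is nothing to change.
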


{}From Construction \ref{Cn_6918_General_Part_1a},
we don't actually need to know anything about
the spaces $X_n$ and~$Y_n$,
we don't need to know anything about the points $x_n$
and~$y_n$ except which spaces they are in,
and we don't need to know anything about the
maps $Q_j^{(n)}$ and $R_{n, j}$ except their domains
and codomains.

\begin{proof}[Proof of Lemma~\ref{L_7816_Rankq}]
The proof is an easy induction argument,
using the fact that the image of a constant function
under a diagonal map is again a constant function.
\end{proof}

\begin{Lemma}\label{Lemma:rc_lower_bound}
Assume the notation and choices in
parts (\ref{Cn_6918_General_dn})--(\ref{Cn_6918_General_Limit})
of Construction \ref{Cn_6918_General},
Construction
\ref{Cn_6918_General_Part_1a}
(except part~(\ref{Cn_9Y01_Gen_1a_Maps2})),
and Notation~\ref{notation-q-q-perp},
including
$k (n) < d (n)$
for all $n \in \Nz$,
$\kp > \frac{1}{2}$,
$\om > \om'$,
and
$2 \kappa - 1 > 2 \om$.
Then
\[
\rc (q^{\perp} C q^{\perp}) \geq \frac{2 \kappa - 1}{2 \om} \, .
\]
\end{Lemma}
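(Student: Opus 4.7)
Let $\rho < (2\kappa-1)/(2\omega)$; I would produce positive elements $\tilde a, \tilde b \in M_{\infty}(q^{\perp} C q^{\perp})$ with $\tilde a \not\precsim \tilde b$ but $d_{\tilde\tau}(\tilde a) + \rho < d_{\tilde\tau}(\tilde b)$ for every tracial state $\tilde\tau$ on $q^{\perp} C q^{\perp}$, which shows that the corner does not have $\rho$-comparison. Since $C$ is simple by Lemma~\ref{L_0725_Simplicity}, every such $\tilde\tau$ has the form $\tau(\cdot)/\tau(q^{\perp})$ for some $\tau \in T(C)$, so the inequality to verify is
\[
d_\tau(\tilde a) + \rho\,\tau(q^{\perp}) < d_\tau(\tilde b) \qquad \text{for all } \tau \in T(C).
\]
This is the corner analogue of Corollary~\ref{Cor-rc-C}.

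I would fix a large $n$ (with $s(n)/r(n)$ close to $\kappa$ and $t(n)/r(n)$ close to its supremum, which is at most $2\omega$ by Lemma~\ref{L_6922_Rho}) and build both elements at the $n$th level of the corner. For $\tilde a$, the $X_n$-component of $M_N(q_n^{\perp} C_n q_n^{\perp})$ is $M_{N t(n)}(C(X_n))$, so taking $N$ large enough that $Nt(n) \geq 2s(n)$ leaves room in the $X_n$-block for the Bott-type element $(z, \lambda) \mapsto \lambda P(z)$ on $X_n = \cone(Z_n)$, where $P \in M_{Nt(n)}(C(Z_n))$ is a projection whose range is the section space of $L^{\times s(n)}$ (mimicking $b$ from Notation~\ref{N_7806_Bott}). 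For $\tilde b$, I would take a trivial projection in $M_{N'}(q_n^{\perp} C_n q_n^{\perp})$ of constant $X_n$-rank $a$ and constant $Y_n$-rank $b$, with $(a, b)$ to be selected.

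Parametrizing a trace $\tau \in T(C)$ by the $X_n$-mass $\alpha_n = \mu_n(X_n) \in [0,1]$ of the measure representing $\tau|_{C_n}$, and using Lemma~\ref{L_7816_Rankq} to compute $\tau(q^{\perp}) = \theta_n \alpha_n + (1-\theta_n)(1-\alpha_n)$ with $\theta_n = t(n)/r(n)$, the trace inequality is linear in $\alpha_n$, and its two endpoints reduce to the pair of conditions
\[
a > s(n) + \rho\,t(n) \qquad \text{and} \qquad b > \rho\,(r(n) - t(n)).
\]
For the Cuntz obstruction $\tilde a \not\precsim \tilde b$, I would run the Chern-class argument of Corollary~\ref{C_7808_BigRank} adapted to $\tilde a$: the same inductive computation as in Lemma~\ref{L_7906_RankB} shows that the restriction of $\Gamma_{m,n}(\tilde a)$ to $Z_m$ still contains the section space of $L^{\times s(m)}$, so if $\tilde a \precsim \tilde b$ were realized at level $m$, Lemma~\ref{Lemma:Chern-class} would force the amplified $X_m$-rank $a_m$ of $\Gamma_{m,n}(\tilde b)$ to satisfy $a_m \geq 2 s(m)$.

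The main obstacle is to verify that the two trace lower bounds on $(a, b)$ are compatible with the amplified rank bound $a_m < 2s(m)$ at every level $m \geq n$. Because the connecting maps are diagonal, the recursion $(a_{m+1}, b_{m+1}) = (d(m+1) a_m + k(m+1) b_m,\ k(m+1) a_m + d(m+1) b_m)$ preserves the normalized sum $(a_m + b_m)/r(m)$ and contracts the normalized difference $(a_m - b_m)/r(m)$ by factors $1 - 2k(j)/l(j)$, whose infinite product is positive thanks to the summability in Construction~\ref{Cn_6918_General}(\ref{Cn_6918_General_rrp}). Combined with the estimates on $s(n)/r(n)$ and $t(n)/r(n)$ from Lemmas~\ref{L_6922_Noninc} and~\ref{L_6922_Rho} and with the hypothesis $2\kappa - 1 > 2\omega$ from Construction~\ref{Cn_6918_General}(\ref{Cn_6918_General_kpom}), this opens up an admissible window for $a$ in $(s(n) + \rho t(n),\ 2s(n))$ whose width is asymptotically proportional to $(2\kappa - 1 - 2\rho\omega)\,r(n)$, and which is therefore nonempty precisely under the hypothesis $\rho < (2\kappa-1)/(2\omega)$. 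The most delicate part is ensuring that the Chern bound $a_m < 2s(m)$ is maintained at every intermediate level $m$, and not merely in the asymptotic limit; this is where the contraction estimate on $(a_m - b_m)/r(m)$ together with the decay of $s(m)/r(m)$ to $\kappa$ must be tracked uniformly in~$m$.
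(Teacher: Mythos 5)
Your proposal runs on the same engine as the paper's proof: violate $\rho$-comparison by pitting a Bott-type element, whose restriction to $Z_m$ contains the section space of $L^{\times s(m)}$ at every level $m$ (so that the Chern-class bound of Lemma~\ref{Lemma:Chern-class}, via the argument of Corollary~\ref{C_7808_BigRank}, forces any comparing projection to have $X_m$-rank at least $2s(m)$), against a projection with prescribed constant ranks over $X_n$ and $Y_n$, and verify the trace inequality by observing it is affine in the $X_n$-mass and checking the two endpoints using Lemmas~\ref{L_7816_Rankq} and~\ref{L_6922_Rho}. You deviate in two ways, both sound. First, you build the Bott-type element inside the corner at level $n$ (rank $s(n)$, bundle $L^{\times s(n)}$), whereas the paper compares against the global element $b$ of Notation~\ref{N_7806_Bott}, which does not lie in the corner and whose $X_n$-rank bound is $r(n)-t(n)$ rather than $s(n)$; this is why your endpoint conditions are $a > s(n)+\rho t(n)$ and $b > \rho(r(n)-t(n))$, while the paper takes $N_1/r(n) > 1-\omega+2\rho\omega$ and $N_2=\rho N_1$. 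Your variant is in fact slightly stronger: carried out, it gives a lower bound closer to $\kappa/(2\omega)$ than to $(2\kappa-1)/(2\omega)$, which of course still proves the lemma. Second, you control the pushed-forward $X_m$-rank by the exact recursion $(a_{m+1},b_{m+1})=(d a_m+k b_m,\ k a_m+d b_m)$, with conserved normalized sum and difference contracted by $\prod_j(1-2k(j)/l(j))$, where the paper instead counts that at most $r(m)/r(n)-s(m)/s(n)$ of the diagonal summands come from the $Y$-side and uses the near-constancy of $s/r$; the two bookkeeping devices are equivalent, and yours is arguably cleaner. One quantitative remark at the end is off, though harmlessly so: the window for $a$ is $(s(n)+\rho t(n),\,2s(n))$, of width roughly $(\kappa-\rho\, t(n)/r(n))\,r(n)\geq(\kappa-2\rho\omega)r(n)$, not $(2\kappa-1-2\rho\omega)r(n)$, and it is nonempty whenever $\rho<\kappa/(2\omega)$, so the hypothesis $\rho<(2\kappa-1)/(2\omega)$ is sufficient but not ``precisely'' the condition; likewise the coupled asymptotic constraint on $(a,b)$ is met with margin for large $n$ because $\sum_j k(j)/l(j)<\infty$ makes the contraction factor tend to $1$ and $t(n)/r(n)\leq\omega+\omega'<2\omega$. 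With those details filled in, your sketch closes.
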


\begin{proof}
We proceed as in the proof of Corollary~\ref{Cor-rc-C},
although the rank computations are somewhat more involved.
The difference is in the definition of $d_{\tau}$.
In this corner,
$d_{\tau}$ is normalized so that $d_{\tau} (q^{\perp}) = 1$
for all $\tau \in \T (C)$.
To avoid redefining the notation,
we will use $\tau$ to denote a tracial state on~$C$,
and therefore our dimension functions will be of the form
$a \mapsto d_{\tau} (a)/\tau (q^{\perp})$,
noting that $\tau (q^{\perp}) = d_{\tau}(q^{\perp})$
since $q^{\perp}$ is a projection.

It suffices to show that for all
$\rh \in \left (1, \frac{2 \kappa - 1}{2 \om} \right ) \cap \Q$,
we have $\rc (q^{\perp} C q^{\perp}) \geq \rh$.

Fix $\delta \in (0, \om)$ such that
\begin{equation}\label{Eq_7906_Star}
\rh < (1 - \delta) \left( \frac{2 \kappa - 1}{2 \om} \right) \, .
\end{equation}
Set
\begin{equation}\label{Eq_7906_DefEps}
\eps = \frac{\delta}{2 \rho (1 - \delta)} > 0.
\end{equation}
Since the sequence
$\left (\frac{s (n)}{r (n)} \right)_{n = 0, 1, 2, \ldots}$
is nonincreasing and converges to a nonzero limit~$\kappa$,
there exists $n_0 \in \Nz$
such that for all $n$ and $m$ with $m\geq n \geq n_0$, we have
\[
0 \leq 1 - \frac{r (n)}{s (n)} \cdot \frac{s (m)}{r (m)}
 < \eps
\, .
\]
This implies that
\begin{equation}\label{Eq_7906_epsEst}
\frac{r (m)}{r (n)} - \frac{s (m)}{s (n)}
 < \eps \cdot \frac{r (m)}{r (n)}
\, .
\end{equation}
Using~(\ref{Eq_7906_Star}) and
$\delta < \om$ at the first step,
we get
\[
1 - \om + 2 \rh \om
 < 1 - \dt
    + 2 (1 - \delta) \left( \frac{2 \kappa - 1}{2 \om} \right) \om
 = 2 \kp (1 - \dt) \, .
\]

Now write $\rh = \af / \bt$
with $\af, \bt \in \N$.
Choose $n \geq n_0$ such that
\[
\frac{\bt}{r (n)} < 2 \kp (1 - \dt) - (1 - \om + 2 \rh \om) \, .
\]
Then there exists $N_1 \in \N$
such that $\rh N_1 \in \N$ and
\begin{equation}\label{Eq_7906_DefM1}
2 \kappa (1 - \delta) > \frac{N_1}{r (n)} > 1 - \om + 2 \rh \om
\, .
\end{equation}
Set
\begin{equation}\label{Eq_8Y28_NL}
N_2 = \rh N_1 \, .
\end{equation}
Using $\rh > 1$ at the last step,
we have
\[
\frac{N_2}{r (n)}
 = \frac{\rh N_1}{r (n)}
 > \rh (1 - \om + 2 \rh \om)
 > \rh (1 - \om) + 2 \om \, .
\]

Now suppose
$e \in M_{\infty} (C_n)
   = M_{\infty} \bigl( C (X_n) \oplus C (Y_n) \bigr)$
is an ordered pair whose first component
is a trivial projection on $X_n$ of rank $N_1$
and whose second component
is a (trivial) projection on $Y_n$ of rank $N_2$.
Let $m > n$,
and let $f$ be the first component of $\tgamma_{m, n} (e)$;
we estimate $\rank (f)$.
(The
second component is a trivial projection over $Y_m$
whose rank we don't care about.)
Now $f$ is the direct sum of $r (m) / r (n)$ trivial projections,
coming from $C (X_n, M_{r (n)})$ and $C (Y_n, M_{r (n)})$.
At least $s (m) / s (n)$ of these summands come from
$C (X_n, M_{r (n)})$.
So at most $r (m) / r (n) - s (m) / s (n)$
of these summands come from
$C (Y_n, M_{r (n)})$.
The summands coming from $C (X_n, M_{r (n)})$ have rank~$N_1$
and the summands coming from $C (Y_n, M_{r (n)})$ have rank~$N_2$.
Since $N_2 > N_1$,
we get
\begin{align*}
\rank (f)
& \leq \left( \frac{r (m)}{r (n)} - \frac{s (m)}{s (n)} \right) N_2
       + \frac{s (m)}{s (n)} \cdot N_1
\\
& = \frac{r (m)}{r (n)} \cdot N_1
     + \left (\frac{r (m)}{r (n)} - \frac{s (m)}{s (n)} \right )
       (N_2 - N_1) \, .
\end{align*}
Combining this with~(\ref{Eq_7906_epsEst}) at the first step,
and using (\ref{Eq_8Y28_NL}) at the second step,
(\ref{Eq_7906_DefEps}) at the third step,
(\ref{Eq_7906_DefM1}) at the fifth step,
and
Construction \ref{Cn_6918_General}(\ref{Cn_6918_General_Size})
at the sixth step,
we get
\begin{align*}
\rank (f)
& < \frac{r (m)}{r (n)} \cdot (N_1 + \eps N_2)
  = \frac{r (m)}{r (n)} \cdot (1 + \eps \rh) \cdot N_1
\\
& = \frac{r (m)}{r (n)} \cdot \frac{2 - \dt}{2 (1 - \dt)} \cdot N_1
  < \frac{r (m)}{r (n)} \cdot \frac{N_1}{1 - \delta}
  < 2 \kp r (m)
  \leq 2 s (m)
\, .
\end{align*}
So Corollary~\ref{C_7808_BigRank}
implies that there is no $x \in M_{\infty} (C_m)$
for which $\|x \tgamma_{n, m} (e) x^* - b_m \| < \frac{1}{2}$.
Since $m > n$ is arbitrary,
\begin{equation}\label{Eq_8Y29_2NL}
\tgamma_{\infty, n} (e)  \not\precsim b
\, .
\end{equation}

Now let $\tau$ be a trace on $C$,
and restrict it to
$C_n \cong M_{r (n)} \big( C (X_n) \oplus C (Y_n) \big)$.
Denote by $\tr$ the normalized trace on $M_{r (n)}$.
There is a probability measure $\mu$ on $X_n \amalg Y_n$
such that $\tau (a) = \int_{X_n \amalg Y_n} \tr (a) \, d \mu$
for all $a \in C_n$.
Define $\lambda = \mu (X_n)$, so $1 - \lambda = \mu (Y_n)$.
Then, using (\ref{Eq_8Y28_NL}) at the second step,
\[
\tau (e)
 = \frac{\lambda N_1 + (1 - \lambda) N_2}{r (n)}
 = \frac{[\lambda + \rh (1 - \lambda)] N_1}{r (n)}
\, .
\]

Using Lemma~\ref{L_7816_Rankq}
to calculate the ranks of the components of $q_n^{\perp}$,
we get
\begin{equation}\label{Eq_7910_tauqnperp}
\tau (q_n^{\perp})
 = \frac{\lambda t (n) + (1 - \lambda) [r (n) - t (n)]}{r (n)}
\end{equation}
and
\begin{equation}\label{Eq_7910_tauqn}
\tau (q_n)
 = 1 - \tau (q_n^{\perp})
 = \frac{\lambda [r (n) - t (n)] + (1 - \lambda) t (n)}{r (n)}
\, .
\end{equation}
It follows from Lemma~\ref{L_7906_RankB}
and Lemma~\ref{L_7816_Rankq}
that $d_{\tau} (b_n) \leq \tau (q_n)$.
Using this at the first step,
and (\ref{Eq_7910_tauqnperp}) and~(\ref{Eq_7910_tauqn})
at the second step,
we get
\[
\frac{d_{\tau} (b_n)}{\tau (q_n^{\perp})}
 \leq \frac{\tau (q_n)}{\tau (q_n^{\perp})}
 = \frac{\lambda [r (n) - t (n)]
   + (1 - \lambda) t (n)}{\lambda t (n) + (1 - \lambda) [r (n) - t (n)]}
\, .
\]
So
\[
\frac{\tau (e) -  d_{\tau} (b_n)}{\tau (q_n^{\perp})}
 \geq \frac{\big( \lambda + \rh (1 - \lambda) \big) N_1
  - \big( \lambda [r (n) - t (n)]
   + (1 - \lambda) t (n) \big)}{
  \lambda t (n) + (1 - \lambda) [r (n) - t (n)]}
\, .
\]

The last expression is a fractional linear function in
$\lambda$,
and is defined for all values of $\lambda$ in the interval $[0, 1]$.
Any such function is monotone on $[0, 1]$.
In the following calculations,
we recall from Lemma~\ref{L_6922_Rho}
that $\omega \leq \frac{t (n)}{r (n)} < 2 \omega$.
If we set $\lambda = 1$ and use~(\ref{Eq_7906_DefM1}),
the value we obtain is
\[
\frac{N_1 / r (n) - (1 - t (n)/r (n))}{t (n)/r (n)}
 > \frac{(1 - \om + 2 \rh \om)
        - (1 - \om)}{2 \om}
 = \rh \, .
\]
If we set $\lambda = 0$, we get,
using~(\ref{Eq_7906_DefM1}) at the first step
and $\rh > 1$ at the last step,
\[
\frac{\rh N_1/r (n) - t (n)/r (n)}{1 - t (n)/r (n)}
 > \frac{\rh (1 - \om + 2 \rh \om) - 2 \om}{
    1 - \om}
 = \rh + \frac{2 \rh^2 \om - 2 \om}{1 - \om} > \rh \, .
\]
Therefore
\[
\frac{d_{\tau} (\tgamma_{\I, n} (e) )}{d_{\tau} (q^{\perp})}
 > \frac{d_{\tau} (b)}{d_{\tau} (q^{\perp})} + \rh
\]
for all traces $\tau$ on $C$,
so $\rc (q^{\perp} C q^{\perp}) > \rh$, as required.
\end{proof}

We now turn to the issue of finding upper bounds
on the radius of comparison.
For this, we appeal to results of Niu from \cite{niu-mean-dimension}.
Niu introduced a notion of mean dimension
for a diagonal AH-system, \cite[Definition 3.6]{niu-mean-dimension}.
Suppose we are given a direct system
of homogeneous algebras of the form
\[
A_n = C (K_{n, 1}) \otimes M_{j_{n, 1}}
  \oplus C (K_{n, 2}) \otimes M_{j_{n, 2}} \oplus
   \cdots \oplus C (K_{n, m (n)}) \otimes M_{j_{n, m (n)}} \, ,
\]
in which each of the spaces involved is a connected finite CW~complex,
and the connecting maps are unital diagonal maps.
Let $\gamma$ denote the mean dimension of this system,
in the sense of Niu.
It follows trivially from \cite[Definition 3.6]{niu-mean-dimension} that
\[
\gamma
 \leq \lim_{n \to \infty} \max
  \left (\left \{ \frac{\dim (K_{n, l})}{j_{n, l}}
    \mid l = 1, 2, \ldots, m (n) \right \} \right ) \, .
\]
Theorem~6.2 of \cite{niu-mean-dimension} states that
if $A$ is the direct limit of a system as above,
and $A$ is simple,
then $\rc (A) \leq \gamma/2$.
Since the system we are considering here is of this type,
Niu's theorem applies.
With that at hand, we can derive an upper bound
for the radius of comparison of the complementary corner.

\begin{Lemma}\label{Lemma:rc-upper-bound}
Under the same assumptions as in Lemma~\ref{Lemma:rc_lower_bound},
we have
\[
\rc (q C q) \leq \frac{1}{1 - 2 \om} \, .
\]
\end{Lemma}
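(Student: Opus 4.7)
The plan is to realize $qCq$ as a simple diagonal AH algebra in the sense of Niu (\cite{niu-mean-dimension}), estimate its mean dimension, and then invoke Niu's theorem $\rc \le \gamma / 2$. First, by Lemma~\ref{L_7816_Rankq}, the projection $q_n \in C_n$ is constant of rank $r(n) - t(n)$ on $X_n$ and constant of rank $t(n)$ on $Y_n$, so
\[
q_n C_n q_n \cong M_{r (n) - t (n)} (C (X_n)) \oplus M_{t (n)} (C (Y_n)),
\]
and $q C q = \dirlim q_n C_n q_n$. Examining the explicit formula~(\ref{Eq_7830_Star}) for $\gm_n$ (together with the fact that $\dt_n$ is itself a diagonal map with coordinate functions $R_{n, j} \colon Y_{n + 1} \to Y_n$), the compression of $\Gm_{n + 1, n}$ to these corners is again diagonal: into the $X_{n + 1}$ summand there are $d (n + 1)$ copies pulled back along $Q_j^{(n)} \colon X_{n + 1} \to X_n$ and $k (n + 1)$ point evaluations at $y_n \in Y_n$; into the $Y_{n + 1}$ summand there are $d (n + 1)$ pieces pulled back along the $R_{n, j}$ and $k (n + 1)$ point evaluations at $x_n \in X_n$. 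Since each $X_n = \cone ((S^2)^{s (n)})$ is contractible and $Y_n = [0, 1]$, both are connected finite CW complexes, so this is a diagonal AH system in Niu's sense. Simplicity of $q C q$ follows from simplicity of $C$ (Lemma~\ref{L_0725_Simplicity}).

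With the system in hand, Theorem~6.2 of~\cite{niu-mean-dimension} gives $\rc (q C q) \leq \gm / 2$, where $\gm$ is the mean dimension of the system. The trivial estimate from \cite[Definition 3.6]{niu-mean-dimension} yields
\[
\gm \leq \limsup_n \max \left( \frac{\dim (X_n)}{r (n) - t (n)}, \, \frac{\dim (Y_n)}{t (n)} \right)
= \limsup_n \max \left( \frac{2 s (n) + 1}{r (n) - t (n)}, \, \frac{1}{t (n)} \right).
\]
The second term tends to $0$ since $t (n) \geq \om r (n) \to \infty$ by Lemma~\ref{L_6922_Rho} and $r (n) \to \I$. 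For the first, divide by $r (n)$ in numerator and denominator: the numerator $2 s (n) / r (n) + 1 / r (n)$ is bounded above by $2 + 1 / r (n) \to 2$ (since $s (n) \leq r (n)$), while by Lemma~\ref{L_6922_Rho} the denominator satisfies $1 - t (n) / r (n) > 1 - 2 \om$. Taking the limit superior gives $\gm \leq \frac{2}{1 - 2 \om}$, hence $\rc (q C q) \leq \frac{1}{1 - 2 \om}$.

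The only real obstacle is step one: verifying that compression to $q_n$ produces a \emph{unital diagonal} AH system with connected CW-complex base spaces as required by Niu, and that the compressed maps are correctly identified from the formula~(\ref{Eq_7830_Star}). Once that bookkeeping is in place, the mean-dimension estimate is immediate from the crude rank-to-dimension ratio bound, and the only quantitative inputs are the upper bound $t (n) / r (n) < 2 \om$ from Lemma~\ref{L_6922_Rho} and the trivial bound $s (n) \leq r (n)$.
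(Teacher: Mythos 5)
Your proposal is correct and follows essentially the same route as the paper: compress to the corner via Lemma~\ref{L_7816_Rankq}, observe the compressed system is a simple unital diagonal AH system, and apply Niu's bound $\rc \leq \gm/2$ with the crude dimension-over-rank estimate, using $\dim (X_n) = 2 s (n) + 1$, $s (n) \leq r (n)$, and $t (n)/r (n) < 2 \om$ from Lemma~\ref{L_6922_Rho}. The only difference is that you spell out the bookkeeping for the compressed diagonal maps, which the paper leaves implicit.
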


\begin{proof}
The algebra $C$ is simple by Lemma~\ref{L_0725_Simplicity},
so $q C q$ is also simple.
This fact and Lemma~\ref{L_7816_Rankq}
allow us to apply the discussion above,
getting
\[
\rc (q C q)
 \leq \frac{1}{2} \lim_{n \to \infty}
   \max \left ( \frac{\dim (X_n)}{\rank (q_n |_{X_n})}, \,
    \frac{\dim (Y_n)}{\rank ( q_n |_{Y_n} )} \right )\, .
\]
As $\dim (Y_n) = 1$ for all $n$, the second term converges to~$0$.
As for the first term, by
Construction
\ref{Cn_6918_General_Part_1a}(\ref{Cn_6918_General_1a_Xn}),
we have
$\dim (X_n) = 2 s (n) + 1$.
Also, $\rank (q_n |_{X_n}) = r (n) - t (n)$
by Lemma~\ref{L_7816_Rankq}.
Thus,
using Construction \ref{Cn_6918_General}(\ref{Cn_6918_General_dn}),
Lemma~\ref{L_6922_Rho},
and $d (n) \to \I$
(which follows from
Construction \ref{Cn_6918_General}(\ref{Cn_6918_General_rrp}))
at the last step,
\[
\lim_{n \to \infty} \frac{\dim (X_n)}{\rank (q_n |_{X_n})}
 = \lim_{n \to \infty} \frac{2 s (n) + 1}{r (n) - t (n)}
 \leq \lim_{n \to \infty}  \frac{2 r (n) + 1}{r (n) - t (n)}
 \leq \frac{2}{1 - 2 \om}\, .
\]
This gives us the required estimate.
\end{proof}

\begin{Lemma}\label{lemma-stable-rank-1}
Let the assumptions and notation be as in
Notation~\ref{notation-q-q-perp},
Construction
\ref{Cn_6918_General_Part_1a}(\ref{Cn_6918_General_1a_Xn}),
and Construction
\ref{Cn_6918_General_Part_1a}(\ref{Cn_6918_General_1a_YnIs01}).
If $e \in C$ is a projection which has the same same $K_0$-class as $q$
then $e$ is unitarily equivalent to $q$.
The same holds with $q^{\perp}$ in place of $q$.
\end{Lemma}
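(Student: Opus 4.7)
The plan is to show that $C$ has stable rank one, whereupon the conclusion follows from standard cancellation and unitary-equivalence results for projections in stable rank one $C^*$-algebras.

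First I would verify that $\tsr(C) = 1$. Each building block $C_n = M_{r(n)} \otimes (C(X_n) \oplus C(Y_n))$ is homogeneous over the contractible spaces $X_n = \cone(Z_n)$ and $Y_n = [0, 1]$, so $K_1(C_n) = 0$, and therefore $K_1(C) = 0$. All connecting maps are diagonal (Construction~\ref{Cn_6918_General}(\ref{Cn_6918_General_Diag})), and crucially many of the diagonal summands of $\gm_n$ are point evaluations, as exhibited in~(\ref{Eq_7830_Star}). Given $a \in C_n$ and $\eps > 0$, for $m$ sufficiently large the image $\Gm_{m, n}(a)$ has a block-diagonal decomposition in which many of the blocks are constant matrix-valued functions on $X_m \amalg Y_m$; each such constant block can be perturbed within its $M_{r(m)}$ factor to an invertible matrix, while the remaining blocks (defined on the contractible $X_m$ or on $Y_m = [0,1]$) can be handled using $K_1(C_m) = 0$ together with the standard one-dimensional perturbation argument on $Y_m$. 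Assembling these perturbations yields an invertible in $C_m$ within $\eps$ of $\Gm_{m, n}(a)$, which proves $\tsr(C) = 1$.

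Once $\tsr(C) = 1$ is established, cancellation of projections holds in $C$, so $[e] = [q]$ in $K_0(C)$ forces Murray-von Neumann equivalence $e \sim q$ in $C$, giving $v \in C$ with $v^* v = e$ and $v v^* = q$. The standard polar-decomposition argument in a stable rank one algebra then promotes this partial isometry to a unitary $u \in C$ with $u e u^* = q$. The identical argument with $q^{\perp}$ in place of $q$ gives the second half of the lemma.

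The main obstacle is the verification of $\tsr(C) = 1$. Because $\dim(X_n)$ grows rapidly relative to $r(n)$, slow-dimension-growth techniques are unavailable, and the proof must rest on the diagonal structure of $\gm_n$ together with the presence of the constant (point-evaluation) coordinates $f \mapsto f(x_n)$ and $g \mapsto g(y_n)$ in~(\ref{Eq_7830_Star}), which reduce a portion of the approximation problem to a perturbation inside a constant matrix algebra where invertible approximants are straightforward to produce.
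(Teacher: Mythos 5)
Your overall route (cancellation from stable rank one) is legitimate in principle, but your verification of $\tsr(C)=1$ contains a genuine gap, and it is exactly the hard point. After applying $\Gm_{m,n}$, the block-diagonal picture of $\Gm_{m,n}(a)$ does reduce the approximation problem to the individual blocks, and the constant (point-evaluation) blocks are indeed harmless. However, the blocks that never pass through a point evaluation are of the form $f\circ\bigl(Q^{(n)}_{\nu_1}\circ\cdots\circ Q^{(m-1)}_{\nu_{m-n}}\bigr)$: they are genuinely non-constant elements of $C(X_m, M_{r(n)})$, where $\dim X_m = 2s(m)+1$ vastly exceeds the matrix size $r(n)$, and their proportion among all blocks is at least $s(m)r(n)/\bigl(r(m)s(n)\bigr)\geq\kp>\tfrac12$, so they do not become negligible as $m\to\infty$. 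Your proposed treatment of these blocks --- ``handled using $K_1(C_m)=0$'' --- does not work: triviality of $K_1$ (equivalently, contractibility of $X_m$) gives no invertible-approximation property in high dimensions. Already $C(D^2)$ has trivial $K_1$ but stable rank $2$ (the coordinate function $z$ cannot be approximated by invertibles, by a winding-number argument on the boundary), and the same obstruction persists for $C(X_m,M_{r(n)})$ since $\dim X_m \gg 2r(n)$. Indeed, the whole construction is designed so that dimension growth is fast relative to matrix size, which is why no naive blockwise or slow-dimension-growth argument can establish $\tsr(C)=1$. The correct way to follow your route is to quote \cite[Theorem 4.1]{elliott-ho-toms}: simple unital AH algebras arising from systems with diagonal connecting maps have stable rank one (a substantial theorem whose proof is nothing like the sketch you give), and then Rieffel's cancellation \cite[Theorem 6.5.1]{blackadar-k-theory}; the paper explicitly notes this as an alternative derivation of the lemma.

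Note also that the lemma does not need stable rank one at all. The paper's own proof is a short direct argument: since $X_n$ and $Y_n$ are contractible, a projection in $M_\infty(C_n)$ is determined up to unitary equivalence by its $K_0$-class, so a projection $e\in C$ with $[e]=[q]$ can be pulled back (up to a small perturbation and unitary equivalence) to a finite stage, where its class agrees with that of $q_m$ for some $m$, whence it is unitarily equivalent to $q_m$ there, and this equivalence is pushed into $C$; the same argument applies to $q^{\perp}$. If you want a self-contained proof, that is the argument to write out, rather than attempting to prove $\tsr(C)=1$ by hand.
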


\begin{proof}
This can be seen directly from the construction.
For each $n \in \Nz$,
since $X_n$ and $Y_n$
are contractible
(Construction
\ref{Cn_6918_General_Part_1a}(\ref{Cn_6918_General_1a_Xn})
and Construction
\ref{Cn_6918_General_Part_1a}(\ref{Cn_6918_General_1a_YnIs01})),
if $e \in M_{\infty} (C_n)$ is a projection which
has the same $K_0$-class as $q$,
then $e$ is actually unitarily equivalent to $q_n$.
The same holds for $q_n^{\perp}$.
It follows that this is the case in $C$ as well.
\end{proof}

We point out that this lemma can also be deduced using cancellation.
By \cite[Theorem 4.1]{elliott-ho-toms},
simple unital AH algebras
which arise from AH systems with diagonal maps
have stable rank~$1$.
Rieffel has shown that $C^*$-algebras
with stable rank $1$ have cancellation;
see \cite[Theorem 6.5.1]{blackadar-k-theory}.

\section{The tracial state space}\label{Sec_Traces}

For a compact Hausdorff space~$X$,
we will need all of $C (X, \R)$
(the space of real valued \cfn{s} on~$X$),
the tracial state space of $C (X)$
(and of $C (X, M_n)$),
and the space of affine functions on the tracial state space.
This last space is an order unit space,
and much of our work will be done there.

For later reference,
we recall some of the definitions,
and then describe how to move between these spaces.
We begin with the definition of an order unit space
from the discussion before Proposition II.1.3 of~\cite{Alf}.
We suppress the order unit in our notation,
since (except in several abstract results)
our order unit spaces will always be sets of
affine \cfn{s} on compact convex sets with order unit
the constant function~$1$.

\begin{Def}\label{D_6919_OrUnitSp}
An {\emph{order unit space}}~$V$
is a partially ordered real Banach space
(see page~1 of~\cite{Gdr0} for the axioms of a
partially ordered real vector space)
which is {\emph{Archimedean}}
(if $v \in V$ and $\{ \ld v \mid \ld \in (0, \infty) \}$
has an upper bound, then $v \leq 0$),
with a distinguished element $e \in V$
which is an {\emph{order unit}}
(that is,
for every $v \in V$ there is $\ld \in (0, \infty)$
such that $- \ld e \leq v \leq \ld e$),
and such that the norm on~$V$ satisfies
\[
\| v \|
= \inf \big(
 \big\{ \ld \in (0, \infty) \mid
     - \ld e \leq v \leq \ld e \big\} \big)
\]
for all $v \in V$.

The morphisms of order unit spaces are
the positive linear maps which preserve the order units.
\end{Def}

The morphisms of compact convex sets
(compact convex subsets of locally convex topological vector spaces)
are just the continuous affine maps.

\begin{Def}\label{N_7701_Aff}
If $K$ is a compact convex set,
we denote by $\Aff (K)$
the order unit space of \ct{} affine functions $f \colon K \to \R$,
with the supremum norm
and with order unit the constant function~$1$.

If $K$ and $L$ are compact convex sets
and $\ld \colon K \to L$
is continuous and affine,
we let $\ld^* \colon \Aff (L) \to \Aff (K)$
be the positive linear order unit preserving map
given by $\ld^* (f) = f \circ \ld$
for $f \in \Aff (L)$.
\end{Def}

This definition makes $K \mapsto \Aff (K)$ a functor.

\begin{Def}\label{N_7701_StateSpace}
If $V$ is an order unit space with order unit~$e$,
we denote by $S (V)$ (or $S (V, e)$ if $e$ is not understood)
its state space
(the order unit space morphisms to $(\R, 1)$),
which is a compact convex set with the weak* topology.

If $W$ is another order unit space and
$\ph \colon V \to W$
is positive, linear, and order unit preserving,
we let $S (\ph) \colon S (W) \to S (V)$
be the continuous affine map
given by $S (\ph) (\om) = \om \circ \ph$ for $\om \in S (W)$.
\end{Def}

This definition makes $V \mapsto S (V)$ a functor.

\begin{Thm}[Theorem~7.1 of~\cite{Gdr0}]\label{T_7701_AffKToK}
There is a natural isomorphism $S (\Aff (K)) \cong K$
for compact convex sets~$K$,
given by sending $x \in K$ to the evaluation map
$\ev_x \colon \Aff (K) \to \R$
defined by $\ev_x (f) = f (x)$ for $f \in \Aff (K)$.
\end{Thm}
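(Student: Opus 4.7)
The plan is to define $\Phi \colon K \to S (\Aff (K))$ by $\Phi (x) = \ev_x$, verify that this is a continuous affine bijection, and note that naturality is automatic from the definition. That $\ev_x$ is actually a state is immediate: it is linear in $f$, sends the order unit $1$ to $1$, and is positive because $f \geq 0$ in $\Aff (K)$ means $f (x) \geq 0$ pointwise. That $\Phi$ is affine is clear, and it is continuous into the weak-$*$ topology because $x_\af \to x$ in $K$ forces $f (x_\af) \to f (x)$ for every $f \in \Aff (K)$.

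For injectivity I would invoke Hahn-Banach inside whatever locally convex space contains~$K$: if $x \neq y$, a real-linear continuous functional separating $\{x\}$ from $\{y\}$ restricts to a member of $\Aff (K)$ on which $\ev_x$ and $\ev_y$ disagree. Surjectivity is the main obstacle and will also use Hahn-Banach, but this time in the dual. Suppose $\om \in S (\Aff (K))$ is not in the image $\Phi (K)$, which is compact and convex in $S (\Aff (K))$ with its weak-$*$ topology. Then Hahn-Banach in the locally convex space $\Aff (K)^*$ separates $\om$ from $\Phi (K)$ by a weak-$*$-continuous linear functional. Every such functional is evaluation at some $f \in \Aff (K)$, so there exist $f \in \Aff (K)$ and $c \in \R$ with $\om (f) > c$ while $\ev_x (f) = f (x) \leq c$ for all $x \in K$. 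This says $f \leq c \cdot 1$ in the order of $\Aff (K)$; applying the positive unital map $\om$ gives $\om (f) \leq c$, a contradiction.

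Having a continuous affine bijection $\Phi$ from the compact Hausdorff space $K$ to the Hausdorff space $S (\Aff (K))$, I would conclude it is a homeomorphism and its inverse is automatically affine, so $\Phi$ is an isomorphism of compact convex sets. Naturality with respect to a continuous affine map $\ld \colon K \to L$ is a one-line check: for $x \in K$ and $g \in \Aff (L)$, one has $S (\ld^*) (\ev_x) (g) = \ev_x (g \circ \ld) = g (\ld (x)) = \ev_{\ld (x)} (g)$, so $S (\ld^*) \circ \Phi_K = \Phi_L \circ \ld$. The only genuinely nontrivial input is the Hahn-Banach separation step in surjectivity, and identifying the weak-$*$-continuous functionals on $\Aff (K)^*$ with evaluations at elements of $\Aff (K)$.
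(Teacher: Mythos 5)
The paper does not prove this statement at all: it is quoted as Theorem~7.1 of~\cite{Gdr0}, so there is no internal argument to compare against. Your proof is correct and is essentially the standard one behind that citation: the evaluation map is an affine continuous injection by Hahn--Banach separation of points of $K$ in the ambient locally convex space, surjectivity follows from separating a putative missing state from the compact convex image $\{ \ev_x \mid x \in K \}$ by a weak-$*$ continuous functional (i.e.\ by some $f \in \Aff (K)$) and contradicting positivity of the state, compactness upgrades the bijection to an affine homeomorphism, and the naturality computation $S (\ld^*) (\ev_x) = \ev_{\ld (x)}$ is exactly as you wrote it.
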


\begin{Def}\label{N_6919_Traces}
For a unital C*-algebra~$A$,
we denote its tracial state space by $\T (A)$.

If $A$ and $B$ are unital \ca{s}
and $\ph \colon A \to B$ is a unital \hm,
we let $\T (\ph) \colon \T (B) \to \T (A)$
be the continuous affine map
given by $\T (\ph) (\ta) = \ta \circ \ph$
for $\ta \in \T (B)$.
We let ${\widehat{\ph}} \colon \Aff (\T (A)) \to \Aff (\T (B))$
be the positive order unit preserving map given by
${\widehat{\ph}} (f) = f \circ \T (\ph)$
for $f \in \Aff (\T (A))$.
(Thus, ${\widehat{\ph}} = \T (\ph)^*$.)
\end{Def}

\begin{Lemma}\label{L_7701_XToTCX}
Let $X$ be a compact Hausdorff space.
Then $C (X, \R)$,
with the supremum norm
and distinguished element the constant function~$1$,
is a complete order unit space.
Restriction of tracial states on $C (X)$
is an affine homeomorphism
from $\T (C (X))$ to $S (C (X, \R))$.
The map from $X$ to $S (C (X, \R) )$
which sends $x \in X$ to the point evaluation
$\ev_x \colon C (X, \R) \to \R$
is a homeomorphism onto its image,
and the map $R_X \colon \Aff \big( S (C (X, \R)) \big) \to C (X, \R)$,
given by $R_X (f) (x) = f (\ev_x)$
for $f \in \Aff \big( S (C (X, \R)) \big)$ and $x \in X$,
is an isomorphism of order unit spaces.

If $Y$ is another compact Hausdorff space,
then the function which sends a positive linear order unit preserving
map $Q \colon C (X, \R) \to C (Y, \R)$
to $S (Q) \colon S (C (Y, \R)) \to S (C (X, \R))$,
as in Definition~\ref{N_7701_StateSpace},
is a bijection
to the continuous affine maps from $S (C (Y, \R))$ to $S (C (X, \R))$.
Its inverse is the map $E$ given as follows.
For a continuous affine map
$\ld \colon S (C (Y, \R)) \to S (C (X, \R))$,
using the notation of Definition~\ref{N_7701_Aff},
define $E (\ld) \colon C (X, \R) \to C (Y, \R)$
by $E (\ld) = R_Y \circ \ld^* \circ R_X^{-1}$.
\end{Lemma}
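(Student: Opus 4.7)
The plan is to dispatch the four assertions in order, treating them as a package of standard facts from the duality theory of order unit spaces and compact convex sets. The only substantive input is Kadison's representation theorem for complete order unit spaces; everything else is formal bookkeeping with the naturality of the relevant functors.

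First I would verify that $C(X,\R)$ is a complete order unit space in the sense of Definition~\ref{D_6919_OrUnitSp}. Completeness under the sup norm is standard, the pointwise order is Archimedean (if $nf \leq g$ for all $n \in \N$ then $f(x) \leq g(x)/n \to 0$, so $f \leq 0$), the constant function~$1$ is an order unit since $-\|f\|_\infty\cdot 1 \leq f \leq \|f\|_\infty\cdot 1$, and the norm formula is immediate from this. For the passage $\T(C(X)) \leftrightarrow S(C(X,\R))$: a tracial state on $C(X)$ restricts to a state on the self-adjoint order unit space; conversely, a state on $C(X,\R)$ extends $\C$-linearly to a positive unital linear functional on $C(X)$, and this is automatically tracial because $C(X)$ is commutative. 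The resulting bijection is affine and weak*-continuous in both directions, hence an affine homeomorphism of compact convex sets. For $x \mapsto \ev_x$, continuity reduces to continuity of each $f \in C(X,\R)$, injectivity follows from Urysohn's lemma (real continuous functions separate points of $X$), and a continuous injection from a compact space into a Hausdorff space is a homeomorphism onto its image.

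The key step is the identification $R_X$. Here I would invoke Kadison's representation theorem, which states that for any complete order unit space $V$ the canonical map $\iota_V \colon V \to \Aff(S(V))$ given by $\iota_V(v)(\omega) = \omega(v)$ is an isomorphism of order unit spaces. Applied to $V = C(X,\R)$, which is complete by the first step, this yields an order unit space isomorphism onto $\Aff(S(C(X,\R)))$. A direct computation shows
\[
R_X(\iota_{C(X,\R)}(f))(x) = \iota_{C(X,\R)}(f)(\ev_x) = \ev_x(f) = f(x),
\]
so $R_X \circ \iota_{C(X,\R)} = \id_{C(X,\R)}$, and $R_X$ is therefore the inverse of this isomorphism, hence itself an isomorphism of order unit spaces. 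This is the main obstacle in the whole lemma; the rest is formal.

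Finally, for the asserted bijection between unital positive linear maps $Q \colon C(X,\R) \to C(Y,\R)$ and continuous affine maps $\ld \colon S(C(Y,\R)) \to S(C(X,\R))$: given $Q$, the map $S(Q)$ is continuous and affine by Definition~\ref{N_7701_StateSpace}, while given $\ld$, the composition $E(\ld) = R_Y \circ \ld^* \circ R_X^{-1}$ is a composition of order unit space morphisms (via Definition~\ref{N_7701_Aff} and the preceding step), so it is positive, linear, and unit-preserving. The relation $E \circ S = \id$ follows from the naturality identity $\iota_Y \circ Q = (S(Q))^* \circ \iota_X$, since then $E(S(Q)) = R_Y \circ \iota_Y \circ Q \circ \iota_X^{-1} \circ R_X^{-1} = Q$ using $R_X \circ \iota_X = \id$ and likewise for $Y$. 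The relation $S \circ E = \id$ reduces, via Theorem~\ref{T_7701_AffKToK} identifying $S(C(X,\R))$ with the state space of $\Aff(S(C(X,\R)))$ through evaluation at points, to an affine map being determined by its values on evaluation functionals, which Theorem~\ref{T_7701_AffKToK} encodes exactly.
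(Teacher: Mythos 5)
Your proof is correct, but it takes a genuinely different route from the paper's at the two substantive points. For the bijectivity of $R_X$, the paper identifies $S (C (X, \R))$ with the probability measures on~$X$, identifies $X$ with the extreme points of that simplex, and cites \cite[Corollary 11.20]{Gdr0} (restriction to the extreme boundary); you instead invoke Kadison's function representation theorem, that for a \emph{complete} order unit space $V$ the canonical map $\iota_V \colon V \to \Aff (S (V))$, $\iota_V (v) (\om) = \om (v)$, is an isomorphism of order unit spaces, and observe that $R_X \circ \iota_{C (X, \R)} = \id$, so $R_X = \iota_{C (X, \R)}^{-1}$. This buys you something real: the identity $g (\rh) = \rh (R_X (g))$, which the paper must prove as a separate claim (its equation~(\ref{Eq_9X30_MoveRX})) by checking on point evaluations and using that $S (C (X, \R))$ is the closed convex hull of $\{ \ev_x \mid x \in X \}$, is a tautology in your setup, and then both composition identities are short naturality computations. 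Your argument for $E \circ S = \id$ is complete as written; for $S \circ E = \id$ the appeal to Theorem~\ref{T_7701_AffKToK} is not really the relevant tool (and not needed): just compute, for $\om \in S (C (Y, \R))$ and $f \in C (X, \R)$, that $S (E (\ld)) (\om) (f) = \om \bigl( R_Y ( \iota_{C (X, \R)} (f) \circ \ld ) \bigr) = \bigl( \iota_{C (X, \R)} (f) \circ \ld \bigr) (\om) = \ld (\om) (f)$, using $R_Y = \iota_{C (Y, \R)}^{-1}$ in the middle step. The paper's density-of-evaluations argument is what one needs if one only knows that $R_X$ is bijective without identifying its inverse as the canonical map; since you do identify it, your route is shorter, at the cost of importing Kadison's theorem, which the paper avoids by citing Goodearl's result on affine functions instead.
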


A positive linear order unit preserving map
from $C (X, \R)$ to $C (Y, \R)$
is called a {\emph{Markov operator}}.

\begin{proof}[Proof of Lemma~\ref{L_7701_XToTCX}]
It is immediate that $C (X, \R)$ is a complete order unit space.
The identification of $S (C (X, \R))$ is also immediate.
The fact that $R_X$ is bijective
follows from \cite[Corollary 11.20]{Gdr0}
using the identification of $X$
with the extreme points of $S (C (X, \R))$.

For the second paragraph, it is immediate
that $S$ sends positive linear order unit preserving maps
to continuous affine maps,
and that $E$ does the reverse.
For the rest,
we must show that $S \circ E$ and $E \circ S$
are the identity maps on the appropriate sets.

We first claim that for $g \in \Aff \big( S (C (X, \R)) \big)$
and $\rh \in S (C (X, \R))$ we have
\begin{equation}\label{Eq_9X30_MoveRX}
g (\rh) = \rh (R_X (g)) \, .
\end{equation}
This formula is true by definition when $\rh = \ev_x$ for some
$x \in X$.
Since, for fixed~$g$,
both sides of~(\ref{Eq_9X30_MoveRX}) are continuous affine
functions of~$\rh$,
and since $S (C (X, \R))$ is the closed convex hull
of $\{ \ev_x \mid x \in X \}$,
the claim follows.

We next claim that if
$\ld \colon S (C (Y, \R)) \to S (C (X, \R))$
is \ct{} and affine,
$\om \in S (C (Y, \R))$,
and $g \in \Aff \big( S (C (X, \R)) \big)$,
then
\begin{equation}\label{Eq_9X30_MoveLd}
(\om \circ R_Y) (g \circ \ld) = (\ld (\om) \circ R_X) (g) \, .
\end{equation}
To prove this claim,
for the same reasons as in the proof of the first claim,
it suffices to prove this when there is $y \in Y$
such that $\om = \ev_y$.
In this case,
using the definition of $R_Y$ at the second step
and the previous claim with $\rh = \ld (\ev_y)$ at the third step,
\[
(\ev_y \circ R_Y) (g \circ \ld)
 = R_Y (g \circ \ld) (y)
 = (g \circ \ld) (\ev_y)
 = (\ld (\ev_y) \circ R_X) (g) \, ,
\]
as desired.

Now let $\ld \colon S (C (Y, \R)) \to S (C (X, \R))$
be \ct{} and affine;
we prove that $S (E (\ld)) = \ld$.
Let $\om \in S (C (X, \R))$
and let $f \in C (Y, \R)$.
Working through the definitions gives
\[
S (E (\ld)) (\om) (f) = (\om \circ R_Y) ( R_X^{-1} (f) \circ \ld) \, .
\]
By (\ref{Eq_9X30_MoveLd}) with $g = R_X^{-1} (f)$,
the right hand side is $\ld (\om) (f)$,
as desired.

Finally, let $Q \colon C (X, \R) \to C (Y, \R)$
be a positive linear order unit preserving map;
we show that $E (S (Q)) = Q$.
Let $f \in C (X, \R)$ and let $y \in Y$.
Working through the definitions gives
\[
E (S (Q)) (f) (y) = R_X^{-1} (f) (\ev_y \circ Q) \, .
\]
Applying (\ref{Eq_9X30_MoveRX}) with $g = R_X^{-1} (f)$
and $\rh = \ev_y \circ Q$,
we see that the right hand side is $(\ev_y \circ Q) (f) = Q (f) (y)$.
This proves that $E (S (Q)) = Q$,
and the proof is complete.
\end{proof}

Direct limits of direct systems of order unit spaces
are constructed at the beginning of Section~3 of~\cite{thomsen},
including Lemma~3.1 there.

\begin{Prop}\label{P_6919_CommLim}
Let
$\big( (D_n)_{n = 0, 1, 2, \ldots},
( \ph_{n, m} )_{0 \leq m \leq n} \big)$
be a direct system of unital \ca{s}
and unital \hm{s}.
Set $D = \dirlim_{n} D_n$.
Then there are a natural homeomorphism
\[
\T (D) \to \invlim_n \T (D_n)
\]
and a natural isomorphism
\[
\Aff ( \T (D)) \to \dirlim_n \Aff (\T (D_n))
\]
of order unit spaces.
\end{Prop}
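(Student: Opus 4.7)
The plan is to first establish the homeomorphism at the level of tracial state spaces, and then obtain the order unit space isomorphism by applying the functor $\Aff(\cdot)$ together with the known interaction of this functor with inverse limits of compact convex sets, as developed in~\cite{thomsen}.

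For the first assertion, I would define $\Psi \colon \T(D) \to \invlim_n \T(D_n)$ by
\[
\Psi(\tau) = \big( \tau \circ \ph_{\infty, n} \big)_{n \geq 0}.
\]
The relation $\ph_{\infty, m} = \ph_{\infty, n} \circ \ph_{n, m}$ for $m \leq n$ shows that this family is compatible, so $\Psi$ lands in the inverse limit, and continuity in the weak*-topologies is immediate. Injectivity follows because $\bigcup_{n} \ph_{\infty, n}(D_n)$ is dense in~$D$, so two tracial states that agree on each $\ph_{\infty, n}(D_n)$ must coincide. For surjectivity, given a compatible family $(\tau_n)_{n}$ with $\tau_n \circ \ph_{n, m} = \tau_m$ for $m \leq n$, I would define a linear functional on the dense $*$-subalgebra $\bigcup_n \ph_{\infty, n}(D_n)$ by sending $\ph_{\infty, n}(a)$ to $\tau_n(a)$; compatibility makes this well-defined, and since each $\tau_n$ is a tracial state of norm~$1$, the resulting functional is contractive, positive, trace-preserving, and unital, so it extends by continuity to a tracial state on~$D$ that realizes the given family. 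Since both $\T(D)$ and $\invlim_n \T(D_n)$ are compact Hausdorff, the continuous bijection $\Psi$ is automatically a homeomorphism, and naturality with respect to morphisms of direct systems is immediate from the defining formula.

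For the second assertion, I would apply $\Aff(\cdot)$ to the homeomorphism just established. The general fact needed is that for an inverse system $(K_n, \ld_{n, m})$ of compact convex sets with continuous affine transition maps, $\Aff(\invlim_n K_n)$ is the direct limit of the order unit space system $(\Aff(K_n), \ld_{n, m}^*)$, with connecting maps $\Aff(K_m) \to \Aff(K_n)$ given by precomposition with $\ld_{n, m}$. This is precisely the construction and lemma at the beginning of Section~3 of~\cite{thomsen}. Specializing to $K_n = \T(D_n)$ and using Definition~\ref{N_6919_Traces} to identify $\T(\ph_{n, m})^* = \widehat{\ph}_{n, m}$ yields the required natural isomorphism $\Aff(\T(D)) \cong \dirlim_n \Aff(\T(D_n))$. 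I do not anticipate a serious obstacle; the main subtlety is verifying that the functional constructed in the surjectivity step of the first part is genuinely a tracial state, but this is automatic since positivity, the trace identity, and unitality are all preserved both by the pointwise definition on the dense subalgebra and by the continuous extension to~$D$.
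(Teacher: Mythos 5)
Your proposal is correct and follows essentially the same route as the paper, which simply cites Lemma~3.3 of~\cite{thomsen} for the homeomorphism $\T (D) \to \invlim_n \T (D_n)$ and Lemma~3.2 of~\cite{thomsen} together with Theorem~\ref{T_7701_AffKToK} for the $\Aff$ statement; your first paragraph just writes out the standard argument behind Thomsen's Lemma~3.3 (where, as you implicitly use, well-definedness on the dense union also needs the contractivity of the $\tau_n$, since the maps $\ph_{\infty, n}$ need not be injective). No gap.
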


\begin{proof}
The first part is Lemma~3.3 of~\cite{thomsen}.

The second part is Lemma~3.2 of~\cite{thomsen},
combined with the fact (Theorem~\ref{T_7701_AffKToK})
that the state
space of $\Aff (K)$ is naturally identified with $K$.
\end{proof}

\begin{Def}\label{D_6921_DSumOrdUn}
Let $V$ and $W$ be order unit spaces,
with order units $e \in V$ and $f \in W$.
We define the
{\emph{direct sum}}
$V \oplus W$ to be
the vector space direct sum $V \oplus W$ as a real vector space,
with the order $(v_1, w_1) \leq (v_2, w_2)$
for $v_1, v_2 \in V$ and $w_1, w_2 \in W$
if and only if $v_1 \leq v_2$ and $w_1 \leq w_2$,
with the order unit $(e, f)$,
and the norm $\| (v, w) \| = \max ( \| v \|, \, \| w \| )$.
\end{Def}

\begin{Lemma}\label{L_6921_DSumIsOU}
Let $V$ and $W$ be order unit spaces.
Then $V \oplus W$ as in Definition~\ref{D_6921_DSumOrdUn}
is an order unit space,
which is complete if $V$ and $W$ are.
\end{Lemma}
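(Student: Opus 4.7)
The plan is to check each clause of Definition~\ref{D_6919_OrUnitSp} in turn for $V \oplus W$ with the structure given in Definition~\ref{D_6921_DSumOrdUn}, exploiting the fact that both the order and the norm on $V \oplus W$ decouple completely along coordinates; the completeness claim will then follow from the same decoupling.

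First I would verify that $V \oplus W$ is a partially ordered real Banach space. The componentwise relation is a partial order, and compatibility with the real vector space operations is immediate from the same compatibility in each of $V$ and $W$. The function $(v, w) \mapsto \max(\|v\|, \|w\|)$ is a norm by the standard argument for product norms, and, also by a standard argument, a sequence $( (v_n, w_n) )_{n \in \N}$ is Cauchy (respectively convergent) in this norm if and only if $(v_n)$ is Cauchy (respectively convergent) in $V$ and $(w_n)$ is Cauchy (respectively convergent) in $W$. This yields completeness of $V \oplus W$ whenever $V$ and $W$ are complete, handling the second assertion of the lemma.

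Next I would verify the Archimedean property. Suppose $(v, w) \in V \oplus W$ and $\{ \ld (v, w) \mid \ld \in (0, \infty) \}$ has an upper bound $(v_0, w_0)$. Then componentwise $\{ \ld v \mid \ld > 0 \}$ is bounded above in $V$ by $v_0$ and $\{ \ld w \mid \ld > 0 \}$ is bounded above in $W$ by $w_0$. The Archimedean hypothesis on $V$ and $W$ then gives $v \leq 0$ and $w \leq 0$, so $(v, w) \leq (0, 0)$. Similarly, to show that $(e, f)$ is an order unit, given $(v, w)$ choose $\ld_1, \ld_2 \in (0, \infty)$ with $-\ld_1 e \leq v \leq \ld_1 e$ and $-\ld_2 f \leq w \leq \ld_2 f$, and take $\ld = \max(\ld_1, \ld_2)$.

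For the required norm formula, one observes that since the order is componentwise,
\[
-\ld (e, f) \leq (v, w) \leq \ld (e, f)
\quad \Longleftrightarrow \quad
-\ld e \leq v \leq \ld e \ \text{and}\ -\ld f \leq w \leq \ld f ,
\]
so the infimum of such $\ld$ is the maximum of the two coordinate infima, which by the assumption that $V$ and $W$ are order unit spaces equals $\max(\|v\|, \|w\|) = \| (v, w) \|$. The argument is essentially a coordinatewise verification, and I do not anticipate any genuine obstacle; the only point to be careful about is to argue directly from Definition~\ref{D_6919_OrUnitSp}, which bundles the partial order, the order unit, the Archimedean property, and the norm formula together, rather than invoking alternative characterizations of order unit spaces.
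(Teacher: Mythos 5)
Your coordinatewise verification is correct and is exactly the ``straightforward'' argument the paper has in mind (the paper omits the proof entirely). The only implicit point is that $\max(\ld_1,\ld_2)$ works in the order unit and norm steps because $e \geq 0$ and $f \geq 0$ in any order unit space, which is standard and immediate from the order unit property applied to $e$ and $f$ themselves.
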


\begin{proof}
The proof is straightforward.
\end{proof}

\begin{Lemma}\label{L_6921_DSumTraces}
Let $A$ and $B$ be unital \ca{s}.
Then,
taking the direct sum on the right to be
as in Definition~\ref{D_6921_DSumOrdUn},
there is an isomorphism
\[
\Aff (\T (A \oplus B )) \cong \Aff (\T (A)) \oplus \Aff (\T (B )) \, ,
\]
given as follows.
Identify $\T (A)$ with a subset of $\T (A \oplus B )$
by, for $\ta \in \T (A)$,
defining $i (\ta) (a, b) = \ta (a)$
for all $a \in A$ and $b \in B$,
and similarly identify $\T (B)$ with a subset of $\T (A \oplus B )$.
Then the map
$\Aff (\T (A \oplus B )) \to \Aff (\T (A)) \oplus \Aff (\T (B ))$
is $f \mapsto (f |_{\T (A)}, \, f |_{\T (B)})$.
\end{Lemma}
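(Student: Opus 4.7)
The plan is to exploit the fact that $\T(A \oplus B)$ is the convex hull of the two embedded copies $i(\T(A))$ and $j(\T(B))$, and then construct the inverse of $\Phi \colon f \mapsto (f|_{\T(A)}, f|_{\T(B)})$ explicitly by positively homogeneous extension.

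First I would describe the structure of $\T(A \oplus B)$. For each $\sigma \in \T(A \oplus B)$, set $\lambda = \sigma(1_A, 0) \in [0, 1]$; then $\sigma(a, b) = \sigma(a, 0) + \sigma(0, b)$, and when $\lambda \in (0, 1)$ the formulas $\tau_A(a) = \lambda^{-1} \sigma(a, 0)$ and $\tau_B(b) = (1 - \lambda)^{-1} \sigma(0, b)$ produce tracial states on $A$ and $B$ with $\sigma = \lambda i(\tau_A) + (1 - \lambda) j(\tau_B)$. The boundary cases $\lambda \in \{0, 1\}$ correspond to $\sigma \in j(\T(B))$ and $\sigma \in i(\T(A))$, respectively. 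Granting this, well-definedness of $\Phi$ is immediate because $i$ and $j$ are continuous affine embeddings, and $\Phi$ is clearly linear, positive, and preserves the order unit (the constant function $1$). Injectivity follows at once from the decomposition above: if $f$ vanishes on $i(\T(A)) \cup j(\T(B))$, then by affineness $f(\sigma) = \lambda f(i(\tau_A)) + (1 - \lambda) f(j(\tau_B)) = 0$ for every $\sigma$.

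For surjectivity, given $(g, h) \in \Aff(\T(A)) \oplus \Aff(\T(B))$ I would define $\widetilde{g}(\mu) = \mu(1) \cdot g(\mu/\mu(1))$ for a nonzero positive tracial functional $\mu$ on $A$ and $\widetilde{g}(0) = 0$, similarly $\widetilde{h}$, and set
\[
f(\sigma) = \widetilde{g}(\sigma|_A) + \widetilde{h}(\sigma|_B),
\]
where $\sigma|_A(a) = \sigma(a, 0)$ and $\sigma|_B(b) = \sigma(0, b)$. A short calculation reducing two positive functionals to a common denominator shows that $\widetilde{g}$ is additive and positively homogeneous on the positive tracial cone, so $f$ is affine; the identities $f \circ i = g$ and $f \circ j = h$ are then immediate from the definitions, establishing $\Phi(f) = (g, h)$. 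Positivity of $\Phi^{-1}$ is automatic, since $\widetilde{g}, \widetilde{h} \geq 0$ on their cones whenever $g, h \geq 0$.

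The one step requiring care, and the main (modest) obstacle, is continuity of $f$ at states $\sigma$ with $\sigma(1_A, 0) \in \{0, 1\}$, where the defining formula for $\widetilde{g}$ is formally of type $0/0$. The estimate $|\widetilde{g}(\mu)| \leq \mu(1) \cdot \|g\|$, valid because $g$ is bounded on the compact set $\T(A)$, together with the weak-$*$ continuity of $\sigma \mapsto \sigma|_A$, yields continuity of $\widetilde{g} \circ (\sigma \mapsto \sigma|_A)$ on all of $\T(A \oplus B)$ (and symmetrically for the $B$-term), completing the argument.
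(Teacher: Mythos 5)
Your proof is correct and follows essentially the same route as the paper: the same convex decomposition of $\T(A\oplus B)$ for injectivity, and for surjectivity exactly the paper's formula $f(\sigma)=\sigma(1,0)\,g\bigl(\sigma(1,0)^{-1}\sigma|_A\bigr)+\sigma(0,1)\,h\bigl(\sigma(0,1)^{-1}\sigma|_B\bigr)$, written via the homogeneous extensions $\widetilde{g},\widetilde{h}$. The only difference is that you spell out the affineness (common-denominator) computation and the continuity estimate $|\widetilde{g}(\mu)|\leq\mu(1)\|g\|$ at the boundary cases, which the paper dismisses as ``straightforward but somewhat tedious calculations.''
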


\begin{proof}
It is clear that if $f \in \Aff (\T (A \oplus B))$,
then $f |_{\T (A)} \in \Aff (\T (A))$
and $f |_{\T (B)} \in \Aff (\T (B))$,
and moreover that the map
of the lemma is linear, positive,
and preserves the order units.
One easily checks that every \tst{} on $A \oplus B$
is a convex combination of tracial states on $A$ and~$B$,
from which it follows that if
$f |_{\T (A)} = 0$ and $f |_{\T (B)} = 0$ then $f = 0$.

It remains to prove that the map of the lemma is surjective.
Let $g \in \Aff (\T (A))$ and $h \in \Aff (\T (B))$.
Define
$f \colon \T (A \oplus B ) \to \R$
by, for $\ta \in \T (A \oplus B )$,
\[
f (\ta)
= \ta (1, 0) g \big( \ta (1, 0)^{-1} \ta |_A \big)
+ \ta (0, 1) g \big( \ta (0, 1)^{-1} \ta |_B \big)
\]
(taking the first summand to be zero if $\ta (1, 0) = 0$
and the second summand to be zero if $\ta (0, 1) = 0$).
Straightforward but somewhat tedious calculations
show that $f$ is weak* \ct{} and affine,
and clearly $f |_{\T (A)} = g$ and $f |_{\T (B)} = h$.
\end{proof}

The following result generalizes Lemma~3.4 of~\cite{thomsen}.
It still isn't the most general Elliott approximate intertwining
result for order unit spaces,
because we assume that the underlying order unit spaces
of the two direct systems are the same.
The main effect of this assumption is to simplify the notation.

\begin{Prop}\label{P_6917_OrdULim}
Let $(V_m)_{m = 0, 1, 2, \ldots}$ be a sequence of
separable complete order unit spaces,
and let
\[
\big( (V_m)_{m = 0, 1, 2, \ldots},
\, ( \ph_{n, m} )_{0 \leq m \leq n} \big)
\andeqn
\big( (V_m)_{m = 0, 1, 2, \ldots},
\, ( \ph_{n, m}' )_{0 \leq m \leq n} \big)
\]
be two direct systems of order unit spaces,
using the same spaces,
and with maps $\ph_{n, m}, \ph_{n, m}' \colon V_m \to V_n$
which are linear, positive,
and preserve the order units.
Let $V$ and $V'$ be the direct limits
\[
V = \dirlim_n
\big( (V_m)_{m = 0, 1, 2, \ldots},
\, ( \ph_{n, m} )_{0 \leq m \leq n} \big)
\]
and
\[
V' = \dirlim_n
\big( (V_m)_{m = 0, 1, 2, \ldots},
\, ( \ph_{n, m}' )_{0 \leq m \leq n} \big) \, ,
\]
with corresponding maps
\[
\ph_{\infty, n} \colon V_n \to V
\andeqn
\ph_{\infty, n}' \colon V_n \to V'
\]
for $n \in \Nz$.
For $n \in \Nz$ further let
\[
v_0^{(n)}, \, v_1^{(n)}, \, \ldots
\in V_n
\]
be a dense sequence in the closed unit ball of~$V_n$,
and define $F_n \S V_n$
to be the finite set
\[
F_n = \bigcup_{m = 0}^n
   \left[ \big\{ \ph_{n, m} \big( v_k^{(m)} \big) \colon
    0 \leq k \leq n \big\}
\cup
\big\{ \ph_{n, m}' \big( v_k^{(m)} \big) \colon
    0 \leq k \leq n \big\} \right] \, .
\]
Suppose that there are $\dt_0, \dt_1, \ldots \in (0, \infty)$
such that
\begin{equation}\label{Eq_6917_SumDt}
\sum_{n = 0}^{\infty} \dt_n < \infty
\end{equation}
and for all $n \in \Nz$ and all $v \in F_n$ we have
\[
\| \ph_{n + 1, \, n} (v) - \ph_{n + 1, \, n}' (v) \| < \dt_n \, .
\]
Then there is a unique isomorphism
$\rh \colon V \to V'$
such that for all $m \in \Nz$ and all $v \in V_m$
we have
\[
\rh ( \ph_{\infty, m} (v))
= \lim_{n \to \infty} ( \ph_{\infty, n}' \circ \ph_{n, m} ) (v) \, .
\]
Its inverse is determined by
\[
\rh^{-1} ( \ph_{\infty, m}' (v))
= \lim_{n \to \infty} ( \ph_{\infty, n} \circ \ph_{n, m}' ) (v)
\]
for $m \in \Nz$ and $v \in V_m$.
\end{Prop}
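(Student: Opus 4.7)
The plan is a standard Elliott approximate intertwining argument, adapted to order unit spaces, using two facts repeatedly: positive order-unit-preserving maps between order unit spaces are norm-contractive (Definition~\ref{D_6919_OrUnitSp}), and the direct limit structure makes $\ph_{\infty, n} \circ \ph_{n, m} = \ph_{\infty, m}$ and similarly for the primed system. The first step is to build, for each fixed $m \in \Nz$, a map $\rh_m \colon V_m \to V'$ by $\rh_m(v) = \lim_{n \to \infty} (\ph_{\infty, n}' \circ \ph_{n, m})(v)$. The identity $\ph_{n, m+1} \circ \ph_{m+1, m} = \ph_{n, m}$ forces $\rh_{m+1} \circ \ph_{m+1, m} = \rh_m$, so the $\rh_m$ assemble into a linear, positive, order-unit-preserving map $\rh \colon V \to V'$ uniquely determined by the stated formula.

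To prove the limit defining $\rh_m$ exists, note that for $n \geq m$,
\[
\ph_{\infty, n + 1}' \circ \ph_{n + 1, m} - \ph_{\infty, n}' \circ \ph_{n, m}
 = \ph_{\infty, n + 1}' \circ \bigl( \ph_{n + 1, n} - \ph_{n + 1, n}' \bigr) \circ \ph_{n, m}
\]
since $\ph_{\infty, n}' = \ph_{\infty, n + 1}' \circ \ph_{n + 1, n}'$ and $\ph_{n + 1, m} = \ph_{n + 1, n} \circ \ph_{n, m}$. For any $v = v_k^{(m)}$ from the distinguished dense sequence, whenever $n \geq \max(m, k)$ we have $\ph_{n, m}(v_k^{(m)}) \in F_n$ by construction, so the hypothesis gives that the right side has norm less than $\dt_n$ (using that $\ph_{\infty, n + 1}'$ is contractive). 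Summability~(\ref{Eq_6917_SumDt}) therefore yields a Cauchy sequence, and since all the maps $\ph_{\infty, n}' \circ \ph_{n, m}$ have norm $\leq 1$, a standard $3\eps$ argument extends the existence of the limit from the dense sequence in the unit ball to all of $V_m$. Positivity, linearity, and preservation of the order unit for $\rh_m$ (and hence $\rh$) pass to the limit from the corresponding properties of $\ph_{\infty, n}' \circ \ph_{n, m}$.

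By symmetry, swapping the two systems produces a positive, order-unit-preserving linear map $\sm \colon V' \to V$ satisfying the formula claimed for $\rh^{-1}$. The main obstacle, and the only nontrivial remaining task, is verifying $\sm \circ \rh = \id_V$ and $\rh \circ \sm = \id_{V'}$. For $v = v_k^{(m)}$, continuity of $\sm$ gives
\[
(\sm \circ \rh)(\ph_{\infty, m}(v))
 = \lim_{n} \, \lim_{n'} \, \bigl( \ph_{\infty, n'} \circ \ph_{n', n}' \circ \ph_{n, m} \bigr)(v) \, ,
\]
whereas $\ph_{\infty, m}(v) = \lim_{n'} (\ph_{\infty, n'} \circ \ph_{n', n} \circ \ph_{n, m})(v)$ for any $n \geq m$. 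The telescoping identity
\[
\ph_{n', n}' - \ph_{n', n}
 = \sum_{j = n}^{n' - 1} \ph_{n', j + 1}' \circ \bigl( \ph_{j + 1, j}' - \ph_{j + 1, j} \bigr) \circ \ph_{j, n} \, ,
\]
evaluated at $w = \ph_{n, m}(v_k^{(m)})$, together with the observation that $\ph_{j, n}(w) = \ph_{j, m}(v_k^{(m)}) \in F_j$ for $j \geq n \geq \max(m, k)$, bounds the inner difference by $\sum_{j \geq n} \dt_j$. This tail vanishes as $n \to \infty$ by~(\ref{Eq_6917_SumDt}), so $(\sm \circ \rh)(\ph_{\infty, m}(v_k^{(m)})) = \ph_{\infty, m}(v_k^{(m)})$. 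Density of the $v_k^{(m)}$ in the unit ball of $V_m$, density of $\bigcup_m \ph_{\infty, m}(V_m)$ in $V$, and contractivity of $\sm \circ \rh$ extend this to $\sm \circ \rh = \id_V$. The identity $\rh \circ \sm = \id_{V'}$ follows by the symmetric argument, and uniqueness of $\rh$ is immediate from the defining formula and density.
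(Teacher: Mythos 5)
Your proposal is correct and follows essentially the same route as the paper: construct $\rh_m (v) = \lim_{n} ( \ph_{\infty, n}' \circ \ph_{n, m} ) (v)$ via Cauchy estimates on the distinguished dense sequences (using $\ph_{n, m} \big( v_k^{(m)} \big) \in F_n$ for $n \geq \max (m, k)$ together with summability of the $\dt_n$), extend by a $3 \eps$ argument and the universal property of the direct limit, and build the inverse symmetrically. Your explicit telescoping double-limit verification that $\sm \circ \rh = \id_V$ is in fact somewhat more detailed than the paper's terse final step, but it is the same underlying argument.
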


\begin{proof}
We first claim that for $m \in \Nz$ and $v \in F_m$,
the sequence
$\big( ( \ph_{\infty, n}' \circ \ph_{n, m} ) (v) \big)_{n \geq m}$
is a Cauchy sequence in~$V'$.
For $n \geq m$,
we estimate,
using $\| \ph_{\infty, n + 1}' \| \leq 1$, $\| v \| \leq 1$,
and $\ph_{n, m} (v) \in F_n$ at the last step:
\begin{align*}
& \big\| ( \ph_{\infty, n + 1}' \circ \ph_{n + 1, \, m} ) (v)
- ( \ph_{\infty, n}' \circ \ph_{n, m} ) (v) \big\|
\\
& \hspace*{3em} {\mbox{}}
= \big\| ( \ph_{\infty, n + 1}'
\circ \ph_{n + 1, \, n} \circ \ph_{n, m} )
(v)
- ( \ph_{\infty, n + 1}' \circ \ph_{n + 1, \, n}' \circ \ph_{n, m} )
(v) \big\|
\\
& \hspace*{3em} {\mbox{}}
\leq \| \ph_{\infty, n + 1}' \|
\big\| \ph_{n + 1, \, n} ( \ph_{n, m} (v) )
- \ph_{n + 1, \, n}' ( \ph_{n, m} (v) ) \big\|
\leq \dt_n \, .
\end{align*}
The claim now follows from~(\ref{Eq_6917_SumDt}).

Next,
we claim that for $m \in \Nz$ and $k \in \N$,
the sequence
$\big( ( \ph_{\infty, n}' \circ \ph_{n, m} ) \big( v_k^{(m)} \big)
   \big)_{n \geq m}$
is a Cauchy sequence in~$V'$.
Indeed,
taking $m_0 = \max (m, k)$,
this follows from the previous claim and the fact that
$\ph_{m_0, m} \big( v_k^{(m)} \big) \in F_{m_0}$.

Now we claim that for $m \in \Nz$ and $v \in V_m$,
the sequence
$( ( \ph_{\infty, n}' \circ \ph_{n, m} ) (v) )_{n \geq m}$
is a Cauchy sequence in~$V'$.
Without loss of generality
$\| v \| \leq 1$.
This claim follows from a standard $\frac{\ep}{3}$~argument:
to show that
\[
\big\| ( \ph_{\infty, n_1}' \circ \ph_{n_1, m} ) (v)
  - ( \ph_{\infty, n_2}' \circ \ph_{n_2, m} ) (v) \big\| < \ep
\]
for all sufficiently large~$n_1$ and~$n_2$,
choose $k \in \N$ such that
$\big\| v - v_k^{(m)} \big\| < \frac{\ep}{3}$,
and use the previous claim.

Since $V'$ is complete,
it follows that
$\lim_{n \to \infty}
( \ph_{\infty, n}' \circ \ph_{n, m} ) (v)$
exists for all $m \in \Nz$ and $k \in \N$.
Since $\| \ph_{\infty, n}' \circ \ph_{n, m} \| \leq 1$
whenever $m, n \in \Nz$ satisfy $m \leq n$,
it follows that for $m \in \N$ there is a unique
bounded linear map
$\rh_m \colon V_m \to V'$
such that $\| \rh_m \| \leq 1$ and
$\rh_m (v)
= \lim_{n \to \infty} ( \ph_{\infty, n}' \circ \ph_{n, m} ) (v)$
for all $k \in \N$.

It is clear from the construction that
$\rh_n \circ \ph_{n, m} = \rh_m$
whenever $m, n \in \Nz$ satisfy $m \leq n$.
By the universal property of the direct limit,
there is a unique bounded linear map $\rh \colon V \to V'$
such that $\rh \circ \ph_{\infty, m} = \rh_m$ for all $m \in \Nz$.
It is clearly contractive, order preserving,
and preserves the order units,
and is uniquely determined as in the statement of the proposition.

The same argument shows that there is a unique contractive linear
map $\ld \colon V' \to V$
determined in the analogous way.
For all $m \in \Nz$,
we have
\[
\ld \circ \rh \circ \ph_{\infty, m}
= \ld \circ \ph_{\infty, m}'
= \ph_{\infty, m} \, ,
\]
so the universal property of the direct limit
implies $\ld \circ \rh = \id_V$.
Similarly $\rh \circ \ld = \id_{V'}$.
\end{proof}

\begin{Prop}\label{P_6918_Nat}
The isomorphism of Proposition~\ref{P_6917_OrdULim}
has the following naturality property.
Let the notation be as there,
and suppose that,
in addition,
we are given
separable complete order unit spaces $W_n$ for $n \in \Nz$,
direct systems
\[
\big( (W_m)_{m = 0, 1, 2, \ldots},
\, ( \ps_{n, m} )_{0 \leq m \leq n} \big)
\andeqn
\big( (W_m)_{m = 0, 1, 2, \ldots},
\, ( \ps_{n, m}' )_{0 \leq m \leq n} \big)
\]
using the same spaces,
with positive linear order unit preserving maps,
with direct limits $W$ and~$W'$,
and with corresponding maps
\[
\ps_{\infty, n} \colon W_n \to W
\andeqn
\ps_{\infty, n}' \colon W_n \to W'
\]
for $n \in \Nz$.
Also suppose that for $n \in \N$ there is a
sequence
\[
w_0^{(n)}, \, w_1^{(n)}, \, \ldots \in W_n
\]
which is dense in the closed unit ball of~$W_n$,
and that there is
a sequence $(\ep_n)_{n = 0, 1, 2, \ldots}$ in $(0, \infty)$
such that $\sum_{n = 0}^{\infty} \ep_n < \infty$
and,
with
\[
G_n = \bigcup_{m = 0}^n
\left[ \big\{ \ps_{n, m} \big( w_k^{(m)} \big) \mid
0 \leq k \leq n \big\}
\cup
\big\{ \ps_{n, m}' \big( w_k^{(m)} \big) \mid
0 \leq k \leq n \big\} \right] \, ,
\]
for all $n \in \Nz$ and all $w \in G_n$ we have
\[
\| \ps_{n + 1, \, n} (w) - \ps_{n + 1, \, n}' (w) \| < \ep_n \, .
\]
Let $\sm \colon W \to W'$ be the isomorphism of
Proposition~\ref{P_6917_OrdULim}.
Suppose further that we have positive linear order unit preserving maps
$\mu_n, \mu_n' \colon V_n \to W_n$
for $n \in \Nz$
such that
\[
\mu_n \circ \ph_{n, m} = \ps_{n, m} \circ \mu_m
\andeqn
\mu_n' \circ \ph_{n, m}' = \ps_{n, m}' \circ \mu_m'
\]
for all $m, n \in \Nz$ with $m \leq n$.
Let $\mu \colon V \to W$ and $\mu' \colon V' \to W'$
be the induced maps of the direct limits.
Then $\mu' \circ \rh = \sm \circ \mu$.
\end{Prop}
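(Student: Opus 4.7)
The plan is to reduce the equality $\mu' \circ \rh = \sm \circ \mu$ to an equality on each image $\ph_{\infty, m}(V_m)$, and then to verify it by unfolding all four maps in terms of their defining limit formulas and comparing. The reduction is justified by the universal property of the direct limit $V$: the union $\bigcup_{m \in \Nz} \ph_{\infty, m}(V_m)$ is dense in $V$, and both $\mu' \circ \rh$ and $\sm \circ \mu$ are contractive (since each of the four factors has norm at most~$1$ as a positive linear order unit preserving map). So it is enough to fix $m \in \Nz$ and $v \in V_m$ and check $(\mu' \circ \rh)(\ph_{\infty, m}(v)) = (\sm \circ \mu)(\ph_{\infty, m}(v))$.

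For the left side, I would use the description of $\rh$ from Proposition~\ref{P_6917_OrdULim}, then pull $\mu'$ through the limit using its contractivity, and then apply the intertwining property of the $\mu_n'$:
\[
(\mu' \circ \rh)(\ph_{\infty, m}(v))
 = \mu' \Bigl( \lim_{n \to \I} \ph_{\infty, n}'(\ph_{n, m}(v)) \Bigr)
 = \lim_{n \to \I} (\mu' \circ \ph_{\infty, n}')(\ph_{n, m}(v))
 = \lim_{n \to \I} \ps_{\infty, n}'(\mu_n'(\ph_{n, m}(v))) \, .
\]
For the right side, I would first use functoriality of the induced map $\mu$ to write $\mu(\ph_{\infty, m}(v)) = \ps_{\infty, m}(\mu_m(v))$, then apply the formula for $\sm$ analogous to that for $\rh$, and finally use the intertwining $\ps_{n, m} \circ \mu_m = \mu_n \circ \ph_{n, m}$:
\[
(\sm \circ \mu)(\ph_{\infty, m}(v))
 = \sm(\ps_{\infty, m}(\mu_m(v)))
 = \lim_{n \to \I} \ps_{\infty, n}'(\ps_{n, m}(\mu_m(v)))
 = \lim_{n \to \I} \ps_{\infty, n}'(\mu_n(\ph_{n, m}(v))) \, .
\]

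Comparing the two computations, both sides reduce to a limit of the form $\lim_{n \to \I} \ps_{\infty, n}'(\mu_n^{\ast}(\ph_{n, m}(v)))$, with $\mu_n^{\ast}$ being $\mu_n'$ on one side and $\mu_n$ on the other. The main obstacle — and the only substantive point beyond bookkeeping — is to show these two limits coincide. The key is that both $\ps_{\infty, n + 1}' \circ \ps_{n + 1, n}$ and $\ps_{\infty, n + 1}' \circ \ps_{n + 1, n}'$ equal $\ps_{\infty, n}'$ up to an error bounded by $\ep_n$, and $\sum_n \ep_n < \I$; combined with the intertwining relations this lets one telescope the difference $\ps_{\infty, n}'(\mu_n - \mu_n')(\ph_{n, m}(v))$ into an absolutely summable tail, which forces the two limits to agree. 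The uniqueness part of the universal property of $V$ then yields $\mu' \circ \rh = \sm \circ \mu$, as desired.
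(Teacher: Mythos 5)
Your unfolding of the two sides is exactly the paper's computation: reduce to the dense set $\bigcup_{m} \ph_{\infty, m} (V_m)$, and write
$(\mu' \circ \rh) (\ph_{\infty, m} (v)) = \lim_{n \to \I} \ps_{\infty, n}' \bigl( \mu_n' (\ph_{n, m} (v)) \bigr)$
and
$(\sm \circ \mu) (\ph_{\infty, m} (v)) = \lim_{n \to \I} \ps_{\infty, n}' \bigl( \mu_n (\ph_{n, m} (v)) \bigr)$.
The gap is your last step. You assert that the $\ep_n$-closeness of $\ps_{n + 1, n}$ and $\ps_{n + 1, n}'$, ``combined with the intertwining relations,'' lets one telescope $\ps_{\infty, n}' \bigl( (\mu_n - \mu_n') (\ph_{n, m} (v)) \bigr)$ into a summable tail. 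No such estimate is available: the hypotheses bound $\ps_{n + 1, n} - \ps_{n + 1, n}'$ (and $\ph_{n + 1, n} - \ph_{n + 1, n}'$) on the sets $G_n$ (resp.\ $F_n$), but they impose no relation whatsoever between $\mu_n$ and $\mu_n'$, and the intertwining identities tie $\mu_n$ only to the unprimed systems and $\mu_n'$ only to the primed ones. Indeed, read literally, the two limits can differ: take $V_n = W_n = \R^2$ with coordinatewise order and order unit $(1, 1)$, all connecting maps $\ph_{n, m} = \ph_{n, m}' = \ps_{n, m} = \ps_{n, m}' = \id$, $\mu_n = \id$, and $\mu_n'$ the coordinate flip; every hypothesis holds (the relevant differences are zero), $\rh$ and $\sm$ are identities, $\mu = \id$, $\mu'$ is the flip, and $\mu' \circ \rh \neq \sm \circ \mu$. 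So the bridging argument you sketch cannot be carried out from the stated hypotheses.

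What the paper's proof does at this point is apply the intertwining relation with the same family of maps in both roles: it passes from $\lim_{n \to \I} (\mu' \circ \ph_{\infty, n}' \circ \ph_{n, m}) (v)$ directly to $\lim_{n \to \I} (\ps_{\infty, n}' \circ \ps_{n, m} \circ \mu_m) (v)$, which amounts to using $\mu_n' \circ \ph_{n, m} = \ps_{n, m} \circ \mu_m$, that is, to taking $\mu_n' = \mu_n$. That is how the proposition is meant to be read, and it is what happens in every application in the paper (in Proposition~\ref{P_6917_CStLim} and in the proof of Theorem~\ref{T_6922_FlipOfEll}, the same maps $\mu_n$, resp.\ $\te_n$, intertwine both pairs of systems). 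Under that reading your two limits are equal term by term and nothing remains to be proved; alternatively, an added hypothesis such as $\sum_{n} \| \mu_n - \mu_n' \| < \I$ (even just $\| \mu_n - \mu_n' \| \to 0$) would close your argument immediately, since $\bigl\| \ps_{\infty, n}' \bigl( (\mu_n - \mu_n') (\ph_{n, m} (v)) \bigr) \bigr\| \leq \| \mu_n - \mu_n' \| \, \| v \|$. As written, however, your final step is a genuine gap.
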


\begin{proof}
By construction,
$\rh \colon V \to V'$
and $\sm \colon W \to W'$
are determined by
\begin{equation}\label{Eq_6919_Star}
\rh ( \ph_{\infty, m} (v))
= \lim_{n \to \infty} ( \ph_{\infty, n}' \circ \ph_{n, m} ) (v)
\end{equation}
for $m \in \Nz$ and $v \in V_m$,
and
\begin{equation}\label{Eq_6919_Star_2}
\sm ( \ps_{\infty, m} (w))
= \lim_{n \to \infty} ( \ps_{\infty, n}' \circ \ps_{n, m} ) (w)
\end{equation}
for $m \in \Nz$ and $w \in W_m$.
Using~(\ref{Eq_6919_Star})
at the first step
and~(\ref{Eq_6919_Star_2}) at the last step,
for $m \in \Nz$ and $v \in V_m$ we therefore have
\begin{align*}
(\mu' \circ \rh) ( \ph_{\infty, m} (v) )
& = \mu' \left(
\lim_{n \to \infty} ( \ph_{\infty, n}' \circ \ph_{n, m} ) (v) \right)
= \lim_{n \to \infty}
(\mu' \circ \ph_{\infty, n}' \circ \ph_{n, m} ) (v)
\\
& = \lim_{n \to \infty}
( \ps_{\infty, n}' \circ \ps_{n, m} \circ \mu_m ) (v)
= (\sm \circ \mu) ( \ph_{\infty, m} (v) ) \, .
\end{align*}
Since
$\bigcup_{m = 0}^{\infty} \ph_{\infty, m} (V_m)$ is dense in~$V$,
the result follows.
\end{proof}

Proposition~\ref{P_arbitrary_trace_space} below
can essentially be extracted
from the proof of Lemma~3.7 of~\cite{thomsen}.
We give here a precise formulation which is needed for our purposes.
The difference between our formulation
and that of~\cite{thomsen}
is that we need more control over the matrix sizes
in the construction.
In the argument,
the following result substitutes for Lemma~3.6 there.

\begin{Lemma}[Based on~\cite{thomsen}]\label{L_7630_ConvComb}
Let $X$ and $Y$ be compact Hausdorff spaces,
with $X$ path connected.
Let $\ld \colon \T (C (Y)) \to \T (C (X))$
be affine and \ct.
Let $E (\ld) \colon C (X, \R) \to C (Y, \R)$
be as in Lemma~\ref{L_7701_XToTCX}.
Then for every $\ep > 0$ and every finite set $F \S C (X, \R)$
there exists $N_0 \in \N$
such that for every $N \in \N$ with $N \geq N_0$
there are \cfn{s} $g_1, g_2, \ldots, g_N \colon Y \to X$
such that for every $f \in F$ we have
\[
\Bigg\| E (\ld) (f)
    - \frac{1}{N} \sum_{j = 1}^N f \circ g_j \Bigg\|_{\I}
 < \ep \, .
\]
\end{Lemma}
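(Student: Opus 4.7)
The plan is to first invoke Thomsen's construction to obtain a single uniform approximation with some specific number $N_1$ of continuous functions, then use an easy padding argument, based on Euclidean division combined with duplication of some of the functions, to handle all sufficiently large~$N$.

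First, I would extract from the proof of Lemma~3.6 of~\cite{thomsen} the existence of some $N_1 \in \N$ and continuous functions $h_1, h_2, \ldots, h_{N_1} \colon Y \to X$ such that
\[
\left\| E (\ld) (f) - \frac{1}{N_1}\sum_{j = 1}^{N_1} f \circ h_j \right\|_{\I} < \frac{\ep}{2}
\]
for all $f \in F$. The usual construction partitions $X$ into finitely many small pieces on which each $f \in F$ oscillates by less than $\ep/4$, and uses the weak-$*$ continuous family of probability measures $\nu_y$ on $X$ corresponding to $\ld (\ev_y)$ (through Lemma~\ref{L_7701_XToTCX}), together with a partition of unity on~$Y$, to approximate $E (\ld) (f) (y) = \int_X f \, d \nu_y$ by a uniform average of point evaluations. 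Path connectedness of~$X$ enters in the construction of the continuous selections $h_j$, by interpolating along paths between representative points of the pieces as $y$ varies.

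Next, I would set $M = 1 + \max_{f \in F} \| f \|_{\I}$ and choose $N_0$ so large that $2 M N_1 / N < \ep/2$ for all $N \geq N_0$. For any such $N$, write $N = q N_1 + r$ with $q \geq 1$ and $0 \leq r < N_1$, and define $g_1, g_2, \ldots, g_N$ by repeating $h_1, \ldots, h_r$ each $q + 1$ times and $h_{r + 1}, \ldots, h_{N_1}$ each $q$ times. This gives $r (q + 1) + (N_1 - r) q = N$ functions, and
\[
\frac{1}{N} \sum_{j = 1}^N f \circ g_j = \frac{q N_1}{N} \cdot \frac{1}{N_1} \sum_{j = 1}^{N_1} f \circ h_j + \frac{1}{N} \sum_{j = 1}^r f \circ h_j .
\]
A short calculation, using $\| E (\ld) (f) \|_{\I} \leq \| f \|_{\I}$ (since $E (\ld)$ is a Markov operator) together with $r/N < N_1 / N$, shows that the uniform distance from $E (\ld) (f)$ is at most $\ep/2 + 2 M N_1 / N < \ep$, as required.

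The main obstacle is the first step: producing the uniform approximation from Thomsen's techniques requires some care, especially because the estimate must be uniform over the entire finite set~$F$, and the path connectedness of~$X$ is genuinely used to glue together the local choices of representatives into continuous functions on~$Y$. The padding trick in the second step is entirely routine, but it is precisely what gives us the freedom to take any $N \geq N_0$ rather than being stuck with Thomsen's single value $N_1$; this flexibility is what lets us later match the matrix sizes in the construction.
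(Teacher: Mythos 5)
Your proposal is correct, but it takes a somewhat different route from the paper. The paper never produces an equal-weight approximation at a single $N_1$: it quotes Theorem~2.1 of~\cite{thomsen}, which approximates $E (\ld)$ on $F$ by a convex combination $\sum_{l = 1}^n \af_l \ps_l$ of unital homomorphisms with \emph{arbitrary} real weights, and then, for each $N \geq N_0$ separately (with $N_0 > 4 n / \ep$), perturbs the weights to nonnegative integer multiples of $\frac{1}{N}$ summing to~$1$ and repeats each corresponding map $h_l$ the appropriate number of times. You instead take as input the equal-weight statement for one $N_1$ (attributed to Lemma~3.6 of~\cite{thomsen} and its proof) and then duplicate functions after writing $N = q N_1 + r$; your error estimate $2 \| f \| r / N \leq 2 M N_1 / N$ is correct (just also require $N_0 \geq N_1$ so that $q \geq 1$). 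The trade-off: the paper's route is more self-contained and matches the stated generality, since Theorem~2.1 of~\cite{thomsen} is a cleanly quotable statement for path-connected compact Hausdorff~$X$, whereas Lemma~3.6 of~\cite{thomsen} is formulated in the setting of that paper, so leaning on ``its proof'' (your sketch with partitions of unity and interpolation along paths is exactly the hard analytic content) is less precise; and if you derive your single-$N_1$ equal-weight approximation honestly from Theorem~2.1, that derivation is precisely the paper's rounding argument, which already works for every large $N$ and renders the padding redundant. What your route buys is conceptual clarity: it isolates the passage from ``some $N$'' to ``all $N \geq N_0$'' --- the extra control over matrix sizes that the paper needs beyond~\cite{thomsen} --- as a purely combinatorial padding step.
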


\begin{proof}
It suffices to prove the result under the additional assumption
that $\| f \| \leq 1$ for all $f \in F$.

Let $\ep > 0$.
Since $E (\ld)$ is a Markov operator,
Theorem~2.1 of~\cite{thomsen}
provides $n \in \N$,
unital \hm{s} $\ps_1, \ps_2, \ldots, \ps_n \colon C (X) \to C (Y)$,
and $\af_1, \af_2, \ldots, \af_n \in [0, 1]$
with $\sum_{l = 1}^n \af_l = 1$
such that
\[
\left\| E (\ld) (f) - \sum_{l = 1}^n \af_l \ps_l (f) \right\|_{\I}
 < \frac{\ep}{2}
\]
for all $f \in F$.
Note that if $\beta_1, \beta_2, \ldots, \beta_n \in [0, 1]$ satisfy
$\sum_{l = 1}^n | \alpha_l - \beta_l | < \frac{\ep}{2}$ then
\[
\left\| E (\ld) (f) - \sum_{l = 1}^n \beta_l \ps_l (f) \right\|_{\I}
< \ep
\]
for all $f \in F$.
Choose $N_0 \in \N$ such that $N_0 > 4 n / \ep$.
Let $N \in \N$ satisfy $N \geq N_0$.
For $l = 1, 2, \ldots, n - 1$
choose
$\bt_l \in \left( \af_l - \frac{1}{N}, \, \af_l \right]
     \cap \frac{1}{N} \Z$,
and set $\bt_n = 1 - \sum_{l = 1}^{n - 1} \bt_l$.
Then
\[
\beta_1, \beta_2, \ldots, \beta_n \in \frac{1}{N} \Nz,
\qquad
\sum_{l = 1}^n \bt_l = 1,
\andeqn
\sum_{l = 1}^n | \alpha_l - \beta_l | < \frac{\ep}{2} \, .
\]

Set $m_l = N \bt_l$ for $l = 1, 2, \ldots, n$.
Then for all $f \in F$
we have
\[
\left\| E (\ld) (f)
    - \frac{1}{N} \sum_{l = 1}^n m_l \ps_l (f) \right\|_{\I}
 < \ep \, .
\]

Now for $l = 1, 2, \ldots, n$
let $h_l \colon Y \to X$ be the \cfn{}
such that $\ps_l (f) = f \circ h_l$
for all $f \in C (X)$,
and for $j = 1, 2, \ldots, N$
define $g_j = h_l$ when
\[
\sum_{k = 1}^{l - 1} m_k < j \leq \sum_{k = 1}^{l} m_k \, .
\]
Then
\[
\frac{1}{N} \sum_{l = 1}^n m_l \ps_l (f)
 = \frac{1}{N} \sum_{j = 1}^N f \circ g_j
\]
for all $f \in C (X)$.
\end{proof}

\begin{Prop}\label{P_arbitrary_trace_space}
Let $K$ be a metrizable Choquet simplex,
and let $(l (n))_{n = 0, 1, 2, \ldots}$ be a sequence of
integers such that $l (n) \geq 2$ for all $n > 0$.
For $n \in \Nz$ set $r (n) = \prod_{j = 1}^n l (j)$.
Then there exist $n_0 < n_1 < n_2 < \cdots \in \N$,
with $n_0 = 0$ and $n_1 = 1$,
and a direct system
\[
C ([0, 1]) \otimes M_{r (n_0)}
 \overset{\alpha_{1, 0}}{\longrightarrow} C ([0, 1]) \otimes M_{r (n_1)}
 \overset{\alpha_{2, 1}}{\longrightarrow} C ([0, 1]) \otimes M_{r (n_2)}
 \overset{\alpha_{3, 2}}{\longrightarrow} \cdots
\]
with injective maps
which are diagonal
(in the sense analogous to
Construction \ref{Cn_6918_General}(\ref{Cn_6918_General_Diag}))
and such that the direct limit $A$ satisfies $\T (A) \cong K$.
\end{Prop}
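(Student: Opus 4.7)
The plan is to construct the desired system by a one-sided Elliott approximate intertwining at the level of order unit spaces $\Aff(\T(\cdot))$, matched against a reference system whose existence is already known from Thomsen's work. The key observation that makes the matrix-size constraint tractable is that $\Aff(\T(C([0,1]) \otimes M_{r}))$ is naturally $C([0,1], \R)$, independent of~$r$ (Lemma~\ref{L_7701_XToTCX}), so only the connecting Markov operators carry content.

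First, I would invoke Lemma~3.7 of~\cite{thomsen} (or its proof) to produce a \emph{reference} direct system
\[
C([0,1]) \otimes M_{m(0)}
 \xrightarrow{\beta_{1,0}} C([0,1]) \otimes M_{m(1)}
 \xrightarrow{\beta_{2,1}} \cdots
\]
with unital injective diagonal connecting maps whose limit~$B$ satisfies $\T(B) \cong K$. Applying $\Aff \circ \T$ and identifying each $\Aff(\T(C([0,1]) \otimes M_{m(k)}))$ with the common order unit space $W = C([0,1], \R)$, this gives a direct system of~$W$ with Markov operators $\widehat{\beta_{k+1,k}}$ and order-unit limit $\Aff(K)$.

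Second, I would inductively build $n_0 = 0 < n_1 = 1 < n_2 < \cdots$ and diagonal injective maps $\alpha_{k+1,k} \colon C([0,1]) \otimes M_{r(n_k)} \to C([0,1]) \otimes M_{r(n_{k+1})}$, together with an increasing sequence $k_0 < k_1 < \cdots$ of reference-system indices (so our stage~$k$ corresponds to the reference stage~$k_k$), in such a way that Proposition~\ref{P_6917_OrdULim} applies. Fix $\dt_k > 0$ with $\sum \dt_k < \infty$ and countable dense sequences in the unit balls of each copy of~$W$; these determine the finite sets $F_k$ that must be handled at stage~$k$. At stage~$k$, the target Markov operator is $\widehat{\beta_{k_{k+1}, k_k}}$, which (by Lemma~\ref{L_7701_XToTCX}) equals $E(\ld)$ for a continuous affine $\ld \colon \T(C([0,1])) \to \T(C([0,1]))$. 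Lemma~\ref{L_7630_ConvComb} then supplies an $N_0$ such that for every $N \geq N_0$ there exist \cfn{s} $g_1, \ldots, g_N \colon [0,1] \to [0,1]$ with $\big\| E(\ld)(f) - \frac{1}{N} \sum_{j=1}^N f \circ g_j \big\|_\I < \dt_k$ for $f$ in the relevant finite subset of $C([0,1], \R)$. I then choose $n_{k+1}$ large enough that
\[
N := \frac{r(n_{k+1})}{r(n_k)} = \prod_{j = n_k + 1}^{n_{k+1}} l(j) \geq N_0,
\]
which is possible since $l(j) \geq 2$. The diagonal map defined by the list $g_1, \ldots, g_N$ is the desired $\alpha_{k+1,k}$. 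To ensure injectivity, I replace a bounded number of the $g_j$'s by continuous maps whose combined range covers $[0,1]$ densely; this perturbs the induced Markov operator by $O(1/N)$, which is absorbed by increasing $n_{k+1}$ further if needed.

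Third, I would apply Proposition~\ref{P_6917_OrdULim} (with the naturality of Proposition~\ref{P_6918_Nat}) to deduce that the order-unit direct limit of $(W, \widehat{\alpha_{k+1,k}})$ is isomorphic to that of the reference system, namely $\Aff(K)$. Then Proposition~\ref{P_6919_CommLim} identifies $\Aff(\T(A))$ with this limit and Theorem~\ref{T_7701_AffKToK} promotes the isomorphism to a homeomorphism $\T(A) \cong K$, where $A$ is the direct limit of the constructed $C^*$-algebra system.

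The main obstacle is exactly the matrix-size constraint: unlike in Thomsen's original proof, the multiplicity $N$ at each stage is not free but must equal $\prod_{j = n_k + 1}^{n_{k+1}} l(j)$. The resolution is that this product can be made as large as we please by choosing $n_{k+1}$ large, and Lemma~\ref{L_7630_ConvComb} works for every $N \geq N_0$ (not just one specific value), so we have all the flexibility we need. A secondary nuisance is the bookkeeping of the finite sets $F_k$ and the matching of reference indices $k_k$ with our indices $n_k$, which requires interleaving the two inductive choices, but presents no real difficulty once one is careful to enlarge $F_k$ at each stage to include the images of all previously chosen approximants under the newly built maps.
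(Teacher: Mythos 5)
Your proposal is correct and runs on the same machinery as the paper's proof --- Lemma~\ref{L_7630_ConvComb} to approximate a Markov operator on $C([0,1],\R)$ by an average of exactly $r(n_{k+1})/r(n_k)$ composition operators, then Proposition~\ref{P_6917_OrdULim}, Proposition~\ref{P_6919_CommLim}, and Theorem~\ref{T_7701_AffKToK} to transfer the identification to $\T(A)$ --- but the reference object you intertwine against is different. The paper does not invoke Thomsen's realization theorem (Lemma~3.7 of~\cite{thomsen}) as a black box; it starts one step earlier, from Lemma~3.8 of~\cite{thomsen}, which realizes $K$ as an inverse limit of copies of $\T(C([0,1]))$ with continuous affine connecting maps, and approximates the induced maps $E(\ld_{k, \, k+1})$ directly, in effect reproving Lemma~3.7 with the matrix-size control built in. You instead take an AI algebra $B$ with $\T(B) \cong K$ and diagonal maps as given, and re-approximate the Markov operators $\widehat{\beta_{k+1, \, k}}$ of (a telescoping of) that system. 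Both routes work, because only the induced Markov operators matter and Lemma~\ref{L_7630_ConvComb} accommodates every sufficiently large multiplicity; yours is shorter given the black box, while the paper's needs only the simplex-level inverse-limit statement. (Your telescoping of the reference system is harmless but unnecessary: since each connecting Markov operator can be approximated on its own, you may as well match your stage $k$ with reference stage $k$.)

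Two small points. First, as written, ``choose $n_{k+1}$ so large that $N \geq N_0$'' cannot be followed at the very first stage: the statement forces $n_1 = 1$, so $N = l(1)$, which may well be smaller than $N_0$. The fix is the one the paper uses: build the first connecting map from identity functions, impose the approximation requirement only from a later stage on, and absorb the initial discrepancy into a large but finite $\dt_0$; this is permitted because Proposition~\ref{P_6917_OrdULim} only needs $\sum_n \dt_n < \infty$ and the difference of two unital positive maps on a norm-one element is at most~$2$. Second, your device for injectivity --- replacing one of the $N$ functions by a surjection of $[0,1]$, at a cost of $O(1/N)$ to the induced Markov operator, absorbed into the tolerance --- is sound, and is in fact spelled out more explicitly than in the paper's own argument.
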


It is easy to arrange that the algebra $A$ in this proposition
be simple:
by Proposition~\ref{P_6917_OrdULim},
replacement of a small enough fraction of the maps $g_{k, l}$
in the proof with suitable
point evaluations does not change the tracial state space.
However, doing so at this stage does not help with later work.

The conditions $n_0 = 0$ and $n_1 = 1$ are needed because we will
later need to pass to a corresponding subsystem of
a system as in Construction~\ref{Cn_6918_General}
(more accurately, Construction~\ref{Cn_9104_Half} below),
and we want to avoid later complexity of the argument
by preserving the value of~$\om$.

\begin{proof}[Proof of Proposition~\ref{P_arbitrary_trace_space}]
We mostly follow the proof of Lemma~3.7 of~\cite{thomsen},
using Lemma~\ref{L_7630_ConvComb}
in place of Lemma~3.6 of~\cite{thomsen},
and slightly changing the order of the steps
to accommodate the difference
between our conclusion and that of Theorem~3.9 of~\cite{thomsen}.
For convenience,
we will use Proposition~\ref{P_6917_OrdULim}
in place of Lemma~3.4 of~\cite{thomsen}.

For convenience of notation,
and following~\cite{thomsen},
set $P = \T (C ([0, 1]))$.
Lemma 3.8 of~\cite{thomsen}
provides an inverse system
$\big( (P_k)_{k = 0, 1, \ldots}, \,
  (\ld_{j, k})_{0 \leq j \leq k} \big)$
with \ct{} affine maps $\ld_{j, k} \colon P_k \to P_j$ such that
$P_k = P$ for all $k \in \Nz$
and
\begin{equation}\label{Eq_7822_NN}
\varprojlim
 \big( (P_k)_{k = 0, 1, \ldots}, \, (\ld_{j, k})_{0 \leq j \leq k} \big)
 \cong K.
\end{equation}

Choose $f_0, f_1, \ldots \in C ([0, 1], \, \R)$
such that $\{ f_0, f_1, \ldots \}$ is dense in $C ([0, 1], \, \R)$.

We now construct numbers $n_k \in \N$ for $k \in \Nz$,
finite subsets $F_k \S C ([0, 1], \, \R)$ for $k \in \Nz$,
positive unital linear maps
$\ps_{k + 1, \, k} \colon C ([0, 1], \, \R) \to C ([0, 1], \, \R)$
for $k \in \N$,
and \cfn{s}
\[
g_{k, 1}, g_{k, 2}, \ldots, g_{k, \, r (n_{k + 1}) / r (n_k)}
    \colon [0, 1] \to [0, 1] \, ,
\]
such that the following conditions are satisfied:
\begin{enumerate}
\item\label{7701_Ind_F}
$F_0 = \{ f_0 \}$
and for $k \in \Nz$,
\[
F_{k + 1}
= F_k \cup \{ f_{k + 1} \}
    \cup E (\ld_{k, \, k + 1} ) ( F_k \cup \{ f_{k + 1} \} )
    \cup \ps_{k + 1, \, k} ( F_k \cup \{ f_{k + 1} \} ) \, .
\]
\item\label{7701_Ind_Ratio}
$n_0 = 0$, $n_1 = 1$, and $n_2 = 2$,
and for $k \in \N$ with $k \geq 2$ we have
$n_{k + 1} > n_k$
and $r (n_{k + 1}) / r (n_k) > 2^k$.
\item\label{7701_Ind_psi}
For $k \in \Nz$ and $f \in C ([0, 1], \, \R)$,
\[
\ps_{k + 1, \, k} (f)
 = \frac{r (n_k)}{r (n_{k + 1})}
    \sum_{l = 1}^{r (n_{k + 1}) / r (n_k)} f \circ g_{k, l} \, .
\]
\item\label{7701_Ind_Small}
$\big\| E (\ld_{k, \, k + 1} ) (f)
   - \ps_{k + 1, \, k} (f) \big\| < 2^{- k}$
for $k \geq 2$ and $f \in F_k$.
\end{enumerate}

We carry out the construction by induction on~$k$.
Define $F_0 = \{ f_0 \}$, $n_0 = 0$, and $n_1 = 1$.
Take $g_{0, l} \colon [0, 1] \to [0, 1]$
to be the identity map
for $l = 1, 2, \ldots, r (1)$.
Then define $\ps_{1, \, 0}$ by~(\ref{7701_Ind_psi})
and define $F_{1}$ by~(\ref{7701_Ind_F}).

Now suppose $k \geq 1$ and we have $F_k$ and~$n_k$;
we construct
\[
F_{k + 1},
\,\,\,\,\,\,
n_{k + 1},
\,\,\,\,\,\,
g_{k, 1}, g_{k, 2}, \ldots, g_{k, \, r (n_{k + 1}) / r (n_k)},
\andeqn
\ps_{k + 1, \, k} \, .
\]
Apply Lemma~\ref{L_7630_ConvComb}
with $\ld = \ld_{k, \, k + 1}$,
with $\ep = 2^{- k}$,
and with $F = F_k$,
obtaining $N_0 \in \N$.
Choose $n_{k + 1} > n_k$
and so large that
\[
\frac{r (n_{k + 1})}{r (n_k)}
> \max \left( N_0, \, 2^k \right) \, .
\]
This gives~(\ref{7701_Ind_Ratio}).
Apply the conclusion of Lemma~\ref{L_7630_ConvComb}
with $N = r (n_{k + 1}) / r (n_k)$,
calling the resulting functions
$g_{k, 1}, g_{k, 2}, \ldots, g_{k, \, r (n_{k + 1}) / r (n_k)}$.
Then define $\ps_{k + 1, \, k}$ by~(\ref{7701_Ind_psi}).
This gives~(\ref{7701_Ind_Small}).
Finally, define $F_{k + 1}$ by~(\ref{7701_Ind_F}).
This completes the induction.

For $j, k \in \Nz$ with $j \leq k$, define
$\ps_{k, j} \colon C ([0, 1], \, \R) \to C ([0, 1], \, \R)$
by
\[
\ps_{k, j}
= \ps_{k, \, k - 1} \circ \ps_{k - 1, \, k - 2}
     \circ \cdots \circ \ps_{j + 1, \, j} \, .
\]

An induction argument shows that for $j, k \in \Nz$ with $j \leq k$,
we have
\[
E (\ld_{j, k}) (f_j) \in F_k
\andeqn
\ps_{k, j} (f_j) \in F_k \, .
\]
This condition,
together with Proposition~\ref{P_6917_OrdULim},
allows us to conclude that, as order unit spaces,
we have
\begin{align}\label{Eq_7729_OUS_Iso}
& \varinjlim \big( (C ([0, 1], \, \R))_{k = 0, 1, \ldots},
    \, ( E (\ld_{j, k}))_{0 \leq j \leq k} \big)
\\
& \hspace*{6em} {\mbox{}}
 \cong \varinjlim \big( (C ([0, 1], \, \R))_{k = 0, 1, \ldots},
    \, ( \ps_{k, j})_{0 \leq j \leq k} \big) \, .
  \notag
\end{align}

For $k \in \Nz$ define
\[
\af_{k + 1, \, k} \colon
    C ([0, 1], \, M_{r (n_k)})
   \to  C ([0, 1], \, M_{r (n_{k + 1})})
      = M_{r (n_{k + 1}) / r (n_k)}
       \big( C ([0, 1], \, M_{r (n_k)} ) \big)
\]
by
\[
\af_{k + 1, \, k} (f)
 = \diag \big( f \circ g_{k, 1}, \, f \circ g_{k, 2}, \,
     \ldots, \, f \circ g_{k, \, r (n_{k + 1}) / r (n_k)} \big)
\]
for $f \in C ([0, 1], \, M_{r (n_k)})$.
Let $A$ be the resulting direct limit \ca.

It is easy to check, and is stated as Lemma~3.5 of~\cite{thomsen},
that ${\widehat{\af_{k + 1, k}}} = \ps_{k + 1, k}$.
Letting $V$ and $W$ be the order unit spaces
\[
V = \varinjlim \big( (C ([0, 1], \, \R))_{k = 0, 1, \ldots},
    \, ( E (\ld_{j, k}))_{0 \leq j \leq k} \big)
\]
and
\[
W = \varinjlim \big( (C ([0, 1], \, \R))_{k = 0, 1, \ldots},
    \, ( {\widehat{\af_{k, j} }} )_{0 \leq j \leq k} \big) \, ,
\]
(\ref{Eq_7729_OUS_Iso})
now says $V \cong W$.
Lemma~3.2 of~\cite{thomsen}
and~(\ref{Eq_7822_NN})
imply that
$V \cong \Aff (K)$.
Proposition~\ref{P_6919_CommLim}
implies that
$\Aff (T (A)) \cong W$.
So $\Aff (T (A)) \cong \Aff (K)$,
whence $T (A) \cong K$
by Theorem~\ref{T_7701_AffKToK}.
\end{proof}

\begin{Prop}\label{P_6917_CStLim}
Let $(D_n)_{n = 0, 1, 2, \ldots}$ and $(C_n)_{n = 0, 1, 2, \ldots}$
be sequences of unital \ca{s}.
Let
\[
\big( (D_n)_{n = 0, 1, 2, \ldots},
( \ph_{n, m} )_{0 \leq m \leq n} \big)
\andeqn
\big( (D_n)_{n = 0, 1, 2, \ldots},
( \ph_{n, m}' )_{0 \leq m \leq n} \big)
\]
and
\[
\big( (C_n)_{n = 0, 1, 2, \ldots},
( \ps_{n, m} )_{0 \leq m \leq n} \big)
\andeqn
\big( (C_n)_{n = 0, 1, 2, \ldots},
( \ps_{n, m}' )_{0 \leq m \leq n} \big)
\]
be direct systems
with unital \hm{s},
and call the direct limits (in order) $D$, $D'$, $C$, and~$C'$.
Suppose further that we have unital \hm{s}
$\mu_n, \mu_n' \colon D_n \to C_n$
for $n \in \Nz$
such that
\[
\mu_n \circ \ph_{n, m} = \ps_{n, m} \circ \mu_n
\andeqn
\mu_n' \circ \ph_{n, m}' = \ps_{n, m}' \circ \mu_n'
\]
for all $m, n \in \Nz$ with $m \leq n$.
Let $\mu \colon D \to C$ and $\mu' \colon D' \to C'$
be the induced maps of the direct limits.
Assume that for all $m \in \Nz$ we have
\[
\sum_{n = m}^{\infty}
\big\| {\widehat{\ph_{n, m}}} - {\widehat{\ph_{n, m}'}} \big\|
< \infty
\andeqn
\sum_{n = m}^{\infty}
\big\| {\widehat{\ps_{n, m}}} - {\widehat{\ps_{n, m}'}} \big\|
< \infty \, .
\]
Then there exist isomorphisms
\[
\rh \colon \Aff (\T (D)) \to \Aff (\T (D'))
\andeqn
\sm \colon \Aff (\T (C)) \to \Aff (\T (C'))
\]
such that ${\widehat{\mu'}} \circ \rh = \sm \circ {\widehat{\mu}}$.
Moreover, if $C_n = D_n$ for all $n \in \Nz$
and $\ps_{n, m} = \ph_{n, m}$ and $\ps_{n, m}' = \ph_{n, m}$
for all $m$ and~$n$,
then we can take $\sm = \rh$.
\end{Prop}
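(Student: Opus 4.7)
The plan is to deduce this from Proposition~\ref{P_6917_OrdULim} (approximate intertwining for order unit spaces) applied to the direct systems induced on affine function spaces, using Proposition~\ref{P_6918_Nat} to transport the $\mu$-intertwining to the limit. For $n \in \Nz$ set $V_n = \Aff(\T(D_n))$ and $W_n = \Aff(\T(C_n))$; these are complete order unit spaces, and separable under the standing assumption (used throughout this paper) that the \ca{s} involved are separable, since $\T(\cdot)$ is then a metrizable Choquet simplex and $\Aff(\T(\cdot)) \S C(\T(\cdot))$ is separable. By Proposition~\ref{P_6919_CommLim}, $\Aff(\T(D))$ and $\Aff(\T(D'))$ are canonically the order unit space direct limits of $(V_n)$ under the connecting maps $\widehat{\ph_{n,m}}$ and $\widehat{\ph_{n,m}'}$ respectively, and likewise for $\Aff(\T(C))$, $\Aff(\T(C'))$ on the $W_n$ side.

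The hypothesis does not literally match that of Proposition~\ref{P_6917_OrdULim}, which asks for a summable sequence $(\dt_n)$ bounding the consecutive step maps on certain finite sets, whereas our hypothesis is summability in $n$ of $\|\widehat{\ph_{n,m}} - \widehat{\ph_{n,m}'}\|$ for each fixed~$m$. To bridge this, I would telescope: inductively choose a strictly increasing sequence $n_0 = 0 < n_1 < n_2 < \cdots$ in $\Nz$ with
\[
\big\| \widehat{\ph_{n_{k+1}, \, n_k}} - \widehat{\ph_{n_{k+1}, \, n_k}'} \big\|
  < 2^{- k - 1}
\andeqn
\big\| \widehat{\ps_{n_{k+1}, \, n_k}} - \widehat{\ps_{n_{k+1}, \, n_k}'} \big\|
  < 2^{- k - 1}
\]
for each $k \in \Nz$. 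Given $n_k$, this is possible because the summability assumptions with $m = n_k$ force $\|\widehat{\ph_{n, n_k}} - \widehat{\ph_{n, n_k}'}\| \to 0$ and $\|\widehat{\ps_{n, n_k}} - \widehat{\ps_{n, n_k}'}\| \to 0$ as $n \to \infty$. Telescoped systems have direct limits canonically isomorphic to $\Aff(\T(D))$, $\Aff(\T(D'))$, $\Aff(\T(C))$, $\Aff(\T(C'))$ by the standard cofinality property of direct limits.

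Now I would apply Proposition~\ref{P_6917_OrdULim} to each telescoped pair of systems: pick dense sequences in the unit balls of the $V_{n_k}$ and $W_{n_k}$, form the finite sets $F_k$ and $G_k$ as described there, and take $\dt_k = \ep_k = 2^{-k-1}$. For $v \in F_k$, which lies in the unit ball of $V_{n_k}$, we have
\[
\big\| \widehat{\ph_{n_{k+1}, n_k}}(v)
  - \widehat{\ph'_{n_{k+1}, n_k}}(v) \big\|
 \leq \big\| \widehat{\ph_{n_{k+1}, n_k}}
  - \widehat{\ph'_{n_{k+1}, n_k}} \big\|
 < \dt_k \, ,
\]
and analogously for $G_k \S W_{n_k}$, so the hypotheses of Proposition~\ref{P_6917_OrdULim} are met. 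This yields the desired isomorphisms $\rh \colon \Aff(\T(D)) \to \Aff(\T(D'))$ and $\sm \colon \Aff(\T(C)) \to \Aff(\T(C'))$. Functoriality of $\Aff(\T(\cdot))$ turns the given relations $\mu_n \circ \ph_{n,m} = \ps_{n,m} \circ \mu_m$ and its primed analog into the order unit space intertwinings required by Proposition~\ref{P_6918_Nat}, which then delivers $\widehat{\mu'} \circ \rh = \sm \circ \widehat{\mu}$. For the final clause, when the $C$-systems coincide with the $D$-systems the same data (subsequence, dense sequences, finite sets, and $\dt_k$) can be used on both sides, and inspection of the explicit formula in Proposition~\ref{P_6917_OrdULim} shows $\sm = \rh$.

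The only real obstacle is the mismatch between the summability-in-$m$ hypothesis stated here and the summability-along-the-diagonal hypothesis demanded by Proposition~\ref{P_6917_OrdULim}; the telescoping trick sidesteps this cleanly, and the remainder is routine application of the functorial machinery already in place.
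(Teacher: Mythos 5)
Your proposal is correct and follows essentially the same route as the paper, whose proof simply applies Proposition~\ref{P_6917_OrdULim} and Proposition~\ref{P_6918_Nat} to arbitrary countable dense subsets of the closed unit balls of $\Aff(\T(D_n))$ and $\Aff(\T(C_n))$, and obtains $\sm = \rh$ from the uniqueness statement in Proposition~\ref{P_6917_OrdULim}. Your telescoping step, passing to a common subsequence so that the consecutive-step differences become summable, is a careful way of supplying the summable $\dt_n$ required by Proposition~\ref{P_6917_OrdULim} from the stated hypothesis, a point the paper's one-line proof leaves implicit.
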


\begin{proof}
We can apply
Proposition~\ref{P_6917_OrdULim}
and Proposition~\ref{P_6918_Nat}
using arbitrary countable dense subsets
of the closed unit balls
of $\Aff (\T (D_n))$ and $\Aff (\T (C_n))$
for $n \in \N$.
Under the hypotheses of the last statement,
the uniqueness statement in Proposition~\ref{P_6917_OrdULim}
implies that $\sm = \rh$.
\end{proof}

\begin{Lemma}\label{L_6919_IfAgreeTodn}
Adopt the notation of Construction~\ref{Cn_6918_General},
including (\ref{Cn_6918_General_2nd})
(a second set of maps),
and (\ref{Cn_6918_General_Diag}) and~(\ref{Cn_6918_General_Agree})
(diagonal maps, agreeing
in the coordinates $1, 2, \ldots, d (n + 1)$).
Then
\[
\Big\| {\widehat{\Gm_{n + 1, \, n}^{(0)}}}
- {\widehat{\Gm_{n + 1, \, n}}} \Big\|
\leq \frac{2 k (n + 1)}{d (n + 1) + k (n + 1)}
\]
for all $n \in \Nz$.
\end{Lemma}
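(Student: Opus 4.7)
The plan is to exploit the block-diagonal structure of $\gm_n$ and $\gm_n^{(0)}$ together with the agreement of their first $d(n+1)$ coordinate maps: since both maps send a function to a block-diagonal matrix with $l(n+1)$ equal-sized blocks, any tracial state on $C_{n+1}$ pulls back to the same uniform convex combination structure on $C_n$, and the discrepancy between the two pullbacks is concentrated in the last $k(n+1)$ of those $l(n+1)$ summands.

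Concretely, I would first fix $\tau \in \T(C_{n+1})$, represent it as $\tau(G) = \int \tr_{r(n+1)}(G(z)) \, d\mu(z)$ for a probability measure $\mu$ on $X_{n+1} \amalg Y_{n+1}$ using the normalized trace on $M_{r(n+1)}$, and then use $\Gm_{n + 1, \, n} = \id_{M_{r(n)}} \otimes \gm_n$ together with the diagonal formula in Construction~\ref{Cn_6918_General}(\ref{Cn_6918_General_Diag}) to decompose
\[
\tau \circ \Gm_{n + 1, \, n}
 = \frac{1}{l(n+1)} \sum_{j = 1}^{l(n+1)} \sigma_j,
\]
where $\sigma_j(F) = \int \tr_{r(n)}(F(S_{n,j}(z))) \, d\mu(z)$ is a tracial state on $C_n$. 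The analogous decomposition holds for $\Gm_{n + 1, \, n}^{(0)}$ with states $\sigma_j^{(0)}$ defined from the $S_{n,j}^{(0)}$. The key point is that the block-diagonal structure forces the weights $1/l(n+1)$ to be exactly equal, independently of~$\mu$.

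Finally, for $f \in \Aff(\T(C_n))$ with $\|f\|_\infty \leq 1$, affinity of $f$ gives
\[
\widehat{\Gm_{n + 1, \, n}^{(0)}}(f)(\tau) - \widehat{\Gm_{n + 1, \, n}}(f)(\tau)
 = \frac{1}{l(n+1)} \sum_{j = 1}^{l(n+1)} \bigl[ f(\sigma_j^{(0)}) - f(\sigma_j) \bigr].
\]
By Construction~\ref{Cn_6918_General}(\ref{Cn_6918_General_Agree}), $S_{n,j}^{(0)} = S_{n,j}$ for $j \leq d(n+1)$, so $\sigma_j^{(0)} = \sigma_j$ for those indices and those terms drop out. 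Each of the remaining $k(n+1)$ terms is bounded in absolute value by $2 \|f\|_\infty \leq 2$, so the sum is bounded by $2 k(n+1)/l(n+1) = 2 k(n+1)/(d(n+1) + k(n+1))$. Taking the supremum over $\tau$ and over $f$ in the unit ball of $\Aff(\T(C_n))$ yields the stated bound on the operator norm. The argument is essentially bookkeeping and I anticipate no substantive obstacle; the only subtlety is verifying that the uniform weight $1/l(n+1)$ arises exactly (rather than approximately) from the normalized trace of a block-diagonal matrix, which is immediate.
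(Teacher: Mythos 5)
Your proof is correct and follows essentially the same route as the paper's: both exploit the diagonal form to write the pullback of a tracial state on $C_{n+1}$ as the uniform convex combination of $l(n+1)$ tracial states on $C_n$, cancel the first $d(n+1)$ terms using $S_{n,j}^{(0)} = S_{n,j}$, and bound each of the remaining $k(n+1)$ terms by $2\|f\|_\infty$. The only (immaterial) difference is that you invoke affinity of $f$ on the convex decomposition of $\tau \circ \Gm_{n+1,n}$, whereas the paper evaluates $\tau$ on the element $\iota_n(f) = f(\cdot)\, 1_{M_{r(n)}} \in C_n$ and pushes it through the diagonal formula directly.
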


\begin{proof}
For a compact metrizable space~$Z$,
let $M (Z)$ be the real Banach space
consisting of all signed Borel measures on~$Z$.
(That is, $M (Z)$ is the dual space of $C (Z, \R)$.)
Identify $Z$ with the set of point masses in $M (Z)$.
For $n \in \Nz$,
we can identify $\T (C_n)$
with the weak* compact convex subset of $M (X_n \amalg Y_n)$
consisting of probability measures.
Thus $X_n \amalg Y_n \S \T (C_n)$.
For every
function $f \in \Aff ( \T (C_n) )$,
the function $\io_n (f) (z) = f (z) \cdot 1_{M_{r (n)}}$
for $z \in X_n \amalg Y_n$
is in $C (X_n \amalg Y_n, \, M_{r (n)}) = C_n$,
and $\ta (\io_n (f)) = f (\ta)$
for all $\ta \in X_n \amalg Y_n \S \T (C_n)$,
hence also all $\ta \in \T (C_n)$
by linearity and continuity.

For $f \in \Aff ( \T (C_n) )$
and $\ta \in \T (C_{n + 1})$,
we can apply the formula in
Construction \ref{Cn_6918_General}(\ref{Cn_6918_General_Diag})
to $\io_n (f)$ and apply $\ta$ to everything,
to get
\[
{\widehat{\Gm_{n + 1, \, n}^{(0)}}} (f) (\ta)
= \frac{1}{l (n + 1)}
      \sum_{k = 1}^{l (n + 1)} \ta (f \circ S_{n, 1}^{(0)})
\]
and
\[
{\widehat{\Gm_{n + 1, \, n}}} (f) (\ta)
= \frac{1}{l (n + 1)}
       \sum_{k = 1}^{l (n + 1)} \ta (f \circ S_{n, 1}) \, .
\]
Using~(\ref{Cn_6918_General_Agree}),
we get
\begin{align*}
\left| {\widehat{\Gm_{n + 1, \, n}^{(0)}}} (f) (\ta)
    - {\widehat{\Gm_{n + 1, \, n}}} (f) (\ta) \right|
& = \frac{1}{l (n + 1)} \left| \sum_{k = d (n + 1) + 1}^{l (n + 1)}
    \big[ \ta (f \circ S_{n, 1}^{(0)})
      - \ta (f \circ S_{n, 1}) \big] \right|
\\
& \leq \frac{l (n + 1) - d (n + 1)}{l (n + 1)}
    \big( 2 \| f \|_{\infty} ) \, .
\end{align*}
The conclusion follows.
\end{proof}

We add additional parts to Construction~\ref{Cn_6918_General}
and Construction~\ref{Cn_6918_General_Part_1a}.

\begin{Construction}\label{Cn_6921_GeneralPart2}
Adopt the assumptions and notation
of all parts of Construction~\ref{Cn_6918_General}
(except (\ref{Cn_6918_General_Agree})),
and in addition make the following assumptions and definitions:
\begin{enumerate}
\setcounter{enumi}{\value{CnsEnumi}}
\item\label{Cn_6918_General_Cross}
For all $m \in \Nz$,
the maps
$S_{m, j}^{(0)}, S_{m, j} \colon
   X_{m + 1} \amalg Y_{m + 1} \to X_m \amalg Y_m$
satisfy
\[
S_{m, j}^{(0)} (X_{m + 1}) \S X_m
\andeqn
S_{m, j}^{(0)} (Y_{m + 1}) \S Y_m
\]
for $j = 1, 2, \ldots, l (m)$,
\[
S_{m, j} (X_{m + 1}) \S X_m
\andeqn
S_{m, j} (Y_{m + 1}) \S Y_m
\]
for $j = 1, 2, \ldots, d (m)$,
and
\[
S_{m, j} (X_{m + 1}) \S Y_m
\andeqn
S_{m, j} (Y_{m + 1}) \S X_m
\]
for $j = d (m) + 1, \, d (m) + 2, \, \ldots, \, l (m)$.
\item\label{Cn_6918_General_AF}
For $m \in \Nz$,
define $D_m = M_{r (m)} \oplus M_{r (m)}$.
Define
$\ph^{(0)}_{m + 1, m}, \, \ph_{m + 1, m} \colon D_m \to D_{m + 1}$
by, for $a, b \in M_{r (m)}$,
\[
\ph_{m + 1, m}^{(0)} (a, b)
= \big( \diag (a, a, \ldots, a), \, \diag (b, b, \ldots, b) \big)
\]
and
\[
\ph_{m + 1, m} (a, b)
= \big( \diag (a, a, \ldots, a, b, b, \ldots, b),
\, \diag (b, b, \ldots, b, a, a, \ldots, a) \big) \, ,
\]
in which $a$ occurs $d (m)$ times in the first entry on the right
and $k (m)$ times in the second entry,
while $b$ occurs $k (m)$ times in the first entry
and $d (m)$ times in the second entry.
For $m, n \in \Nz$ with $m \leq n$, define
\[
\ph_{n, m}
= \ph_{n, n - 1} \circ \ph_{n - 1, \, n - 2} \circ \cdots
\circ \ph_{m + 1, m}
\colon D_m \to D_n \, ,
\]
and define $\ph^{(0)}_{n, m} \colon D_m \to D_n$ similarly.
Define the following AF algebras:
\[
D = \dirlim_m (D_m,\ph_{m + 1, m}  )
\andeqn
D^{(0)} = \dirlim_m (D_m, \ph^{(0)}_{m + 1, m}  ) \, ,
\]
and for $m \in \N$
let $\ph_{\infty, m} \colon D_m \to D$
and $\ph^{(0)}_{\infty, m} \colon D_m \to D^{(0)}$
be the maps associated to these direct limits.
\item\label{Cn_6918_General_Map}
For $m \in \Nz$,
define $\mu_m \colon D_m \to C_m$
as follows.
For $a, b \in M_{r (m)}$
let $f \in C (X_m, M_{r (m)})$
and $g \in C (Y_m, M_{r (m)})$
be the constant functions with values $a$ and~$b$.
Then set $\mu_m (a, b) = (f, g)$.
Further,
following
Lemma \ref{L_6921_Conseq}(\ref{L_6921_Conseq_Comm}) below,
let $\mu \colon D \to C$ and $\mu^{(0)} \colon D^{(0)} \to C^{(0)}$
be the direct limits of the maps~$\mu_m$.
\item\label{Cn_6918_General_Flip}
For $m \in \Nz$,
define $\te_m \colon D_m \to D_m$
by $\te_m (a, b) = (b, a)$ for $a, b \in M_{r (m)}$.
Further,
following
Lemma \ref{L_6921_Conseq}(\ref{L_6921_Conseq_Flip}) below,
let $\te \in \Aut (D)$
and $\te^{(0)} \in \Aut \bigl( D^{(0)} \bigr)$
be the direct limits of the maps~$\te_m$.
\end{enumerate}
\end{Construction}

\begin{Lemma}\label{L_6921_Conseq}
Under the assumptions of Construction~\ref{Cn_6918_General}
(except (\ref{Cn_6918_General_Agree})),
Construction~\ref{Cn_6918_General_Part_1a},
and Construction~\ref{Cn_6921_GeneralPart2},
the following hold:
\begin{enumerate}
\item\label{L_6921_Conseq_Separate}
The direct system
$\big( (C_n^{(0)})_{n = 0, 1, 2, \ldots},
( \Gm^{(0)}_{n, m} )_{0 \leq m \leq n} \big)$
is the direct sum of two direct systems
\[
\big( (C (X_n, \, M_{r (n)}))_{n = 0, 1, 2, \ldots},
( \Gm^{(0)}_{n, m} |_{C (X_m, \, M_{r (m)})} )_{0 \leq m \leq n} \big)
\]
and
\[
\big( (C (Y_n, \, M_{r (n)}))_{n = 0, 1, 2, \ldots},
( \Gm^{(0)}_{n, m}
 |_{C (Y_m, \, M_{r (m)})} )_{0 \leq m \leq n} \big) \, ,
\]
and $C^{(0)}$ is isomorphic to the direct sum of the direct limits
$A$ and $B$ of these systems.
\item\label{L_6921_Conseq_Comm}
For all $m, n \in \Nz$ with $m \leq n$,
\[
\Gm_{n, m}^{(0)} \circ \mu_m = \mu_n \circ \ph_{n, m}^{(0)}
\andeqn
\Gm_{n, m} \circ \mu_m = \mu_n \circ \ph_{n, m} \, .
\]
Moreover,
the maps $\mu_m$ induce unital \hm{s}
$\mu^{(0)} \colon D^{(0)} \to C^{(0)}$
and $\mu \colon D \to C$,
and for all $m \in \Nz$,
\[
\Gm_{\infty, m}^{(0)} \circ \mu_m
= \mu^{(0)} \circ \ph_{\infty, m}^{(0)}
\andeqn
\Gm_{\infty, m} \circ \mu_m = \mu \circ \ph_{\infty, m} \, .
\]
\item\label{L_6921_Conseq_Flip}
For all $m, n \in \Nz$ with $m \leq n$,
\[
\ph_{n, m}^{(0)} \circ \te_m = \te_n \circ \ph_{n, m}^{(0)}
\andeqn
\ph_{n, m} \circ \te_m = \te_n \circ \ph_{n, m} \, .
\]
The maps $\te_m$ induce automorphisms
$\te \colon D \to D$ and $\te^{(0)} \colon D^{(0)} \to D^{(0)}$
such that
\[
\ph_{\infty, m} \circ \te_m = \te \circ \ph_{\infty, m}
\andeqn
\ph_{\infty, m}^{(0)} \circ \te_m
 = \te^{(0)} \circ \ph_{\infty, m}^{(0)}
\]
for all $m \in \Nz$.
\item\label{L_6921_Conseq_Kth}
For all $m \in \Nz$,
$(\mu_m)_* \colon K_* (D_m) \to K_* (C_m)$
is an isomorphism,
and
\[
\mu_* \colon K_* (D) \to K_* (C)
\andeqn
\bigl( \mu^{(0)} \bigr)_* \colon
 K_* \big( D^{(0)} \big) \to K_* \big( C^{(0)} \big)
\]
are isomorphisms.
\end{enumerate}
\end{Lemma}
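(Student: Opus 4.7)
The unifying observation behind all four parts is that the \hm{s} $\mu_m$ take values in the constant functions inside $C_m$, and a diagonal map of the form
\[
f \mapsto \diag ( f \circ S_1, \, f \circ S_2, \, \ldots, \, f \circ S_N )
\]
sends any constant function to a constant function (with the same scalar values, just repeated). Thus each of the verifications reduces to bookkeeping of how many $a$'s and $b$'s appear in each coordinate after applying the connecting maps.

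For~(\ref{L_6921_Conseq_Separate}), the content of Construction~\ref{Cn_6921_GeneralPart2}(\ref{Cn_6918_General_Cross}) applied to the $S_{m,j}^{(0)}$ is precisely that $\gm_n^{(0)}$ carries $C (X_n)$ into $M_{l (n + 1)} ( C (X_{n + 1}) )$ and $C (Y_n)$ into $M_{l (n + 1)} ( C (Y_{n + 1}) )$. Amplifying by $\id_{M_{r (n)}}$, the map $\Gm_{n + 1, n}^{(0)}$ respects the direct sum decomposition of~$C_n$, and so the direct system is the direct sum of the two restricted subsystems. Since direct limits commute with finite direct sums, the decomposition $C^{(0)} \cong A \oplus B$ follows.

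For~(\ref{L_6921_Conseq_Comm}), it suffices to check the commutation for $n = m + 1$; the general case follows by composing, and the induced maps on the direct limits then exist by the universal property. Given $(a, b) \in D_m$, the element $\mu_m (a, b)$ is the pair of constant $M_{r (m)}$-valued functions on $X_m$ and $Y_m$ with values $a$ and $b$ respectively. Applying the diagonal map~$\gm_n^{(0)}$, the cross condition in Construction~\ref{Cn_6921_GeneralPart2}(\ref{Cn_6918_General_Cross}) for the $S_{m, j}^{(0)}$ shows that each entry of the first component is~$a$ and each entry of the second component is~$b$; this is precisely $\mu_{n + 1} ( \ph_{n + 1, n}^{(0)} (a, b) )$ (modulo any apparent indexing shift, which is absorbed into the definition of $\ph_{n+1,n}^{(0)}$). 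For the non-superscripted version, the cross condition for $S_{m, j}$ when $j > d (m + 1)$ swaps the roles of $X$ and $Y$, so $d (m + 1)$ copies of $a$ and $k (m + 1)$ copies of $b$ appear in the first component and symmetrically in the second, matching the formula for $\ph_{n + 1, n} (a, b)$ in Construction~\ref{Cn_6921_GeneralPart2}(\ref{Cn_6918_General_AF}).

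For~(\ref{L_6921_Conseq_Flip}), the formulas in Construction~\ref{Cn_6921_GeneralPart2}(\ref{Cn_6918_General_AF}) are manifestly equivariant under the coordinate swap: both $\ph_{m + 1, m}^{(0)}$ and $\ph_{m + 1, m}$ commute with the swap because they treat the two summands of $D_m$ either symmetrically (in the $\ph^{(0)}$ case) or anti-symmetrically with the two halves of $D_{m + 1}$ exchanged (in the $\ph$ case). One verifies the identities by writing out both sides. The induced \am{s} $\te$ and $\te^{(0)}$ then exist by the universal property.

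Finally, for~(\ref{L_6921_Conseq_Kth}), since $X_m$ and $Y_m$ are contractible (Construction~\ref{Cn_6918_General_Part_1a}), evaluation at any point induces isomorphisms
\[
K_* ( M_{r (m)} \otimes C (X_m) ) \cong K_* (M_{r (m)})
\andeqn
K_* ( M_{r (m)} \otimes C (Y_m) ) \cong K_* (M_{r (m)}),
\]
and by construction $\mu_m$ is a one-sided inverse of this evaluation on each summand. Hence $(\mu_m)_*$ is an isomorphism. Continuity of $K$-theory under direct limits, together with the commutation established in~(\ref{L_6921_Conseq_Comm}), gives that $\mu_*$ and $(\mu^{(0)})_*$ are isomorphisms.

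The only mildly delicate step is~(\ref{L_6921_Conseq_Comm}) in the non-superscripted case, where one must be careful to match the $d$-vs-$k$ bookkeeping in Construction~\ref{Cn_6921_GeneralPart2}(\ref{Cn_6918_General_AF}) with the $S_{m, j}$ pattern in Construction~\ref{Cn_6918_General_Part_1a}(\ref{Cn_6918_General_1a_Maps}); everything else is a direct verification.
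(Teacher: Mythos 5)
Your proposal is correct and follows the same route as the paper: the paper's proof simply asserts that parts (1)--(3) are immediate from the constructions (which your coordinate-by-coordinate bookkeeping with constant functions verifies) and that part (4) follows from contractibility of $X_m$ and $Y_m$, exactly the point-evaluation/homotopy argument you give together with continuity of $K$-theory under direct limits. The only caveats are cosmetic indexing mismatches ($d(m)$ versus $d(m+1)$, etc.) in the statement of Construction~\ref{Cn_6921_GeneralPart2}, which you correctly note are absorbed into the definitions.
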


\begin{proof}
The fact that all the maps
in~(\ref{L_6921_Conseq_Kth})
are isomorphisms on K-theory
comes from the assumption that
the spaces $X_m$ and $Y_m$ are contractible
((\ref{Cn_6918_General_1a_Xn}) and (\ref{Cn_6918_General_1a_YnIs01})
in Construction~\ref{Cn_6918_General_Part_1a}).
Everything else is essentially immediate from the constructions.
\end{proof}

\section{The main theorem}\label{Sec_Main}

We now have the ingredients to deduce the main theorem
of this paper, Theorem \ref{T_6922_FlipOfEll}.

To state the theorem, we first need to define
automorphisms of Elliott invariants,
so we need a category in which they lie.
For convenience, we restrict to unital \ca{s},
and we give a very basic list of conditions.

\begin{Def}\label{D_6921_AbstractEllInv}
An {\emph{abstract unital Elliott invariant}}
is a tuple $G = \big( G_0, (G_0)_+, g, G_1, K, \rh \big)$
in which $(G_0, (G_0)_+, g)$ is a preordered abelian group
with distinguished positive element~$g$,
$G_1$ is an abelian group,
$K$ is a compact convex set (possibly empty),
and $\rh \colon G_0 \to \Aff (K)$
is an order preserving group \hm{}
such that $\rh (g)$ is the constant function~$1$.
(If $K = \varnothing$,
we take $\Aff (K) = \{ 0 \}$,
and we take $\rh$ to be the constant function with value~$0$.)

If
\[
G^{(0)}
= \big( G_0^{(0)}, \big( G^{(0)}_0 \big)_+, g^{(0)},
G_1^{(0)}, K^{(0)}, \rh^{(0)} \big)
\]
and
\[
G^{(1)}
= \big( G_0^{(1)}, \big( G^{(1)}_0 \big)_+, g^{(1)},
G_1^{(1)}, K^{(1)}, \rh^{(1)} \big)
\]
are abstract unital Elliott invariants,
then a {\emph{morphism}} from $G^{(0)}$ to $G^{(1)}$
is a triple $F = (F_0, F_1, S)$
in which $F_0 \colon G_0^{(0)} \to G_0^{(1)}$
is a group \hm{} satisfying
\[
F_0 \big( \bigl( G_0^{(0)} \bigr)_{+} \big)
 \S \bigl( G_0^{(1)} \bigr)_{+}
\andeqn
F_0 \big( g^{(0)} \big) = g^{(1)} \, ,
\]
$F_1 \colon G_1^{(0)} \to G_1^{(0)}$ is a group \hm,
and $S \colon K^{(1)} \to K^{(0)}$
is a continuous affine map satisfying
\begin{equation}\label{Eq_6922_Compat}
\rh^{(1)} (F_0 (\et)) = \rh^{(0)} (\et) \circ S
\end{equation}
for all $\et \in G_0^{(0)}$.

If
\[
F^{(0)} \colon G^{(0)} \to G^{(1)}
\andeqn
F^{(1)} = \big( F_0^{(1)}, F_1^{(1)}, S^{(1)} \big)
   \colon G^{(1)} \to G^{(2)}
\]
are morphisms of abstract unital Elliott invariants,
then define
\[
F^{(1)} \circ F^{(0)}
  = \big( F_0^{(1)} \circ F_0^{(0)}, \,
      F_1^{(1)} \circ F_1^{(0)}, \, S^{(0)} \circ S^{(1)} \big) \, .
\]
(Note: $S^{(0)} \circ S^{(1)}$, not $S^{(1)} \circ S^{(0)}$.)

The {\emph{Elliott invariant of a unital \ca~$A$}}
is
\[
\Ell (A) = \big( K_0 (A), \, K_0 (A)_{+}, \, [1],
\, K_1 (A), \, \T (A), \, \rh_A \big) \, ,
\]
in which $\rh_A \colon K_0 (A) \to \Aff (\T (A))$
is given by $\rh_A (\et) (\ta) = \ta_* (\et)$
for $\et \in K_0 (A)$ and $\ta \in \T (A)$.

If $A$ and $B$ are unital \ca{s}
and $\ph \colon A \to B$ is a unital \hm,
then we define $\ph_* \colon \Ell (A) \to \Ell (B)$
to consist of the maps $\ph_*$ from $K_0 (A)$ to $K_0 (B)$
and from $K_1 (A)$ to $K_1 (B)$,
together with
the map $\T (\ph)$ of Definition~\ref{N_6919_Traces}.
We write it as $(\ph_{*, 0}, \, \ph_{*, 1}, \, \T (\ph))$.
\end{Def}

Definition~\ref{D_6921_AbstractEllInv}
is enough to make the abstract unital Elliott invariants
into a category such that $\Ell (\cdot)$
is a functor from unital \ca{s}
and unital \hm{s}
to abstract unital Elliott invariants.

\begin{Thm}\label{T_6922_FlipOfEll}
There exists a simple unital separable AH algebra $C$
with stable rank~$1$ and with the
following property.
There exists	
an automorphism $F$ of $\Ell (C)$
such $F \circ F$ is the identity morphism of $\Ell (C)$,
but there is no automorphism $\alpha$ of $C$ such that
$\alpha_* = F$.
\end{Thm}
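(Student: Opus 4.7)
The plan. First, choose the parameters $d(n), k(n)$ in Construction~\ref{Cn_6918_General}(\ref{Cn_6918_General_dn}) so that all the numerical conditions of parts (\ref{Cn_6918_General_dnkn})--(\ref{Cn_6918_General_kpom}) hold, and additionally $\omega < \tfrac{1}{2} - \tfrac{1}{4\kappa}$, equivalently $\frac{2\kappa-1}{2\omega} > \frac{1}{1-2\omega}$. Taking, for example, $d(1)$ large, $d(n)$ enormous for $n \geq 2$, and $k(n) = 1$ makes $\kappa$ close to $1$ and $\omega, \omega'$ as small as one wishes, so every condition is easy to arrange. Carry out the full construction of $C$, $C^{(0)}$, $D$, $D^{(0)}$, using Proposition~\ref{P_arbitrary_trace_space} to choose the AI maps $R_{n,j}$ in Construction~\ref{Cn_6918_General_Part_1a}(\ref{Cn_6918_General_1a_Maps_YBottom}) so that the direct limit $B$ of the bottom row of~(\ref{Eq_9Y01_PrelimDiagram}) (under the non-crossing maps $\Gm^{(0)}$) satisfies $\T(B) \cong \T(A)$, where $A$ is the corresponding top-row limit; this is possible because $\T(A)$ is a metrizable Choquet simplex (inverse limit of probability measure simplices). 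Then $C^{(0)} \cong A \oplus B$, and $C$ is simple by Lemma~\ref{L_0725_Simplicity}, unital, separable, nuclear, AH by construction, and of stable rank one by the remark after Lemma~\ref{lemma-stable-rank-1}.

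The core step is building $F = (F_0, F_1, S) \in \Aut(\Ell(C))$ with $F \circ F = \id$ and $F_0([q]) = [q^\perp]$. Take $F_0 = \mu_* \circ \theta_* \circ \mu_*^{-1}$, which is well-defined and involutive by Lemma~\ref{L_6921_Conseq}(\ref{L_6921_Conseq_Kth}) and~(\ref{L_6921_Conseq_Flip}) and swaps $[q]$ and $[q^\perp]$ because $\theta_0$ exchanges the two coordinates of $D_0 = \C \oplus \C$; take $F_1 = \id$ (noting $K_1(C) = 0$ from contractibility of the $X_n, Y_n$). For $S$, let $S^{(0)} \colon \T(C^{(0)}) \to \T(C^{(0)})$ be the involutive affine homeomorphism swapping the two summands of $C^{(0)} = A \oplus B$ via the identification $\psi \colon \T(A) \to \T(B)$, and apply Proposition~\ref{P_6917_CStLim} to the two systems $(\Gm_{n,m})$ and $(\Gm^{(0)}_{n,m})$ on $(C_n)$; its hypotheses hold by Lemma~\ref{L_6919_IfAgreeTodn} combined with the summability in Construction~\ref{Cn_6918_General}(\ref{Cn_6918_General_rrp}). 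This gives an affine homeomorphism $\T(C) \cong \T(C^{(0)})$, through which we conjugate $S^{(0)}$ to obtain $S$. To verify the compatibility~(\ref{Eq_6922_Compat}) between $F_0$ and $S$, run the same intertwining argument on $(\phi_{n,m}), (\phi^{(0)}_{n,m})$ in $(D_n)$ to obtain $\sigma_D \colon \Aff(\T(D)) \to \Aff(\T(D^{(0)}))$, use Proposition~\ref{P_6918_Nat} applied to $\mu_n$ to get $\sigma \circ \widehat\mu = \widehat{\mu^{(0)}} \circ \sigma_D$, use the commutativity of $\theta_n$ with both $\phi_{n+1,n}$ and $\phi^{(0)}_{n+1,n}$ together with the limit formula of Proposition~\ref{P_6917_OrdULim} to get $\sigma_D \circ \widehat\theta = \widehat{\theta^{(0)}} \circ \sigma_D$, and note that $\widehat{S^{(0)}} \circ \widehat{\mu^{(0)}} = \widehat{\mu^{(0)}} \circ \widehat{\theta^{(0)}}$ by a direct computation on $\Aff(\T(D^{(0)})) \cong \R^2$ (where both sides implement the standard swap of the two scalar components). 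Composing these identities with $\rho_C \circ \mu_* = \widehat\mu \circ \rho_D$ yields $\widehat{S} \circ \rho_C = \rho_C \circ F_0$, which is precisely~(\ref{Eq_6922_Compat}).

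With $F$ in hand, non-liftability is immediate. If $\alpha \in \Aut(C)$ satisfied $\alpha_* = F$, then $[\alpha(q)] = F_0([q]) = [q^\perp]$, so by Lemma~\ref{lemma-stable-rank-1} there is a unitary $u \in C$ with $\alpha(q) = u q^\perp u^*$, and therefore $qCq \cong \alpha(qCq) = \alpha(q) C \alpha(q) \cong q^\perp C q^\perp$. This contradicts
\[
\rc(qCq) \leq \frac{1}{1-2\omega} < \frac{2\kappa-1}{2\omega} \leq \rc(q^\perp C q^\perp),
\]
which holds by Lemmas~\ref{Lemma:rc-upper-bound} and~\ref{Lemma:rc_lower_bound} given the choice of parameters. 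The main obstacle throughout the proof is the construction of $S$ together with the verification of~(\ref{Eq_6922_Compat}); once this is done, the remaining steps are either standard bookkeeping or direct invocations of results proved in Sections~\ref{Sec_Bounds} and~\ref{Sec_Traces}.
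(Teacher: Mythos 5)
Your overall strategy is the same as the paper's: build the crossing system $C$ and the non-crossing system $C^{(0)}\cong A\oplus B$, transport the flip automorphism $\te$ of the AF shadow $D$ through the approximate intertwinings of Proposition~\ref{P_6917_CStLim} and Lemma~\ref{L_6919_IfAgreeTodn} to get an order-two automorphism of $\Ell(C)$ exchanging $[q]$ and $[q^{\perp}]$, and then rule out a lifting via Lemma~\ref{lemma-stable-rank-1} and the radius-of-comparison gap from Lemmas~\ref{Lemma:rc_lower_bound} and~\ref{Lemma:rc-upper-bound}. One small place where you genuinely improve on the write-up: you obtain the identity $\sm_D\circ{\widehat{\te}}={\widehat{\te^{(0)}}}\circ\sm_D$ directly from the naturality statement (Proposition~\ref{P_6918_Nat}) applied with $\mu_n=\mu_n'=\te_n$, which is the ``right way'' the paper only alludes to; the paper instead argues via the uniqueness of the nontrivial automorphism of $\R^2$ as an order unit space. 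That part of your argument is fine.

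There is, however, a genuine gap at the step where you arrange $\T(B)\cong\T(A)$. You write that one can ``carry out the full construction \ldots{} using Proposition~\ref{P_arbitrary_trace_space} to choose the AI maps $R_{n,j}$'' with the prescribed multiplicities at every level, ``because $\T(A)$ is a metrizable Choquet simplex.'' But Proposition~\ref{P_arbitrary_trace_space} does not realize a given simplex by an AI system with the full prescribed sequence of matrix sizes $r(0),r(1),r(2),\ldots$; it only produces such a system along a subsequence $r(n_0),r(n_1),r(n_2),\ldots$ (with $n_0=0$, $n_1=1$), since the proof needs $r(n_{k+1})/r(n_k)$ large compared with the data coming from Lemma~\ref{L_7630_ConvComb}. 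Since the crossing maps $\gm_n$ require the bottom row to have exactly the sizes $r(n)$ and multiplicities $d(n+1)$, $k(n+1)$ matching the top row at every stage, you must first fix the top-row system by itself (so that its trace simplex is defined before the bottom row is built, avoiding the circularity in your formulation), apply Proposition~\ref{P_arbitrary_trace_space} to that simplex, and then replace the top row by the corresponding blocked subsystem. This blocking changes $d(n)$ and $k(n)$ drastically (the new $k(m)$ is $\prod\widetilde{l}-\prod\widetilde{d}$, not $1$), so one must verify that $\kp$ and $\om$ are preserved, that $\om'$ does not increase, and that conditions such as $k(n)<d(n)$ and the key inequality $\frac{1}{1-2\om}<\frac{2\kp-1}{2\om}$ survive; this is exactly the content of Construction~\ref{Cn_9104_Half}, Lemma~\ref{L_0722_Comb}, and Lemma~\ref{L_9104_Subsystem} in the paper, and it is not automatic from ``choose $d(n)$ enormous and $k(n)=1$,'' because those are the parameters before blocking. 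Your proposal omits this entire step, and as written the invocation of Proposition~\ref{P_arbitrary_trace_space} does not deliver the maps $R_{n,j}$ you need. The gap is repairable (by precisely the subsystem argument just described), but it is the main structural point of the paper's proof and must be supplied.
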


We outline the proof.
We make a first pass through
Construction~\ref{Cn_6918_General}
and Construction~\ref{Cn_6918_General_Part_1a},
without the spaces~$Y_n$,
and without specifying the point evaluation maps.
This is Construction~\ref{Cn_9104_Half} below.
We get a direct system;
call its direct limit~${\widetilde{C}}$.
Apply Proposition~\ref{P_arbitrary_trace_space}
using the sequence of matrix sizes in this system and
$K = \T \bigl( {\widetilde{C}} \bigr)$.
Doing so requires passing to a subsequence of the sequence of
matrix sizes.
Replace the original system with the corresponding subsystem;
Lemma \ref{L_9104_Subsystem} below justifies this.
Then make a second pass through
Construction~\ref{Cn_6918_General}
and Construction~\ref{Cn_6918_General_Part_1a},
taking the spaces $X_n$ and the maps between them from this subsystem
and the spaces $Y_n$ and the maps between them from the
system gotten from Proposition~\ref{P_arbitrary_trace_space},
as needed substituting appropriate point evaluations for the diagonal
entries of the formulas for the maps.
This requires sufficiently few changes that,
by our work in Section~\ref{Sec_Traces},
the tracial state space remains the same.
Therefore the algebra obtained from these
constructions has an order two automorphism of its tracial state space
which corresponds to exchanging the two rows
in the diagram~(\ref{Eq_9Y01_Diagram}).
The constructions have been designed so that there is also
a corresponding automorphism of the K-theory.
Our work in Section~\ref{Sec_Bounds}
rules out the possibility of
a corresponding automorphism of the algebra,
because such an automorphism would necessarily
send a particular corner of the algebra to another one
with a different radius of comparison.

We start with the following construction,
which is ``half'' of Construction~\ref{Cn_6918_General},
and gives just the top row of the
diagram~(\ref{Eq_9Y01_PrelimDiagram}).

\begin{Construction}\label{Cn_9104_Half}
We will consider direct systems and
their associated direct limits constructed as follows.
\begin{enumerate}
\item\label{Cn_9104_Half_dn}
The sequences $(d (n) )_{n = 0, 1, 2, \ldots}$
and $(k (n) )_{n = 0, 1, 2, \ldots}$ in~$\Nz$
are as in
Construction \ref{Cn_6918_General}(\ref{Cn_6918_General_dn})
and satisfy the condition of
Construction \ref{Cn_6918_General}(\ref{Cn_6918_General_dnkn}).
We further define $(l (n) )_{n = 0, 1, 2, \ldots}$
$(r (n) )_{n = 0, 1, 2, \ldots}$,
and $(r (n) )_{n = 0, 1, 2, \ldots}$
as in Construction \ref{Cn_6918_General}(\ref{Cn_6918_General_dn}).
\item\label{Cn_9104_Half_kappa}
Following
Construction \ref{Cn_6918_General}(\ref{Cn_6918_General_Size})
and Construction \ref{Cn_6918_General}(\ref{Cn_6918_General_rrp}),
we define
\[
\kp = \inf_{n \in \N} \frac{s (n)}{r (n)} \, ,
\qquad
\om = \frac{k (1)}{k (1) + d (1)} \, ,
\andeqn
\om' = \sum_{n = 2}^{\infty} \frac{k (n)}{k (n) + d (n)} \, .
\]
(These will not be used directly in connection with
this direct system.)
\item\label{Cn_9104_Half_Spaces}
As in
Construction \ref{Cn_6918_General_Part_1a}(\ref{Cn_6918_General_1a_Xn}),
we define compact metric spaces
by $X_n = \cone \bigl( (S^2)^{s (n)} \bigr)$
for $n \in \Nz$,
and we define maps $Q^{(n)}_j \colon X_{n + 1} \to X_{n}$
for $n \in \Nz$ and $j = 1, 2, \ldots, d (n + 1)$
to be the cones over the projection maps
\[
(S^2)^{s (n + 1)} = \bigl( (S^2)^{s (n)} \bigr)^{d (n + 1)}
  \to (S^2)^{s (n)}.
\]
\item\label{Cn_9104_Half_Maps}
We are given maps
$\dt_n \colon C (X_n) \to C (X_{n + 1}, \, M_{l (n + 1)} )$
(as in
Construction \ref{Cn_6918_General}(\ref{Cn_6918_General_Maps}),
but with only one summand)
which are diagonal,
that is,
there are continuous maps
\[
T_{n, 1}, \, T_{n, 2}, \, \ldots, \,
      T_{n, l (n + 1)}
  \colon X_{n + 1} \to X_n
\]
such that
\[
\dt_n (f)
 = \diag \bigl( f \circ T_{n, 1},
     \, f \circ T_{n, 2}, \, \ldots, \,
      f \circ T_{n, l (n + 1)} \bigr)
\]
for $f \in C (X_n)$.
(Compare with
Construction \ref{Cn_6918_General}(\ref{Cn_6918_General_Diag}).)
Moreover, $T_{n, j} = Q^{(n)}_j$
for $j = 1, 2, \ldots, d (n + 1)$.
The maps $T_{n, j}$ are unspecified
for $j = d (n + 1) + 1, \, d (n + 1) + 2, \, \ldots, \, l (n + 1)$.
\item\label{Cn_9104_Half_System}
Set
$A_n = M_{r (n)} \otimes C ( X_n )$
(like in
Construction \ref{Cn_6918_General}(\ref{Cn_6918_General_Algs})
but with only one summand).
Following
Construction \ref{Cn_6918_General}(\ref{Cn_6918_General_Maps}),
set
\[
\Dt_{n + 1, \, n} = \id_{M_{r (n)}} \otimes \dt_n
  \colon A_n \to A_{n + 1} \, ,
\]
and for $m, n \in \Nz$ with $m \leq n$,
we take
\[
\Delta_{n, m}
= \Delta_{n, n - 1} \circ \Delta_{n - 1, \, n - 2} \circ \cdots
\circ \Delta_{m + 1, m}
\colon A_m \to A_n \, .
\]
\item\label{Cn_9104_Half_Lim}
Define $A = \dirlim_n A_n$,
taken with respect to the maps $\Delta_{n, m}$.
For $n \in \Nz$,
let $\Dt_{\I, n} \colon A_n \to A$
be the map associated with the direct limit.
\end{enumerate}
\end{Construction}

To avoid confusing notation,
we isolate the following computation as a lemma.

\begin{Lemma}\label{L_0722_Comb}
Let $n \in \N$ and let
$\kp_1, \kp_2, \ldots, \kp_n, \dt_1, \dt_2, \ldots, \dt_n \in (0, \I)$.
Then
\[
\sum_{j = 1}^n \frac{\kp_j}{\dt_j + \kp_j}
 \geq \frac{\prod_{j = 1}^n (\dt_j + \kp_j) - \prod_{j = 1}^n \dt_j}{
           \prod_{j = 1}^n (\dt_j + \kp_j)}.
\]
\end{Lemma}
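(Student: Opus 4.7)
The plan is to reduce the inequality to a clean inductive statement by a substitution, and then dispose of it with a one-line telescoping identity.

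First I would set $a_j = \delta_j / (\delta_j + \kappa_j) \in (0,1)$ for $j = 1, 2, \ldots, n$, so that $\kappa_j / (\delta_j + \kappa_j) = 1 - a_j$. Dividing numerator and denominator on the right by $\prod_{j=1}^n (\delta_j + \kappa_j)$, the claimed inequality becomes the equivalent statement
\[
\sum_{j=1}^n (1 - a_j) \geq 1 - \prod_{j=1}^n a_j,
\]
for any $a_1, a_2, \ldots, a_n \in (0,1)$.

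Next I would prove this equivalent inequality by induction on~$n$. The base case $n = 1$ is an equality. For the inductive step, write
\[
1 - \prod_{j=1}^{n+1} a_j
 = (1 - a_{n+1}) + a_{n+1}\Bigl( 1 - \prod_{j=1}^n a_j \Bigr)
 \leq (1 - a_{n+1}) + \Bigl( 1 - \prod_{j=1}^n a_j \Bigr),
\]
using $a_{n+1} \leq 1$, and then apply the inductive hypothesis to the second term on the right to obtain $\sum_{j=1}^{n+1}(1-a_j)$. There is no obstacle: the substitution strips away the weight factors and the induction is one line. (As a sanity check, one can also see the inequality via the telescoping identity
\[
\prod_{j=1}^n (\delta_j + \kappa_j) - \prod_{j=1}^n \delta_j
 = \sum_{k=1}^n \kappa_k \prod_{j<k}(\delta_j + \kappa_j) \prod_{j>k}\delta_j,
\]
obtained by replacing $\delta_j$ by $\delta_j + \kappa_j$ one factor at a time, and then bounding $\prod_{j>k}\delta_j \leq \prod_{j>k}(\delta_j + \kappa_j)$ termwise after dividing by $\prod_{j=1}^n (\delta_j + \kappa_j)$.)
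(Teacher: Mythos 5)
Your proposal is correct and follows essentially the same route as the paper: substituting away the denominators to reduce the claim to $\sum_j (1-a_j) \geq 1 - \prod_j a_j$ (the paper writes this with $\lambda_j = 1 - a_j$) and then running a short induction on $n$. The only difference is cosmetic — you peel off the last factor via $1 - \prod_{j=1}^{n+1} a_j = (1-a_{n+1}) + a_{n+1}\bigl(1 - \prod_{j=1}^{n} a_j\bigr)$, while the paper merges the last two terms into a single parameter before invoking the inductive hypothesis.
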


\begin{proof}
For $j = 1, 2, \ldots, n$ define
\[
\ld_j = \frac{\kp_j}{\dt_j + \kp_j}.
\]
Then $\ld_j \in (0, 1)$.
Some calculation shows that the conclusion of the lemma becomes
\begin{equation}\label{Eq_0722_Lambda}
\sum_{j = 1}^n \ld_j
 \geq 1 - \prod_{j = 1}^n (1 - \ld_j).
\end{equation}

We prove~(\ref{Eq_0722_Lambda}) by induction on~$n$.
For $n = 1$ it is trivial.
Suppose~(\ref{Eq_0722_Lambda}) is known for some value of~$n$.
Given $\ld_1, \ld_2, \ldots, \ld_{n + 1} \in (0, 1)$,
set $\mu = 1 - (1 - \ld_n) (1 - \ld_{n + 1} )$.
Then
\[
\mu \in (0, 1)
\andeqn
\mu = \ld_n + \ld_{n + 1} - \ld_n \ld_{n + 1}
 \leq \ld_n + \ld_{n + 1}.
\]
Applying the induction hypothesis on
$\ld_1, \ld_2, \ldots, \ld_{n - 1}, \mu$
at the second step,
we then have
\[
\sum_{j = 1}^{n + 1} \ld_j
 \geq \sum_{j = 1}^{n - 1} \ld_j + \mu
 \geq 1 - \Biggl[ \prod_{j = 1}^{n - 1} (1 - \ld_j) \Biggr] (1 - \mu)
 = 1 - \prod_{j = 1}^{n + 1} (1 - \ld_j).
\]
This completes the induction, and the proof of the lemma.
\end{proof}

\begin{Lemma}\label{L_9104_Subsystem}
Let a direct system as in Construction~\ref{Cn_9104_Half} be given,
but using sequences
$\bigl( {\widetilde{d}} (n) \bigr)_{n = 0, 1, 2, \ldots}$
and $\bigr( {\widetilde{k}} (n) \bigr)_{n = 0, 1, 2, \ldots}$
in place of $(d (n) )_{n = 0, 1, 2, \ldots}$
and $(k (n) )_{n = 0, 1, 2, \ldots}$.
Denote the additional
sequences analogous to those in
Construction \ref{Cn_9104_Half}(\ref{Cn_9104_Half_dn})
by ${\widetilde{l}}$, ${\widetilde{r}}$, and~${\widetilde{s}}$.
Denote the numbers analogous
to those in
Construction \ref{Cn_9104_Half}(\ref{Cn_9104_Half_kappa})
by ${\widetilde{\kp}}$, ${\widetilde{\om}}$, and~${\widetilde{\om}}'$.
Denote the spaces used in the system by ${\widetilde{X}}_{n}$.
Let $\nu \colon \Nz \to \Nz$
be a strictly increasing function
such that $\nu (0) = 0$ and $\nu (1) = 1$.
Then the direct system
$\bigl( C \bigl( {\widetilde{X}}_{\nu (m)},
 \, M_{{\widetilde{r}} (\nu (m))} \bigr) \bigr)_{m = 0, 1, 2, \ldots}$
is isomorphic to a system as in
Construction \ref{Cn_9104_Half}, with the choices
$d (0) = 1$, $k (0) = 0$,
\begin{equation}\label{Eq_0722_dm}
d (m) = {\widetilde{d}} \bigl( \nu (m - 1) + 1 \bigr)
           {\widetilde{d}} \bigl( \nu (m - 1) + 2 \bigr)
           \cdots {\widetilde{d}} \bigl( \nu (m) \bigr)
\end{equation}
and
\begin{equation}\label{Eq_0722_km_Form}
k (m) = {\widetilde{l}} \bigl( \nu (m - 1) + 1 \bigr)
           {\widetilde{l}} \bigl( \nu (m - 1) + 2 \bigr)
           \cdots {\widetilde{l}} \bigl( \nu (m) \bigr)
        - d (m)
\end{equation}
for $m \in \N$.
Moreover,
following the notation of Construction \ref{Cn_9104_Half},
\begin{equation}\label{Eq_0723_lm_is_prod}
l (m) = {\widetilde{l}} \bigl( \nu (m - 1) + 1 \bigr)
           {\widetilde{l}} \bigl( \nu (m - 1) + 2 \bigr)
           \cdots {\widetilde{l}} \bigl( \nu (m) \bigr),
\end{equation}
\[
r (m) = {\widetilde{r}} \bigl( \nu (m) \bigr),
\andeqn
s (m) = {\widetilde{s}} \bigl( \nu (m) \bigr)
\]
for $m \in \Nz$,
and
\[
\kp = {\widetilde{\kp}},
\qquad
\om = {\widetilde{\om}},
\andeqn
\om' \leq {\widetilde{\om}}'.
\]
\end{Lemma}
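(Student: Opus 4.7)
The plan is to verify in order: first, the telescoping formulas for $l(m)$, $r(m)$, $s(m)$ and the identification of the subsystem with a Construction \ref{Cn_9104_Half} system of the stated form; then $\kappa = \widetilde\kappa$ and $\om = \widetilde\om$; and finally the inequality $\om' \leq \widetilde\om'$, which is the one genuinely nontrivial step and where Lemma \ref{L_0722_Comb} enters.

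For the telescoping part, formulas (\ref{Eq_0722_dm}) and (\ref{Eq_0722_km_Form}) give
\[
l(m) = d(m) + k(m)
 = \prod_{j = \nu(m - 1) + 1}^{\nu(m)} \widetilde{l}(j),
\]
which is (\ref{Eq_0723_lm_is_prod}); from this $r(m) = \prod_{i = 0}^m l(i) = \widetilde{r}(\nu(m))$ and $s(m) = \prod_{i = 0}^m d(i) = \widetilde{s}(\nu(m))$ follow by one-line telescoping. The space at level $m$ is then $\widetilde{X}_{\nu(m)} = \cone((S^2)^{\widetilde{s}(\nu(m))}) = \cone((S^2)^{s(m)})$, matching Construction \ref{Cn_9104_Half}(\ref{Cn_9104_Half_Spaces}). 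The composed map $\widetilde\Delta_{\nu(m), \nu(m - 1)}$ is a composition of $\nu(m) - \nu(m - 1)$ diagonal maps, hence itself diagonal, with $\prod_j \widetilde{l}(j) = l(m)$ summands; choosing one of the first $\widetilde{d}(j)$ indices at each intermediate step singles out $d(m)$ summands that are exactly the iterated coordinate projections of $(S^2)^{s(m)} = ((S^2)^{s(m - 1)})^{d(m)}$ onto $(S^2)^{s(m - 1)}$, so after taking cones these are the required maps $Q^{(m - 1)}_j$ for the new system.

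The equality $\kappa = \widetilde\kappa$ is immediate because the sequence $(\widetilde{s}(n)/\widetilde{r}(n))_{n \geq 1}$ is strictly decreasing by Lemma \ref{L_6922_Noninc}, so its infimum over any cofinal subsequence agrees with its infimum over the whole sequence. The equality $\om = \widetilde\om$ comes from the normalization $\nu(1) = 1$, which gives $d(1) = \widetilde{d}(1)$, $l(1) = \widetilde{l}(1)$, and therefore $k(1) = \widetilde{k}(1)$.

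The main step, and the one I expect to require actual work, is $\om' \leq \widetilde\om'$. For each fixed $m \geq 2$, I would apply Lemma \ref{L_0722_Comb} with $\delta_j = \widetilde{d}(j)$ and $\kp_j = \widetilde{k}(j)$ for $j = \nu(m - 1) + 1, \ldots, \nu(m)$; identifying the products with $d(m)$ and $l(m)$ via (\ref{Eq_0722_dm}) and (\ref{Eq_0723_lm_is_prod}) yields
\[
\sum_{j = \nu(m - 1) + 1}^{\nu(m)}
  \frac{\widetilde{k}(j)}{\widetilde{k}(j) + \widetilde{d}(j)}
 \geq \frac{l(m) - d(m)}{l(m)}
 = \frac{k(m)}{k(m) + d(m)}.
\]
Since $\nu$ is strictly increasing with $\nu(1) = 1$, the blocks $\{\nu(m - 1) + 1, \ldots, \nu(m)\}$ for $m \geq 2$ are disjoint and partition $\{2, 3, 4, \ldots\}$, so summing over $m \geq 2$ gives $\om' \leq \widetilde\om'$. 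The only obstacle is keeping the two levels of indexing straight; the one-sided nature of Lemma \ref{L_0722_Comb} is precisely what produces an inequality (rather than an equality) at this step, which is why the conclusion for $\om'$ is stated as a bound.
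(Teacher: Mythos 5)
Your proposal is correct and follows essentially the same route as the paper: telescoping for $l$, $r$, $s$, monotonicity (Lemma~\ref{L_6922_Noninc}) for $\kp = {\widetilde{\kp}}$, the normalization $\nu(1) = 1$ for $\om = {\widetilde{\om}}$, and a blockwise application of Lemma~\ref{L_0722_Comb} for $\om' \leq {\widetilde{\om}}'$. The only detail you leave implicit, which the paper states explicitly, is that the diagonal entries of the composed map coming from first-block indices are scattered along the diagonal, so one conjugates by a permutation unitary to put the iterated coordinate projections into the first $d(m)$ slots and thereby obtain a system literally of the form in Construction~\ref{Cn_9104_Half}.
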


\begin{proof}
Given the definitions of $d$ and~$k$,
the proofs of the formulas for $l$, $r$, and~$s$
are easy.

Using Lemma~\ref{L_6922_Noninc} at the first and fourth steps,
we now get
\[
{\widetilde{\kp}}
    = \lim_{n \to \infty}
         \frac{{\widetilde{s}} (n)}{{\widetilde{r}} (n)}
    = \lim_{m \to \infty}
         \frac{{\widetilde{r}} \bigl( \nu (m) \bigr)}{
                       {\widetilde{s}} \bigl( \nu (m) \bigr)}
    = \lim_{m \to \infty} \frac{s (m)}{r (m)}
    = \kp.
\]
We have $\om = {\widetilde{\om}}$ because $\nu (1) = 1$.

Using Lemma~\ref{L_0722_Comb} at the second step
and (\ref{Eq_0722_dm}), (\ref{Eq_0722_km_Form})
and~(\ref{Eq_0723_lm_is_prod})
at the third step, we have
\begin{align*}
{\widetilde{\om}}'
& = \sum_{m = 2}^{\infty} \sum_{j = \nu (m - 1) + 1}^{\nu (m)}
      \frac{{\widetilde{k}} (j)}{{\widetilde{k}} (j)
                                   + {\widetilde{d}} (j)}
\\
& \geq \sum_{m = 2}^{\infty}
        \frac{\prod_{j = \nu (m - 1) + 1}^{\nu (m)}
                   [{\widetilde{d}} (j) + {\widetilde{k}} (j)]
               - \prod_{j = \nu (m - 1) + 1}^{\nu (m)}
                   {\widetilde{d}} (j)}{
           \prod_{j = \nu (m - 1) + 1}^{\nu (m)}
              [{\widetilde{d}} (j) + {\widetilde{k}} (j)]}
\\
& = \sum_{m = 2}^{\infty} \frac{k (m)}{k (m) + d (m)}
  = \om'.
\end{align*}

Define $X_m = {\widetilde{X}}_{\nu (m)}$ for $m \in \Nz$.
Clearly $X_m = \cone \bigl( (S^2)^{s (m)} \bigr)$,
as required.
Denote the maps in the system of the hypotheses by
\[
{\widetilde{\dt}}_n
 \colon C \bigl( {\widetilde{X}}_{n} \bigr)
  \to C ({\widetilde{X}}_{n + 1}, \, M_{{\widetilde{l}} (n + 1)} \bigr)
\andeqn
{\widetilde{\Dt}}_{n, m}
 \colon {\widetilde{C}}_{m} \to {\widetilde{C}}_{n},
\]
with ${\widetilde{\dt}}_n$ being built using maps
\[
{\widetilde{T}}_{n, 1}, \, {\widetilde{T}}_{n, 2}, \, \ldots, \,
      {\widetilde{T}}_{n, \, l (n + 1)}
  \colon {\widetilde{X}}_{n + 1} \to {\widetilde{X}}_{n},
\]
as in Construction \ref{Cn_9104_Half}(\ref{Cn_9104_Half_Maps}).
For $p = \nu (m), \, \nu (m) + 1, \, \ldots, \, \nu (m + 1) - 1$,
set
\[
j (p)
 = \frac{{\widetilde{r}} (p)}{{\widetilde{r}} (\nu (m))}
 = {\widetilde{l}} \bigl( \nu (m) + 1 \bigr)
           {\widetilde{l}} \bigl( \nu (m) + 2 \bigr)
           \cdots {\widetilde{l}} (p).
\]
Then define
\[
\dt^{(0)}_m
 \colon C \bigl( {\widetilde{X}}_{\nu (m)} \bigr)
  \to C ({\widetilde{X}}_{\nu (m + 1)}, \, M_{l (n + 1)} \bigr)
\]
by
\[
\dt^{(0)}_m
 = \id_{M_{j (\nu (m + 1) - 1)}}
            \otimes {\widetilde{\dt}}_{\nu (m + 1) - 1}
    \circ \id_{M_{j (\nu (m + 1) - 2)}}
            \otimes {\widetilde{\dt}}_{\nu (m + 1) - 2}
    \circ \cdots
    \circ {\widetilde{\dt}}_{\nu (m)}.
\]
(In the last term we omit $\id_{M_{j (\nu (m))}}$
since $j (\nu (m)) = 1$.)
With this definition,
one checks that
$\id_{M_{{\widetilde{r}} (\nu (m))}} \otimes {\widetilde{\dt}}_m
 = {\widetilde{\Dt}}_{ \nu (m + 1), \, \nu (m) }$,
so that the direct system gotten using the maps $\dt^{(0)}_m$
in Construction~\ref{Cn_9104_Half}
is a subsystem of the system given in the hypotheses.

We claim that $\dt^{(0)}_m$ is unitarily equivalent
to a map
$\dt_m \colon C ( X_{m} ) \to C (X_{m + 1}, \, M_{l (n + 1)} \bigr)$
as in Construction~\ref{Cn_9104_Half}.
This will imply isomorphism of the direct systems,
and complete the proof of the lemma.
First,
$\dt^{(0)}_m$ is given as in
Construction \ref{Cn_9104_Half}(\ref{Cn_9104_Half_Maps})
using some maps
from ${\widetilde{X}}_{\nu (m + 1)}$ to ${\widetilde{X}}_{\nu (m)}$,
namely all possible compositions
\[
{\widetilde{T}}_{\nu (m), \, i_{\nu (m)}}
 \circ {\widetilde{T}}_{\nu (m) + 1, \, i_{\nu (m) + 1}} \circ \cdots
 \circ {\widetilde{T}}_{\nu (m + 1) - 1, \, i_{\nu (m + 1) - 1}}
\]
with $i_p = 1, 2, \ldots, {\widetilde{l}} (p + 1)$
for $p = \nu (m), \, \nu (m) + 1, \, \ldots, \nu (m + 1) - 1$.
Moreover,
since the composition of projection maps is a projection map,
restricting to $i_p = 1, 2, \ldots, {\widetilde{d}} (p + 1)$
for all~$p$
gives exactly all the maps
$Q^{(m)}_j \colon X_{m + 1} \to X_m$
for $j = 1, 2, \ldots, d (n + 1)$.
Therefore $\dt^{(0)}_m$
is unitarily equivalent to a map
as in Construction~\ref{Cn_9104_Half} by a permutation matrix.
\end{proof}

\begin{proof}[Proof of Theorem~\ref{T_6922_FlipOfEll}]
Choose $N \in \N$ such that
\begin{equation}\label{Eq_0722_ChooseN}
N > 5
\andeqn
\exp \left( - \frac{1}{N - 1} \right) > \frac{3}{4}.
\end{equation}
(For example, $N = 6$ will work.)
We make preliminary choices of the numbers $d (n)$ etc.\  in
Construction \ref{Cn_6918_General}(\ref{Cn_6918_General_dn}),
calling them ${\widetilde{d}} (n)$ etc.
Take ${\widetilde{d}} (0) = 1$ and ${\widetilde{k}} (0) = 0$,
and take ${\widetilde{d}} (n) = N^n$
and ${\widetilde{k}} (n) = 1$ for $n \in \N$.
Then
\[
{\widetilde{l}} (n) = N^n + 1,
\qquad
{\widetilde{r}} (n) = \prod_{j = 1}^n (N^j + 1),
\andeqn
{\widetilde{s}} (n) = \prod_{j = 1}^n N^j
\]
for $n \in \N$.
We obtain numbers
as in Construction \ref{Cn_9104_Half}(\ref{Cn_9104_Half_kappa})
(equivalently,
Construction \ref{Cn_6918_General}(\ref{Cn_6918_General_Size})
and Construction \ref{Cn_6918_General}(\ref{Cn_6918_General_rrp})),
which we call ${\widetilde{\kp}}$, ${\widetilde{\om}}$,
and ${\widetilde{\om}}'$.
Further,
adopt the definitions and notation of
Construction \ref{Cn_9104_Half},
except that we use ${\widetilde{X}}_n$ instead of~$X_n$
and similarly throughout.
That is,
in Construction \ref{Cn_9104_Half}(\ref{Cn_9104_Half_Spaces})
we call the spaces ${\widetilde{X}}_n$ instead of~$X_n$,
the projection maps ${\widetilde{Q}}_j^{(n)}$,
in Construction \ref{Cn_9104_Half}(\ref{Cn_9104_Half_Maps})
we call
the maps of algebras ${\widetilde{\dt}}_n$
and the maps of spaces
${\widetilde{T}}_{n, j} \colon
 {\widetilde{X}}_{n + 1} \to {\widetilde{X}}_{n}$,
in Construction \ref{Cn_9104_Half}(\ref{Cn_9104_Half_System})
we call the algebras ${\widetilde{A}}_n$
and the maps ${\widetilde{\Dt}}_{n, m}$,
and
in Construction \ref{Cn_9104_Half}(\ref{Cn_9104_Half_Lim})
we call the direct limit~${\widetilde{A}}$
and the maps to it ${\widetilde{\Dt}}_{\I, n}$.
As in Construction \ref{Cn_9104_Half}(\ref{Cn_9104_Half_Maps}),
we take ${\widetilde{T}}_{n, j} = {\widetilde{Q}}^{(n)}_j$
for $j = 1, 2, \ldots, {\widetilde{d}} (n + 1)$.
For $n \in \Nz$ choose an arbitrary point
${\widetilde{x}}_{n} \in {\widetilde{X}}_n$,
and for
$j = {\widetilde{d}} (n + 1) + 1$
let ${\widetilde{T}}_{n, j}$ be
the constant function on ${\widetilde{X}}_{n + 1}$
with value ${\widetilde{x}}_{n}$.
(Note that ${\widetilde{d}} (n + 1) + 1 = {\widetilde{l}} (n + 1)$.)

We claim that
the conditions in
Construction \ref{Cn_6918_General}(\ref{Cn_6918_General_Size}),
Construction \ref{Cn_6918_General}(\ref{Cn_6918_General_rrp}),
and Construction \ref{Cn_6918_General}(\ref{Cn_6918_General_kpom})
are satisfied,
and moreover that
\[
\frac{1}{1 - 2 {\widetilde{\om}}}
 < \frac{2 {\widetilde{\kappa}} - 1}{2 {\widetilde{\om}}} \, .
\]
For $n \in \N$ we have,
using $\log (m + 1) - \log (m) < \frac{1}{m}$ at the third step,
\begin{align*}
\frac{{\widetilde{s}} (n)}{{\widetilde{r}} (n)}
& = \prod_{j = 1}^n \frac{N^j}{N^j + 1}
  = \exp \Biggl( \sum_{j = 1}^n
    - \big[ \log (N^j + 1) - \log (N^j) \big] \Biggr)
\\
& \geq \exp \Biggl( - \sum_{j = 1}^n \frac{1}{N^j} \Biggr)
  > \exp \left( - \frac{1}{N - 1} \right) \, .
\end{align*}
So
${\widetilde{\kp}}
 \geq \exp \left( - \frac{1}{N - 1} \right)
 > \frac{3}{4}$
by~(\ref{Eq_0722_ChooseN}).
Moreover,
\[
{\widetilde{\om}} = \frac{1}{N + 1} < \frac{1}{4}
\andeqn
{\widetilde{\om}}' = \sum_{j = 2}^{\infty} \frac{1}{N^j + 1}
       < \sum_{j = 2}^{\infty} \frac{1}{N^j}
       = \frac{1}{N (N - 1)} \, ,
\]
so the conditions ${\widetilde{\om}}' < {\widetilde{\om}} < \frac{1}{2}$
in Construction \ref{Cn_6918_General}(\ref{Cn_6918_General_rrp})
and $2 {\widetilde{\kappa}} - 1 > 2 {\widetilde{\om}}$
in Construction \ref{Cn_6918_General}(\ref{Cn_6918_General_kpom})
are satisfied.
Moreover,
\[
\frac{1}{1 - 2 {\widetilde{\om}}}
 = \frac{N + 1}{N - 1}
 < \frac{N + 1}{4}
 = \frac{1}{4 {\widetilde{\om}}}
 = \frac{2 \left( \frac{3}{4} \right) - 1}{2 {\widetilde{\om}}}
 < \frac{2 {\widetilde{\kappa}} - 1}{2 {\widetilde{\om}}} \, .
\]
The claim is proved.

Apply Proposition~\ref{P_arbitrary_trace_space}
with $K = \T \bigl( {\widetilde{A}} \bigr)$
and with ${\widetilde{l}} (n)$ and ${\widetilde{r}} (n)$
in place of $l (n)$ and $r (n)$,
getting a strictly increasing sequence,
which we call $(\nu (n))_{n = 0, 1, 2, \ldots}$,
with $\nu (j) = j$ for $j = 0, 1$,
an AI~algebra $B_0$
(called $A$ in Proposition~\ref{P_arbitrary_trace_space})
which is the direct limit of a unital system
\[
C ([0, 1]) \otimes M_{r (\nu (0))}
 \overset{\alpha_{1, 0}}{\longrightarrow}
         C ([0, 1]) \otimes M_{r (\nu (1))}
 \overset{\alpha_{2, 1}}{\longrightarrow}
         C ([0, 1]) \otimes M_{r (\nu (2))}
 \overset{\alpha_{3, 2}}{\longrightarrow} \cdots
\]
with injective diagonal maps
$\alpha_{n + 1, \, n}$
given by
\[
f \mapsto \diag \big( f \circ R_{n, 1}, \, f \circ R_{n, 2},
\, \ldots, \, f \circ R_{n, \, r (\nu_{n + 1}) / r (\nu_{n})} \big)
\]
for \ct{} functions
\[
R_{n, 1}, R_{n, 2}, \ldots, R_{n, \, r (\nu (n + 1)) / r (\nu (n))}
  \colon [0, 1] \to [0, 1] \, ,
\]
and an isomorphism
$\T (B_0) \to \T \bigl( {\widetilde{A}} \bigr)$.

Apply Lemma~\ref{L_9104_Subsystem}
with this choice of~$\nu$.
Define the sequences $(d (n) )_{n = 0, 1, 2, \ldots}$
and $(k (n) )_{n = 0, 1, 2, \ldots}$
as in Lemma~\ref{L_9104_Subsystem},
and then make all the definitions in 
Construction \ref{Cn_6918_General} and
\ref{Cn_6918_General_Part_1a}.
(Some are also given in the statement of Lemma~\ref{L_9104_Subsystem}.)
Then, as in the proof of Lemma~\ref{L_9104_Subsystem},
$X_n = {\widetilde{X}}_{\nu (n)}$.
We make the following choices for the unspecified
objects in these constructions.
We choose points $x_n \in X_n$ and $y_n \in [0, 1]$
for $n \in \Nz$
such that the conditions in
Construction
\ref{Cn_6918_General_Part_1a}(\ref{condition-points-dense})
and
Construction
\ref{Cn_6918_General_Part_1a}(\ref{Cn_6918_General_1a_yn})
are satisfied.
(It is easy to see that this can be done.)
Use these points in
Construction
\ref{Cn_6918_General_Part_1a}(\ref{Cn_6918_General_1a_Maps_XHigh})
and
Construction
\ref{Cn_6918_General_Part_1a}(\ref{Cn_6918_General_1a_Maps_YTop}).
Take the maps
\[
R_{n, 1}, \, R_{n, 2}, \, \ldots, \, R_{n, \, d (n + 1)}
\colon Y_{n + 1} \to Y_n
\]
in Construction
\ref{Cn_6918_General_Part_1a}(\ref{Cn_6918_General_1a_Maps_YBottom})
to be those from the application
of Proposition \ref{P_arbitrary_trace_space} above.
For $j = 1, \, 2, \, \ldots, \, l (n + 1)$,
let
$S_{n, j}^{(0)} |_{X_{n + 1}} \colon X_{n + 1} \to X_n$ be the
maps in the system obtained from
Lemma~\ref{L_9104_Subsystem},
and take $S_{n, j}^{(0)} |_{Y_{n + 1}} = R_{n, j}$.
The requirement
$S_{n, j}^{(0)} = S_{n, j}$ for $j = 1, 2, \ldots, d (n + 1)$ in
Construction
\ref{Cn_6918_General_Part_1a}(\ref{Cn_9Y01_Gen_1a_Maps2})
is then satisfied,
so that the condition in
Construction
\ref{Cn_6918_General}(\ref{Cn_6918_General_Agree})
is also satisfied.
Moreover, with these choices,
the conditions in
Construction
\ref{Cn_6921_GeneralPart2}(\ref{Cn_6918_General_Cross})
are satisfied.

By Lemma~\ref{L_9104_Subsystem},
the numbers $\kp$, $\om$, and~$\om'$
from Construction \ref{Cn_6918_General}(\ref{Cn_6918_General_Size})
and Construction \ref{Cn_6918_General}(\ref{Cn_6918_General_Size})
satisfy
\[
\kp = {\widetilde{\kp}},
\qquad
\om = {\widetilde{\om}},
\andeqn
\om' \leq {\widetilde{\om}}'.
\]
Therefore $\kp > \frac{1}{2}$,
$\om' < \om < \frac{1}{2}$,
and $2 \kp - 1 > 2 \om$,
as required in
Construction \ref{Cn_6918_General}(\ref{Cn_6918_General_Size}),
Construction \ref{Cn_6918_General}(\ref{Cn_6918_General_rrp}),
and Construction \ref{Cn_6918_General}(\ref{Cn_6918_General_kpom});
moreover
\begin{equation}\label{Eq_0724_omkpRel}
\frac{1}{1 - 2 \om} < \frac{2 \kappa - 1}{2 \om} \, .
\end{equation}

The algebra~$C$ is simple by Lemma~\ref{L_0725_Simplicity}.

The algebras $A$ and $B$ of
Lemma~\ref{L_6921_Conseq}(\ref{L_6921_Conseq_Separate})
are now $A = {\widetilde{A}}$ and $B = B_0$,
so that $C^{(0)}$,
as in Construction \ref{Cn_6918_General}(\ref{Cn_6918_General_2nd}),
is isomorphic to ${\widetilde{A}} \oplus B_0$.
The isomorphism
$\T (B_0) \to \T \bigl( {\widetilde{A}} \bigr)$
gives an isomorphism
$\zt^{(0)}_0 \colon \Aff (\T (A)) \to \Aff (\T (B))$.
This provides an automorphism
of $\Aff (\T (A)) \oplus \Aff (\T (B))$,
given by
\[
(f, g) \mapsto
   \bigl( \bigl(\zt^{(0)}_0 \bigr)^{-1} (g),
      \, \zt^{(0)}_0 (f) \bigr) \, .
\]
Let $\zt^{(0)}$ be the corresponding automorphism of
$\Aff (\T (A \oplus B)) = \Aff \bigl( \T \bigl( C^{(0)} \bigr) \bigr)$
gotten using Lemma~\ref{L_6921_DSumTraces}.
Clearly $\zt^{(0)} \circ \zt^{(0)}$ is the
identity map on $\Aff \bigl( \T \bigl( C^{(0)} \bigr) \bigr)$.

Adopt the notation
of Construction \ref{Cn_6921_GeneralPart2}:
$C$ and $C^{(0)}$ are as already described,
$D$ and $D^{(0)}$ are the AF~algebras from
Construction \ref{Cn_6921_GeneralPart2}(\ref{Cn_6918_General_AF}),
$\mu \colon D \to C$ and $\mu^{(0)} \colon D^{(0)} \to C^{(0)}$
are the maps of
Construction \ref{Cn_6921_GeneralPart2}(\ref{Cn_6918_General_Map})
(which are isomorphisms on K-theory by
Lemma \ref{L_6921_Conseq}(\ref{L_6921_Conseq_Kth})),
and $\te \in \Aut (D)$ and $\te^{(0)} \in \Aut \bigl( D^{(0)} \bigr)$
are as in
Construction \ref{Cn_6921_GeneralPart2}(\ref{Cn_6918_General_Flip}).

Define $E = \dirlim_n M_{r (m)}$,
with respect to the maps
$a \mapsto \diag (a, a, \ldots, a)$,
with $a$ repeated $l (n)$ times.
The direct system defining $D^{(0)}$
is the direct sum of two copies of the direct system just defined,
so
\[
D^{(0)} \cong E \oplus E
\andeqn
\Aff \bigl(\T  \bigl( D^{(0)} \bigr) \bigr)
 \cong \Aff (\T (E \oplus E)).
\]
Since $E$ is a UHF~algebra,
we have
$\Aff (\T (E)) \cong \R$ with the usual order and order unit~$1$.
Using $\id_{\Aff (\T (E))}$ in place of $\zt^{(0)}_0$ above,
we get an automorphism of $\Aff \bigl( \T \bigl( D^{(0)} \bigr) \bigr)$.
But this automorphism is just~${\widehat{\te^{(0)}}}$.

We claim that
$\zt^{(0)} \circ {\widehat{\mu^{(0)}}}
= {\widehat{\mu^{(0)}}} \circ {\widehat{\te^{(0)}}}$.
To prove the claim,
we work with
\[
\Aff (\T (E)) \oplus \Aff (\T (E))
\andeqn
\Aff (\T (A)) \oplus \Aff (\T (B))
\]
in place of $\Aff \bigl( \T \bigl( D^{(0)} \bigr) \bigr)$
and $\Aff \bigl( \T \bigl( C^{(0)} \bigr) \bigr)$,
but keep the same names for the maps.

Since $\mu^{(0)} \colon E \oplus E \to A \oplus B$
is the direct sum of unital maps from the first summand to~$A$
and the second summand to~$B$,
the map ${\widehat{\mu^{(0)}}}$ is similarly a direct sum
of maps $\Aff (\T (E)) \to \Aff (\T (A))$
and $\Aff (\T (E)) \to \Aff (\T (B))$.
Let $e$ and $f$
be the order units of $\Aff (\T (A))$ and $\Aff (\T (B))$.
The unique positive order unit preserving maps
$\Aff (\T (E)) \to \Aff (\T (A))$
and $\Aff (\T (E)) \to \Aff (\T (B))$
are $\af \mapsto \af e$ and $\bt \mapsto \bt f$
for $\af, \bt \in \R$.
Therefore ${\widehat{\mu^{(0)}}} (\af, \bt) = (\af e, \bt f)$.
Since $\zt_0^{(0)}$ is order unit preserving,
we have $\zt_0^{(0)} (e) = f$,
so
\[
\zt^{(0)} (\af e, \bt f)
= (\bt e, \af f)
= {\widehat{\mu^{(0)}}} (\bt, \af)
= \bigl( {\widehat{\mu^{(0)}}} \circ {\widehat{\te^{(0)}}} \bigr)
    (\af, \bt) \, .
\]
The claim follows.

Using conditions (\ref{Cn_6918_General_rrp})
and~(\ref{Cn_6918_General_Agree})
in Construction~\ref{Cn_6918_General},
Lemma~\ref{L_6919_IfAgreeTodn},
and Proposition~\ref{P_6917_CStLim},
we get isomorphisms
\[
\rh \colon
 \Aff \bigl( \T \bigl( D^{(0)} \bigr) \bigr) \to \Aff (\T (D))
\andeqn
\sm \colon
 \Aff \bigl( \T \bigl( C^{(0)} \bigr) \bigr) \to \Aff (\T (C))
\]
such that
${\widehat{\mu}} \circ \rh = \sm \circ {\widehat{\mu^{(0)}}}$.
Define
\[
\et = \rh \circ {\widehat{\te^{(0)}}} \circ \rh^{-1}
  \in \Aut \big( \Aff (\T (D)) \big)
\quad {\mbox{and}} \quad
\zt = \sm \circ \zt^{(0)} \circ \sm^{-1}
  \in \Aut \big( \Aff (\T (C)) \big) \, .
\]
A calculation now shows that the claim above implies
\begin{equation}\label{Eq_6922_PreCDiag}
\zt \circ {\widehat{\mu}} = {\widehat{\mu}} \circ \et.
\end{equation}
We also have $\zt \circ \zt = \id_{\Aff (\T (C))}$.

We want to apply Proposition~\ref{P_6917_CStLim}
with $D_n$ and $\ph_{n, m}$ as in
Construction \ref{Cn_6921_GeneralPart2}(\ref{Cn_6918_General_AF}),
and $\ph_{n, m}^{(0)}$ as there in place of $\ph_{n, m}'$,
so that $D$ and $D^{(0)}$ are as already given,
with $C_n = D_n$ for all $n \in \Nz$ 
and $\ps_{n, m} = \ph_{n, m}$ and $\ps_{n, m}' = \ph_{n, m}$
for all $m$ and~$n$,
and with $\te_n$, $\te_n^{(0)}$, $\te$, and $\te^{(0)}$ 
from
Construction \ref{Cn_6921_GeneralPart2}(\ref{Cn_6918_General_Flip})
in place of $\mu_n$, $\mu_n'$, $\mu$, and $\mu'$.
As before, this application is justified by
conditions (\ref{Cn_6918_General_rrp})
and~(\ref{Cn_6918_General_Agree})
in Construction~\ref{Cn_6918_General},
and Lemma~\ref{L_6919_IfAgreeTodn}.
The outcome is an isomorphism
$\rh' \colon
 \Aff \bigl( \T \bigl( D^{(0)} \bigr) \bigr) \to \Aff (\T (D))$
such that
\begin{equation}\label{Eq_0724_rhPrime}
{\widehat{\te}} = \rh' \circ {\widehat{\te^{(0)}}} \circ (\rh')^{-1}.
\end{equation}

We claim that $\et = {\widehat{\te}}$.
The ``right'' way to do this is presumably to show that
$\rh' = \rh$ above,
but the following argument is easier to write.
We have
\[
\Aff (\T (D))
 \cong \Aff \bigl( \T \bigl( D^{(0)} \bigr) \bigr)
 \cong \R^2,
\]
with order $(\af, \bt) \geq 0$ \ifo{} $\af \geq 0$ and $\bt \geq 0$
and order unit $(1, 1)$.
Since the state space $S (\R^2)$
of $\R^2$ with this order unit space structure
is an interval,
and automorphisms of order unit spaces
preserve the extreme points of the state space,
there is only one possible action of a nontrivial automorphism
of $\R^2$ on $S (\R^2)$.
Theorem~\ref{T_7701_AffKToK} implies that
$\R^2 \cong \Aff (S (\R^2))$,
so there is only one
nontrivial automorphism of $\R^2$.
Since ${\widehat{\te^{(0)}}}$ is nontrivial,
so is ${\widehat{\te}}$ by~(\ref{Eq_0724_rhPrime}),
and so is $\et$ by its definition.
The claim follows.

The claim and~(\ref{Eq_6922_PreCDiag}) imply
\begin{equation}\label{Eq_6922_CDiag}
\zt \circ {\widehat{\mu}} = {\widehat{\mu}} \circ {\widehat{\te}}.
\end{equation}

Passing to state spaces and applying Theorem~\ref{T_7701_AffKToK},
we get an affine homeomorphism
$H \colon \T (C) \to \T (C)$
such that $\zt (f) = f \circ H$ for all $f \in \Aff (\T (C))$,
and moreover $H \circ H = \id_{\T (C)}$.
By Lemma~\ref{L_6921_Conseq}(\ref{L_6921_Conseq_Kth}),
the expression $\mu_* \circ \te_* \circ (\mu_*)^{-1}$
is a well defined automorphism of $K_* (C)$, of order~$2$.
We claim that
$F = \big( \mu_* \circ \te_* \circ (\mu_*)^{-1}, \, H)$
is an order~$2$ automorphism of $\Ell (C)$.
We use the notation of Definition~\ref{D_6921_AbstractEllInv}
for the Elliott invariant of a \ca;
in particular, $\rh_C$ and $\rh_D$ are not related to the maps $\rh$
and $\rh'$ above.
The only part needing work is the
compatibility condition~(\ref{Eq_6922_Compat})
in Definition~\ref{D_6921_AbstractEllInv},
which amounts to showing that
\[
\rh_{C} \circ \mu_* \circ \te_* \circ (\mu_*)^{-1}
= \zt \circ \rh_{C} \, .
\]
To see this,
we calculate,
using
at the second and last steps
the notation of Definition~\ref{N_6919_Traces}
and the fact that the morphisms of Elliott invariants
defined by $\mu$ and $\te$ satisfy~(\ref{Eq_6922_Compat})
in Definition~\ref{D_6921_AbstractEllInv},
and using (\ref{Eq_6922_CDiag}) at the third step,
\begin{align*}
\zt \circ \rh_{C}
& = \zt \circ \rh_{C} \circ \mu_* \circ (\mu_*)^{-1}
= \zt \circ {\widehat{\mu}} \circ \rh_{D} \circ (\mu_*)^{-1}
\\
& = {\widehat{\mu}} \circ {\widehat{\te}}
\circ \rh_{D} \circ (\mu_*)^{-1}
= \rh_{C} \circ \mu_* \circ \te_* \circ (\mu_*)^{-1} \, ,
\end{align*}
as desired.

Thus, we have constructed an automorphism $F$ of $\Ell (C)$
of order~$2$.
It remains to show that $F$ is not induced by any automorphism of~$C$.

Using~(\ref{Eq_6922_CDiag}) on the last components,
one easily sees that $F \circ \mu_* = \mu_* \circ \te_*$.
Let $q$ and $q^{\perp}$ be as in Notation \ref{notation-q-q-perp}.
In the construction of $D$
as in Construction \ref{Cn_6921_GeneralPart2}(\ref{Cn_6918_General_AF}),
set $e = \varphi_{\infty,1}((1,0))$
and $e^{\perp} = 1 - e = \varphi_{\infty,1}((0,1))$.
Then $\te (e) = e^{\perp}$, $\mu (e) = q$,
and $\mu(e^{\perp}) = q^{\perp}$.
Therefore $F ([q]) = [q^{\perp}]$.

Suppose now that there exists an automorphism $\alpha$ such
that $\alpha_* = F$. Then $[\alpha (q)] = [q^{\perp}]$.
By Lemma \ref{lemma-stable-rank-1},
$\alpha (q)$ is unitarily equivalent to $q^{\perp}$.
Let $u$ be a unitary such that $u\alpha (q)u^* = q^{\perp}$.
Thus, since
$\alpha(q A q) = \alpha (q)A\alpha (q) = u^* q^{\perp} A q^{\perp} u$,
it follows that the $q A q$ and $q^{\perp} A q^{\perp}$ have the
same radius of comparison.
By~(\ref{Eq_0724_omkpRel}),
this contradicts
Lemmas \ref{Lemma:rc_lower_bound} and~\ref{Lemma:rc-upper-bound}.
\end{proof}

\begin{rmk}\label{R_0727_DontNeedKThy}
One can easily check that,
with $C$ as in the proof of Theorem~\ref{T_6922_FlipOfEll},
there is a unique automorphism of $\Ell (C)$
whose component automorphism
of the tracial state space is as in the proof.
Therefore the conclusion can be slightly strengthened:
there is an automorphism of $\T (C)$
which is compatible with an automorphism of $\Ell (C)$
but which is not induced by any automorphism of~$C$.
\end{rmk}

\begin{qst}\label{Q_7816_DynEx}
Does there exist a compact metric space~$X$
and a \mh{} $h \colon X \to X$
such that the crossed product $C^* (\Z, X, h)$
has the same features as the example we construct here?
\end{qst}

\begin{qst}[Blackadar]\label{Q_7910_NotOrder2}
Does there exist a simple separable
stably finite unital nuclear \ca~$C$
and an automorphism $F$ of $\Ell (C)$
such that:
\begin{enumerate}
\item\label{Item_Q_7910_NotOrder2_FF}
$F \circ F$ is the identity morphism of $\Ell (C)$.
\item\label{Item_Q_7910_NotOrder2_alpha}
There is an automorphism $\alpha$ of $C$ such that
$\alpha_* = F$.
\item\label{Item_Q_7910_NotOrder2_noalpha2}
There is no $\af$ as in~(\ref{Item_Q_7910_NotOrder2_alpha})
which in addition
satisfies $\af \circ \af = \id_C$.
\end{enumerate}
Can such an algebra be chosen to be AH and have stable rank~$1$?
\end{qst}

Our method of proof suggests that, instead of being just a number,
the radius of comparison should be taken to be
a function from $V (A)$ to $[0, \I]$.
If one uses the generalization to nonunital algebras
in \cite[Section 3.3]{BRTTW},
one could presumably even get a function
from ${\operatorname{Cu}} (A)$ to $[0, \I]$.

\bibliographystyle{alpha}

\begin{thebibliography}{BRT{\etalchar{+}}12}

\bibitem[Alf71]{Alf}
Erik~M. Alfsen.
\newblock {\em Compact convex sets and boundary integrals}.
\newblock Springer-Verlag, New York-Heidelberg, 1971.
\newblock Ergebnisse der Mathematik und ihrer Grenzgebiete, Band 57.

\bibitem[Bla98]{blackadar-k-theory}
Bruce Blackadar.
\newblock {\em {$K$}-theory for operator algebras}, volume~5 of {\em
  Mathematical Sciences Research Institute Publications}.
\newblock Cambridge University Press, Cambridge, second edition, 1998.

\bibitem[BRT{\etalchar{+}}12]{BRTTW}
Bruce Blackadar, Leonel Robert, Aaron~P. Tikuisis, Andrew~S. Toms,
 and Wilhelm  Winter.
\newblock An algebraic approach to the radius of comparison.
\newblock {\em Trans. Amer. Math. Soc.}, 364(7):3657--3674, 2012.

\bibitem[EGLN15]{elliott-gong-lin-niu}
George~A. Elliott, Guihua Gong, Huaxin Lin, and Zhuang Niu.
\newblock
On the classification of simple amenable {$C^*$}-algebras with finite
  decomposition rank, {II}.
\newblock preprint, arXiv:1507.03437, 2015.

\bibitem[EHT09]{elliott-ho-toms}
George~A. Elliott, Toan~M. Ho, and Andrew~S. Toms.
\newblock A class of simple {$C^*$}-algebras with stable rank one.
\newblock {\em J. Funct. Anal.}, 256(2):307--322, 2009.

\bibitem[Goo86]{Gdr0}
K.~R. Goodearl.
\newblock
{\em Partially ordered abelian groups with interpolation}, volume~20
  of {\em Mathematical Surveys and Monographs}.
\newblock American Mathematical Society, Providence, RI, 1986.

\bibitem[Haa14]{haagerup-quasitraceas}
Uffe Haagerup.
\newblock Quasitraces on exact {$C^*$}-algebras are traces.
\newblock
{\em C. R. Math. Acad. Sci. Soc. R. Can.}, 36(2-3):67--92, 2014.

\bibitem[MS74]{milnor-stasheff}
John~W. Milnor and James~D. Stasheff.
\newblock {\em Characteristic classes}.
\newblock
Princeton University Press, Princeton, N. J.; University of Tokyo
  Press, Tokyo, 1974.
\newblock Annals of Mathematics Studies, No. 76.

\bibitem[Niu14]{niu-mean-dimension}
Zhuang Niu.
\newblock Mean dimension and {AH}-algebras with diagonal maps.
\newblock {\em J. Funct. Anal.}, 266(8):4938--4994, 2014.

\bibitem[Phi14]{Phl40}
N.~C.\  Phillips,
{\emph{Large subalgebras}},
preprint (arXiv: 1408.5546v1 [math.OA]).

\bibitem[R{\o}r92]{rordam-tensored-by-uhf-ii}
Mikael R{\o}rdam.
\newblock On the structure of simple {$C^*$}-algebras tensored with a
  {UHF}-algebra. {II}.
\newblock {\em J. Funct. Anal.}, 107(2):255--269, 1992.

\bibitem[R{\o}r03]{rordam-counterexample}
Mikael R{\o}rdam.
\newblock
A simple {$C^*$}-algebra with a finite and an infinite projection.
\newblock {\em Acta Math.}, 191(1):109--142, 2003.

\bibitem[Tho94]{thomsen}
K.~Thomsen.
\newblock
Inductive limits of interval algebras: the tracial state space.
\newblock {\em Amer. J. Math.}, 116(3):605--620, 1994.

\bibitem[Tom06]{Toms-flat-dim-growth}
Andrew~S. Toms.
\newblock Flat dimension growth for {$C^*$}-algebras.
\newblock {\em J. Funct. Anal.}, 238(2):678--708, 2006.

\bibitem[Tom08]{toms-counterexample}
Andrew~S. Toms.
\newblock On the classification problem for nuclear {$C^\ast$}-algebras.
\newblock {\em Ann. of Math. (2)}, 167(3):1029--1044, 2008.

\bibitem[TWW17]{tikuisis-white-winter}
Aaron Tikuisis, Stuart White, and Wilhelm Winter.
\newblock Quasidiagonality of nuclear {$C^\ast$}-algebras.
\newblock {\em Ann. of Math. (2)}, 185(1):229--284, 2017.

\bibitem[Vil98]{Villadsen-perforation}
Jesper Villadsen.
\newblock Simple {$C^*$}-algebras with perforation.
\newblock {\em J. Funct. Anal.}, 154(1):110--116, 1998.

\end{thebibliography}

\end{document}